\newtheorem{theorem}{Theorem}[section]
\newtheorem{lemma}{Lemma}[section]
\newtheorem{proposition}{Proposition}[section]
\theoremstyle{definition}
\newtheorem{definition}{Definition}[section]
\theoremstyle{remark}
\newtheorem{remark}{Remark}[section]
\numberwithin{equation}{section}
\newlength{\myfboxsep}
\newlength{\mywidth}
\DeclareMathOperator{\E}{E}
\DeclareMathOperator{\lito}{o}
\DeclareMathOperator{\tr}{tr}
\DeclareMathOperator{\Tr}{Tr}
\newcommand{\comment}[1]{}
\begin{document}
\title[Joint convergence]{Joint convergence of several copies of different patterned random matrices}
\author[R. Basu]{Riddhipratim Basu}
\thanks{R. Basu's research is supported by Lo\`eve Fellowship, Department of Statistics, University of California, Berkeley.}
\address{Department of Statistics
 University of California, Berkeley}
\email{riddhipratim@stat.berkeley.edu}
\author[A.Bose]{Arup Bose}
\thanks{ A. Bose's research is
supported by J.C.Bose National Fellowship, Dept. of Science and Technology, Govt. of India.}
\address{Statistics and Mathematics Unit\\ Indian
Statistical Institute\\ 203 B.T.~Road\\ Kolkata 700108\\ India}
\email{bosearu@gmail.com}
\author[S. Ganguly]{Shirshendu Ganguly}
\address{Department of Mathematics, University of Washington,
Seattle}
\email{sganguly@math.washington.edu}
\author[R. S. Hazra]{Rajat Subhra Hazra}
\address{Institut f\"ur Mathematik\\
Universit\"at  Z\"urich\\
Winterthurerstrasse 190\\
CH-8057, Z\"urich
} \email{rajatmaths@gmail.com}

\keywords{Random matrices, free probability, joint convergence, patterned matrices, Toeplitz matrix, Hankel matrix, Reverse Circulant matrix, Symmetric Circulant matrix, Wigner matrix}
\subjclass[2010]{Primary 60B20; Secondary 60B10, 46L53, 46L54}
\begin{abstract}
We study the joint convergence of independent copies of several patterned matrices in the non-commutative probability setup. In particular, joint convergence holds for the well known Wigner, Toeplitz, Hankel, Reverse Circulant and Symmetric Circulant matrices. We also study some properties of the limits. In particular, we show that copies of Wigner becomes asymptotically free with copies of any of the above other matrices.
\end{abstract}
\maketitle

\section{Introduction}\label{sec:intro}
A \textit{non-commutative probability space} is a pair $(\mathcal A, \varphi)$, where $\mathcal A$ is a unital algebra over
 $\mathbb C$ and $\varphi:\mathcal A \to \mathbb C$ is a linear functional such that $\varphi(1) = 1$;
$\varphi$ is
 a \textit{state} if for $a\geq0$ we have $\varphi(a)\geq 0$ and it is
 \textit{tracial} if $\varphi(ab)=\varphi(ba)$ for all $a,b$.
 Elements of $\mathcal A$
 will be called \textit{variables}.

The connection between
large dimensional random matrices (matrices whose elements are random variables) and
non-commutative probability spaces is well known and deep.
Let $(X,\mathcal{B},\mu)$ be a probability
space. Let $L(\mu):=\displaystyle{\bigcap_{p\geq 1}}L^p(X,\mu)$ be
the algebra of random variables with finite moments of all orders.
Set
\begin{equation}\label{eq:matrixalgebra}\mathcal{A}_n:=Mat_n(L(\mu))\end{equation}
as the space of $n\times n$ complex random matrices with
entries coming from $L(\mu)$.
Then $(\mathcal{A}_n, \varphi_j)$, $j=1, 2$ are non-commutative probability spaces where
\begin{equation}\label{varphi}\varphi_1(A)=\frac{1}{n}\Tr(A)\text{ and }\varphi_2(A)=\frac{1}{n}\E[\Tr(A)].\end{equation}
The \textit{joint distribution} of a family $(a_i)_{i \in I}$ of
variables in
$(\mathcal
A,\varphi)$ is the collection of \textit{joint moments} $\{\varphi(a_{i_1}\cdots a_{i_k})\}, \ k \in \mathbb N\ \ \text{and}\ \ i_1,\cdots,i_k \in
 I.$
Let $(\mathcal{A}_n,\varphi_n)_{n\geq 1}$ and
$(\mathcal{A},\varphi)$ be non-commutative probability spaces and let
$(a_{i,n}; i\in I)\subset \mathcal{A}_n$ for each $n$, $(a_{i}; i\in
I)\subset \mathcal{A}$. Then  $(a_{i,n}; i\in I)$ \textit{converges
in
distribution} to $(a_{i}; i\in I)$ if all joint moments converge. Equivalently, for all $ p\in
\mathbb{C}[X_i,i\in I] $,
\begin{equation}\label{eq:cid}
\lim_n\varphi_n(p(\{a_{i,n}\}_{i\in I}))= \varphi(p(\{a_{i}\}_{i\in I})).
\end{equation}
Convergence
of  an $n \times n$  real symmetric matrix $A_n$
with respect to $\varphi_1$ and  $\varphi_2$ demands convergence  for each non-negative integer $k$, respectively of $\varphi_1(A_n^k)$ (almost surely) and
$\varphi_2(A_n^k)$.

A related notion of convergence
is that of the spectral distribution.
If the eigenvalues of $A_n$ are $\{\lambda_i\}$, then the spectral measure of $A_n$ is defined as
\begin{equation}\label{def:esd}
L_n=\frac1n\sum_{i=1}^n\delta_{\lambda_i}.\end{equation}
If as $n \to \infty$, $L_n$ converges weakly
(almost surely) to a measure $\mu$ with distribution function $F$ say, then $F$ (or $\mu$) is called the \textit{limiting spectral distribution} (LSD) of
$\{A_n\}$.

In his
pioneering work,
\citet{wigner1958} showed
that
the GUE (Gaussian Unitary Ensemble, Hermitian matrices with
i.i.d.\ complex Gaussian entries with variance $1/n$)
converges with respect to $\varphi_2$
to the \textit{semi-circular} variable $s$ characterized by the limit moments
$$\varphi(s^k)=\int t^k\frac1{2\pi}\sqrt{4-t^2}\boldsymbol{1}_{|t|\le 2}dt.$$
The probability law with density $\displaystyle{\sqrt{4-t^2}\boldsymbol{1}_{|t|\le 2}}$ having the above moments is called
the \textit{semi-circle law}.
 This result was extended in many directions
 for Gaussian Orthogonal Ensemble (GOE) and Gaussian Symplectic Ensemble (GSE) and in fact for i.i.d.\ entries with finite second moment. See \citet{Baisilverstein} for detailed treatment.


 \citet{voiculescu1986addition} introduced
the notion of freeness in the context of free groups. It
 played the role of independence in non-commutative
probability spaces.
Unital subalgebras $\{\mathcal{A}_i\}_{i\in I}\subset\mathcal{A}$ are said to be
{\it free} if $\varphi(a_1\cdots a_n)=0$ whenever $\varphi(a_j)=0,\  a_j\in \mathcal{A}_{i_j}$ and $i_j\neq i_{j+1}$ for all $j$.
%

The  notions of freeness and of convergence as in \eqref{eq:cid} together yield an obvious and  natural notion of \textit{asymptotically free}.
 \citet{voi} showed that if we take $k$ independent Hermitian
random matrices $\{W_{i,n}\}_{1\leq i\le k}$ distributed as GUE  then they are asymptotically free. 
In other words, for any polynomial $\mathbb P$ in $k$ variables,
\begin{equation*}
 \E\left[\frac1n \Tr (\mathbb P(W_{1,n}, \ldots, W_{k,n}))\right]\to \tau(\mathbb P(s_1, \ldots, s_k)) \text{ as } n\to\infty,
\end{equation*}
where $(s_1,\ldots, s_k)$ is a collection of free (and semi-circular)  variables in some non-commutative probability space $(A,\tau)$.
 Asymptotic freeness  of GUE has been a key feature in the development of free probability and its various applications.
\citet{voi} also showed the asymptotic freeness of
GUE and diagonal constant matrices.
 Later,  \citet{voiculescu:1998} improved the result to asymptotic freeness of
 GUE and general $n \times n$ deterministic matrices $\{D_{i,n}\}$ (having LSD) and satisfying
\begin{equation}\label{eq:cond:free}
 \sup_n\|D_{i,n}\|<\infty \text{ for each $i$,}
\end{equation}
where $\|\cdot\|$ denotes the operator norm.
This
inclusion of constant
matrices
had important implications in the factor theory of von Neumann algebras.
\citet{dykema:1993}
established
a similar result
for a family of independent  Wigner matrices (symmetric matrix with i.i.d.\ real entries with uniformly bounded moments) and block-diagonal constant matrices with bounded block size. The results were also shown to
hold
with respect to $\varphi_1$
almost surely
(see \citet{hiai:petz:2000, hiai:petz:book} for details). For general results on
freeness between Wigner and deterministic matrices we refer to \citet{anderson:guionnet:zeitouni}.
Various other extensions to Wishart ensembles, GOE, GSE are also available.
See \citet{voiculescu:1998, Capitaine:Casalis:2004,Capitaine:Martin:2007, collins:guionnet:segala:2009,Schultz:2005, ryan:1998}.

Freeness is present  elsewhere too
and
one
important place
is
the Haar distributed matrices.
It is well known that any unitary invariant matrix (in particular GUE) can be written as $UDU^*$ where
$D$ is a diagonal matrix and $U$ is Haar distributed on the space of unitary matrices
and independent of $D$.
\citet{voi} showed that
$\{U,U^*\}$ and $D$ are asymptotically  free.
\citet{hiai:petz:2000} showed that the Haar unitaries and general deterministic matrices satisfying~\ref{eq:cond:free} are almost surely asymptotically free.
\citet{Collins} showed that general deterministic matrices and Haar measure on unitary group are asymptotically free
almost surely provided the deterministic matrices jointly converge. The case for orthogonal and symplectic groups were dealt with in \citet{collins:sniady:2006}.

One of the important applications of these in random matrix theory was the study of the spectrum
of  $W_n+P_n$ where $W_n$ is a Wigner matrix and $P_n$ is another suitable matrix. The spectrum of this perturbation
has been of interest for a long time (see \citet{fulton}). Suppose the spectral
measure of $P_n$ weakly converges to $\mu_P$. Then
the spectral measure of $W_n+P_n$
converges weakly and almost surely and in expectation to the
\textit{free convolution} of $\mu_P$ and the semicircular law of whenever
  $\mu_P$ has compact support or $P_n$ satisfy
  \ref{eq:cond:free}.
These results were derived using asymptotic freeness results between
deterministic (or random) matrices and Wigner matrix. \citet{pastur:vasilchuk} extended these results for unbounded perturbations (possibly random) using analytic machinery of Stieltjes transform. It is to be noted that this  result on the  sum
does not yield asymptotic freeness
between the
matrices.

The special case where $P_n$ has finite rank has  received considerable amount of
interest recently.
In this case, the limit measure is still the semi-circular law but the
behavior at the edge has some interesting properties.
See \citet{peche:2007,CDF:2009, BGM:2011, CDF:2011, peche}.

One relevant question is whether this asymptotic freeness persists
for some
other types of matrices. Consider the
class of \textit{patterned matrices}.
These are matrices where, along with symmetry, some other assumptions are imposed on the structure.
 Important examples
 are the  Toeplitz,
Hankel, Symmetric Circulant and Reverse Circulant. The spectrum of these matrices were studied in \citet{bry, bosesen, hammond:miller}.
Generally speaking
the Stieltjes transform does not seem to be a convenient tool to study these matrices due to the strong dependence among the rows and columns.
\citet{bosehazrasaha} showed that under suitable assumptions on the pattern,
there is joint convergence of i.i.d.\ copies of a \textit{single} pattern matrix as dimension goes to infinity. One
important
consequence is that in the limit other kinds  of non-free independence may arise. In particular,
Symmetric Circulants
are commutative
and Reverse Circulants are
asymptotically half independent.
As yet, no description of independence is available for the Toeplitz and Hankel matrices.

As a more general goal,
we investigate the  joint convergence of multiple independent copies of these matrices, including the Wigner.
Inter alia,
 we address the asymptotic freeness of the Wigner matrices and patterned matrices.

In Theorem~\ref{main theorem-1}, we provide
sufficient conditions for
joint convergence holds.
We deal with only real symmetric matrices as the structure of many of these matrices change if one takes  complex entries.
One of the basic \textit{necessary} assumptions  on the pattern matrices is \textit{Property B}, which states that the maximum number of times any entry is repeated in a row remains uniformly bounded across all rows as $n\to\infty$. All the above five matrices 
 satisfy
 Property B. Under Property B and some moment assumptions on the entries we show that if a criteria (Condition~\ref{psi}) holds for one copy each of any subcollection of matrices, then the joint convergence holds for multiple copies. This Condition~\ref{psi} is
 satisfied by all the five matrices.
  We use the method of moments and the so called volume method to prove these results.
  See~\citet{bosesen, bry} for the use of volume method for convergence of spectral measure of patterned matrices.
 As an application of
 Theorem~\ref{main theorem-1}, the following holds: if $\mathbb P$ is a symmetric polynomial in any of the two following scaled matrices: Wigner, Toeplitz, Hankel, Reverse Circulant and Symmetric Circulant with uniformly bounded entries then the spectral measure $L_n$ of the matrix $\mathbb P$ converges to a non-random measure $\mu$  on $\mathbb R$ weakly almost surely.


  In Theorem \ref{theorem:free1}, we show that any collection of Wigner matrices is free of the other four matrices. As already discussed,
 Wigner and deterministic matrices are asymptotically free.
 By the results of
 \citet{Collins} and \citet{collins:sniady:2006} the results are true for general deterministic matrices which converge jointly. To the best of our knowledge these results directly do not imply
 the freeness result Theorem~\ref{theorem:free1}.
  This is because, the existing results
 need some conditions on the behavior of the trace of the matrices as pointed out in Remark 3.6 of \citet{Collins}. The condition in \citet{Collins} (equation (3.4) therein) was studied in \citet{Capitaine:Casalis:2004}. It was shown that under the technical condition on the random matrices (see Condition $C$ and $C'$ in \citet{Capitaine:Casalis:2004}) there is asymptotic freeness
  between Wigner and other random matrices. Although the Theorems of \citet{Capitaine:Casalis:2004} are for GUE,
  it is expected that the results would be true for real entries or GOE.   In other available criteria for freeness, condition  (\ref{eq:cond:free}) appears
(see \citet{anderson:guionnet:zeitouni} and Theorem 22.2.4 of \citet{speicher:2011}). This is
not applicable in our situation as it is known from the works of \citet{bosesen, bry} that the spectral norm of Toeplitz, Hankel, Reverse Circulant and  Symmetric Circulant are unbounded.

 Instead of attempting to check/modify  the technical sufficient condition  of \citet{Capitaine:Casalis:2004}  we
extend
the volume method to derive Theorem~\ref{theorem:free1}.
This technique
is
similar in spirit to those in Chapter 22 of \citet{spei}. However, we bypass  the detailed  properties of permutation group and Weingarten functions.
 It is quite feasible that the techniques of \citet{Collins} and \citet{Capitaine:Casalis:2004} may be extended to
 prove Theorem~\ref{theorem:free1}.
 Incidentally,
 if we take the Wigner with complex entries then
 Theorem~\ref{theorem:free1} holds for any patterned matrix satisfying Property B and having an LSD.

The use of random matrix theory and free probability in CDMA (Code Division Multiple Access) and MIMO
(multiple input and multiple
output)  systems was shown in many articles.
See \citet{oraby2, Oraby, tulino:verdu, jack:mimo}.
For a MIMO system with $n_1$ transmitter antenna and $n_2$ receiver antenna, the
received signal
is
represented in terms of equation $\mathbb Y_n= \mathbb H \mathbb A_n+\mathbb
B_n$ where
$\mathbb A_n$ is an $n_1$- dimensional vector depending on $n$ and $\mathbb B_n$
is a noise signal and $\mathbb H$
is the channel matrix which generally has a block structure as below and
$\mathbb Y_n$ is an $n_2$ dimensional vector.
$$ \mathbb H= \left[ \begin{array}{ccccccccc}
		C_1 & C_2 & \hdots & C_L & \mathbf 0 & \hdots & &\hdots &
\mathbf 0 \\
		\mathbf 0 & C_1 & C_2 & \hdots & C_L & \mathbf 0 & & & \vdots\\
		\vdots & \mathbf 0 & C_1 & C_2 & \hdots & C_L & \mathbf 0 && \\
		 &  & \ddots & \ddots & \ddots & & \ddots &  \ddots &\vdots\\
		\vdots &	  & & \ddots & \ddots & \ddots & &\ddots & 	
\mathbf 0									
\\
		\mathbf 0 &  \hdots &  & \hdots &\mathbf 0 & C_1 & C_2 &\hdots
& C_L \end{array} \right].$$
 One of the main issues in the study
 of a MIMO system is
 the eigenvalue distribution
of $\mathbb H\mathbb H^*$ since this is linked to
the capacity of the channel.
Here $\{C_i\}$ can be Wigner matrices or more general matrices. It may also
happen that some of the blocks are
Toeplitz or Hankel or any other structured matrices.  Studying the  spectral
properties of such matrices boils down to studying
the joint convergence of different patterned matrices. The results of this article can be used for studying such systems. We refer
the readers
to the recent article by \citet{male} which applies similar results for MIMO system.

Finally we
point out that we could not obtain full characterization
 of the joint limits if one of the matrices is not Wigner.
 It is known that in a complex unital algebra only two notions of independence of subalgebras may arise:
 freeness and classical independence (see \citet{speicher1997}).  Although Reverse Circulant limit shows half independence,
this notion
is only for variables
of an algebra and not for subalgebras
(see \citet{bhs:half}).
 For other matrices like Toeplitz and Hankel nothing is known yet about the joint convergence.

In
Section~\ref{sec:def} we
recall definitions of pattern matrices and express the trace in terms of circuits and words (equivalently pair-partitions). In Section~\ref{main results} we state our main results on
joint convergence of patterned matrices including those mentioned earlier as well as
Theorem~\ref{theorem:catalan} on the contribution of  certain monomials depending on the structure of the matrices.
We also discuss the properties of the sum of two random matrices in the limit. The final Section~\ref{sec:proof} is dedicated to the proofs.

\section{Some basic definitions and notation}\label{sec:def}
\subsection{Patterned matrices, link function, trace formula and words}
Patterned matrices are defined via the \textit{link functions}.  A link function $L$ is defined as a
function $L:\{1,2,..,n\}^2 \rightarrow Z_{\ge}^d, n \ge 1$. For our purposes $d=1$ or $2$. Although $L$ depends on $n$, to avoid complexity of notation we suppress
the $n$ and consider $\mathbb N^2$ as the common domain. We also assume that $L$ is symmetric in its arguments, that is, $L(i,j)=L(j,i)$.

Let $\{x(i)\}$ and $\{x({i,j})\}$ be a sequence of real random
variables, referred to as the \textit{input sequence}. The sequence
of matrices $\{A_n\}$ under consideration  will be defined by
$$A_n \equiv ((a_{i,j}))_{1\leq i, j,\leq n} \equiv ((x{(L(i,j))})).$$ Some important matrices we shall discuss in this article are:
\begin{itemize}
\item[($W_n$)] Wigner matrix: $ L:\mathbb{N}^2 \rightarrow \mathbb Z^2$ where
$L(i,j) = (\text{min}(i,j) , \text{max}(i,j)).$
\vskip5pt
\item[($T_n$)] Toeplitz matrix: $ L:\mathbb{N}^2 \rightarrow \mathbb Z $ where
$L(i,j) =|i-j|.$
\vskip5pt
\item[($H_n$)] Hankel matrix:  $L:\mathbb{N}^2 \rightarrow \mathbb Z$ where
$L(i,j) =i+j.$
\vskip5pt
\item[($RC_n$)] Reverse Circulant: $L:\mathbb{N}^2 \rightarrow \mathbb Z $ where
$L(i,j) =(i+j )\mod n.$
\vskip5pt
\item[($SC_n$)] Symmetric Circulant: $L:\mathbb{N}^2 \rightarrow \mathbb Z $ where
$L(i,j) =n/2-|n/2-|i-j||.$
\end{itemize}
It is now well known that the limiting spectral
distribution (LSD) of the above matrices exists.
\citet{bosehazrasaha} reviewed the results on LSD of the above
matrices. For various results on Wigner matrices we refer to the
excellent exposition by \citet{anderson:guionnet:zeitouni}.

The $L$ function for all the five matrices defined above satisfy the
following  property. This property was introduced by \citet{bosesen}
and shall be crucial to us.
(For any set $S$, $\#S$ or $|S|$ will denote the number of elements in $S$).\vskip5pt

\textit{Property B}: We say a link function $L$ satisfies \textit{Property B} if,
\begin{equation}\label{prop:B}
\Delta(L) = \sup_n \sup_{ t \in \mathbb Z^d_{\ge} } \sup_{1 \le k \le n} \# \{
l: 1 \le l \le n, L(k,l) = t\} < \infty.
\end{equation}
In particular, $\Delta(L)=2$ for $T_n,SC_n$ and $\Delta(L)=1$ for $W_n,H_n$ and $RC_n$.
\vskip5pt

Consider $h$ different type of patterned matrices where type $j$
has $p_j$ independent copies, $1 \leq j \leq h$.  The different link functions shall
be referred to as \textit{colors} and different independent copies
of the matrices of any given color shall be referred to as
\textit{indices}. Let $\{X_{i,n}^j, 1\leq i\leq p_j\}$
$n\times n$ symmetric patterned matrices with link
functions $L_j$, $j=1,2,\cdots,h$. Let
$X_i^j(L_j(p,q))$
denote the $(p,q)$-th
entry of $X_{i,n}^j$.
We suppress the
dependence on $n$ to simplify notation.
Two
natural assumptions on the link function and the input sequence are:\vskip5pt

\let\myenumi\theenumi
\renewcommand{\theenumi}{A\myenumi}
\begin{enumerate}
 \item All link functions $\{L_j,j=1,2,\cdots,h\}$ satisfy \textit{Property B}, that is,
$$\max_{1\leq j\leq h}~\sup_{n\geq 1}~\sup_t\sup_{1\leq p \leq n}~ \#\{q:1\leq q \leq n, L_j(p,q)=t\}\leq \Delta<\infty.$$\label{A1}
\item Input sequences $\{X_i^j(k): k\in\mathbb Z \text{ or }\mathbb Z^2\}$ are real random variables independent across $i$, $j$ and $k$ with mean zero and variance $1$ and the moments are uniformly bounded, that is,
$$\sup_{1\leq j\leq h}~\sup_{1\leq i\leq p_j}~\sup_{n\geq 1}~\sup_t~\sup_{1\leq p,q\leq n}\E\left[|X_{i}^j(L_j(p,q))|^k\right]\leq c_k<\infty.$$\label{A2}
\end{enumerate}
 \let\theenumi\myenumi

We consider $\{\frac1{\sqrt n}X_{i,n}^j, 1\leq i\leq p_j\}_{1\leq j\leq h}$ as elements of $\mathcal{A}_n$  given in (\ref{eq:matrixalgebra}) and investigate the joint convergence with respect to the normalized tracial states $\varphi_1$ or $\varphi_2$ (as in \eqref{varphi}). The sequence of matrices jointly converge if and only if for all monomials $q$, $$\varphi_d\left(q\left(\frac1{\sqrt n}\{X_{i,n}^j, 1\leq i\leq p_j\}_{1\leq j\leq h}\right)\right)$$ converge to a limit as $n\to\infty$ for either $d=1$ or $d=2$. For $d=1$, the convergence is in the almost sure sense. The case of $h=1$ and $p_1 = 1$ (a single patterned matrix) was dealt in \citet{bosesen} and $h=1$ and $p_1 > 1$ (i.i.d. copies of a single patterned matrix) was dealt in \citet{bosehazrasaha}. In particular, convergence holds for i.i.d. copies of any one of the five patterned matrices. The starting point in showing this was the trace formula. The related concepts of circuits, matchings and words  will be extended below to multiple copies of several matrices.

Since our primary aim is to show convergence for every monomial, we shall from now on, fix an arbitrary  monomial $q$ of length $k$. Then we may write,
\begin{equation}
 \label{eq:monomial}
q\left(\frac1{\sqrt n}\{X_{i,n}^j, 1\leq i\leq p_j\}_{1\leq j\leq h}\right)=\frac1{n^{k/2}}Z_{c_1,t_1}Z_{c_2,t_2}\cdots Z_{c_k,t_k},
\end{equation}
where $Z_{c_m,t_m}=X_{t_m}^{c_m}$ for $1\leq m\leq n$.

From~\eqref{eq:monomial} we get,
\begin{align}\label{eq:zeepi}
\widetilde{\mu_n}(q)&:=\frac1n\Tr\left[\frac1{n^{k/2}}Z_{c_1,t_1}Z_{c_2,t_2}\cdots Z_{c_k,t_k}\right]\notag\\
&=\frac{1}{n^{1+k/2}}\sum_{j_1,j_2,\cdots,j_k}\left[Z_{c_1,t_1}(L_{c_1}(j_1,j_2))Z_{c_2,t_2}(L_{c_2}(j_2,j_3))\cdots
Z_{c_k,t_k}(L_{c_k}(j_k,j_1))
\right] \notag \\
&=\frac{1}{n^{1+k/2}}\sum_{\substack{\pi:\{1,\cdots,k\}\to \{1,\cdots,n\}\\ \pi(0)=\pi(k)}}\prod_{i=1}^k Z_{c_i,t_i}(L_{c_i}(\pi(i-1),\pi(i)))\notag \\
&=\frac{1}{n^{1+k/2}}\sum_{\substack{\pi:\{1,\cdots,k\}\to \{1,\cdots,n\}\\ \pi(0)=\pi(k)}}\mathbf{Z}_\pi \quad \text{say}.
\end{align}
Also define,
\begin{equation}\label{eq:zeepialmostsure}\widehat{\mu_n}=\E[\widetilde{\mu_n}].\end{equation}
Keeping in mind that we seek to show the existence of the limits in
(\ref{eq:zeepi}) and (\ref{eq:zeepialmostsure}) as $ n \to \infty$, we
now develop some appropriate notions. In particular these help us to show that certain terms in
these sums are negligible in the limit.

Any map $\pi:\{1,\cdots,k\}\to \{1,\cdots,n\}$ with $\pi(0)=\pi(k)$
will be called a \textit{circuit}. Its dependence on $k$ and
$n$ will be suppressed. Observe that
$\widetilde{\mu_n}$ and $\widehat{\mu_n}$ involve sums over
circuits. Any value $L_{c_i}(\pi(i-1),\pi(i))$ is called an
\textit{$L$-value} of $\pi$. If an $L$-value is repeated $e$ times in $\pi$ then $\pi$ is said to have an \textit{edge} of order $e$. Due to independence and mean zero of
the input sequences,
\begin{equation}\label{eq:nonmatch}\E[\mathbf{Z}_\pi]=0 \quad\text{if $\pi$ has any edge of order one.}\end{equation}
If all $L$-values appear more than once then we say the circuit is
\textit{matched} and only these circuits are relevant due to the
above.

A circuit is said to be
\textit{color matched} if all the $L$-values are repeated within the
same color. A circuit is said to be \textit{color} and \textit{index} matched if in
addition, all the $L$-values are also repeated within the same
index.

Denote the colors and indices  present in
$q$ by $(c_1,c_2,\cdots, c_k)$ and $(t_1,t_2,\cdots,t_k)$ respectively.
We can define an equivalence relation on the set of color and index
matched circuits, extending the ideas of \citet{bosehazrasaha} and
\citet{bosesen}. We say $\pi_1\sim \pi_2$ if and only if their
matches take place at the same colors and at the same indices. Or,
\begin{align*}
 c_i=c_j, t_i=t_j \ \ \text{and}\ \ L_{c_i}(\pi_1(i-1),\pi_1(i))&=L_{c_j}(\pi_1(j-1),\pi_1(j))\\
 \Longleftrightarrow &&\\
c_i=c_j, t_i=t_j\ \ \text{and}\ \ L_{c_i}(\pi_2(i-1),\pi_2(i)))&= L_{c_j}(\pi_2(j-1),\pi_2(j)).
\end{align*}
An equivalence class can be expressed as a colored and indexed word
$w$: each word is  a string of letters in alphabetic order of their first occurrence with a
subscript and a superscript to distinguish the index and  the color respectively. The $i$-th position of $w$ is denoted by
$w[i]$. Any $i$ is a \textit{vertex} and it is \textit{generating} (or \textit{independent}) if either $i=0$ or $w[i]$ is the position of the first occurrence of a letter. By abuse of notation we also use $\pi(i)$ to denote a vertex.

For example, if $$q=X_1^1X_2^1X_1^2X_1^2X_2^2X_2^2X_2^1X_1^1=
Z_{1,1}Z_{1,2}Z_{2,1}Z_{2,1}Z_{2,2}Z_{2,2}Z_{1,2}Z_{1,1},$$ then
$a_1^1b_2^1c_1^2c_1^2d_2^2d_2^2b_2^1a_1^1$ is \textit{one}
colored and
indexed word corresponding to $q$. Any colored and indexed word uniquely determines the monomial it
corresponds to. A colored and indexed (matched) word is
\textit{pair-matched} if all its letters  appear exactly
twice. We shall see later that under \textit{Property B}, only such circuits
and words survive in the limits of (\ref{eq:zeepi}) and
(\ref{eq:zeepialmostsure}).

Now we define some useful subsets of the circuits. For a colored and indexed word $w$, let
\begin{equation}\label{eq:pici}\Pi_{CI}(w)=\{\pi : w[i]=w[j] \Leftrightarrow  (c_i,t_i,L_{c_i}(\pi (i-1), \pi(i)))=(c_j,t_j, L_{c_j}(\pi(j-1), \pi(j))\}.\end{equation}
Also define
\begin{equation}\label{eq:picistar}
\Pi_{CI}^*(w)=\{\pi : w[i]=w[j]\Rightarrow (c_i,t_i, L_{c_i}(\pi (i-1), \pi(i))= (c_j,t_j, L_{c_j}(\pi (j-1), \pi(j))\}.\end{equation}
Every colored and indexed word has a corresponding non-indexed
version which is
obtained by dropping the indices from the letters
(i.e. the subscripts). For example, $a_1^1b_2^1c_1^2c_1^2d_2^2d_2^2b_2^1a_1^1$ yields $a^1b^1c^2c^2d^2d^2b^1a^1$. For any monomial $q$,
dropping the indices amounts to replacing, for every $j$,  the independent copies
$X_i^j$ by a single $X^j$ with link function $L_j$.
In other words it corresponds to the case where
$p_j=1$ for $1\leq j\leq h$.

Let $\psi (q)$ be the monomial obtained by dropping the
indices from $q$. For example, $$\text{if}\ \
q=Z_{1,1}Z_{1,2}Z_{2,1}Z_{2,1}Z_{2,2}Z_{2,2}Z_{1,2}Z_{1,1}\ \
\text{then}\ \ \psi(q)=Z_1Z_1Z_2Z_2Z_2Z_2Z_1Z_1.$$
(\ref{eq:pici}) and (\ref{eq:picistar}) get mapped to the following subsets of  non-indexed colored
word $w'$ via $\psi$:
$$\Pi_{C}(w)=\{\pi : w[i]=w[j] \Leftrightarrow  c_i=c_j~ \text{and}~ L_{c_i}(\pi (i-1), \pi(i))=L_{c_j}(\pi(j-1), \pi(j))\},$$
$$\Pi_{C}^*(w)=\{\pi : w[i]=w[j] \Rightarrow  c_i=c_j~ \text{and}~ L_{c_i}(\pi (i-1), \pi(i))=L_{c_j}(\pi(j-1), \pi(j))\}. $$
Since pair-matched words are going to be crucial, let us define:
\begin{align*}
 CIW(2)&=\{w: w \ \ \text{is indexed and colored pair-matched corresponding to $q$}\}\\
 CW(2) &=\{w:w \ \ \text{is non-indexed colored pair-matched corresponding to $\psi(q)$}\}.
\end{align*}
 For $w \in CIW(2)$, let us consider the word obtained by dropping the indices of $w$. This defines an injective mapping into $CW(2)$ and we
 continue to denote this mapping by $\psi$.

For any $w \in CW(2)$ and $w'\in CIW(2)$, we define (\textit{whenever the limits exist}),
$$p_C(w)=\lim_{n\rightarrow \infty}\frac{1}{n^{1+k/2}}|\Pi^*_C(w)|\quad\text{and}\quad p_{CI}(w')=\lim_{n\rightarrow\infty}\frac1{n^{1+k/2}}|\Pi^*_{CI}(w')|.$$
\section{Main results}\label{main results}
Our first result is on the joint convergence of several patterned random
matrices
and is analogous to
Proposition 1 of \citet{bosehazrasaha} who considered the case $h=1$.
\begin{theorem}\label{main theorem-1}
Let $\{\frac1{\sqrt n}X_{i,n}^j, 1\leq i\leq p_j\}_{1\leq j\leq h}$ be a sequence of real symmetric patterned random matrices satisfying Assumptions~\eqref{A1} and~\eqref{A2}.
Fix a monomial $q$ of length $k$ and assume that, for all $w\in CW(2)$
\begin{equation}
\label{psi} p_C(w)=\lim_{n\rightarrow
\infty}\frac{1}{n^{1+k/2}}|\Pi^*_C(w)|~~~\text{exists.}
\end{equation}
Then,
\begin{enumerate}
\item \label{main theo-1:claim 1}for all $w\in CIW(2)$, $p_{CI}(w)$ exists  and $p_{CI}(w)=p_C(\psi(w))$,
\vskip5pt

\item \label{main theo-1:claim 2} we have
\begin{equation}
\lim_{n\rightarrow \infty}\widehat{\mu}_n(q)=\sum_{w\in CIW(2)}p_{CI}(w)=\alpha (q) \text{(say)}
\end{equation}
with
$$|\alpha(q)| \leq
\begin{cases}
 \frac{k!\Delta^{k/2}}{(k/2)!2^{k/2}}& \text{if $k$ is even and each index appears even number of times}\\
  0 &\text{otherwise}.
\end{cases}$$
\item \label{main theo-1:claim 3} $\lim_{n\rightarrow \infty}\widetilde{\mu_n}(q)=\alpha(q)$ almost surely.
\end{enumerate}
As a consequence if  \eqref{psi} holds for every $q$ then  $\{\frac1{\sqrt n}X_{i,n}^j, 1\leq i\leq p_j\}_{1\leq j\leq h}$ converges jointly in both the states $\varphi_1$ and $\varphi_2$  and the limit is independent of the input sequence.
\end{theorem}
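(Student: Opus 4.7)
The plan is to adapt the moment method with the ``volume'' (vertex-counting) method of \citet{bosesen, bosehazrasaha} to multiple colors and indices simultaneously.

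\textbf{Reducing to pair-matched circuits.} Starting from
$$\widehat{\mu}_n(q) = \frac{1}{n^{1+k/2}} \sum_{\pi} \E[\mathbf{Z}_\pi],$$
equation~\eqref{eq:nonmatch} kills all circuits with a singleton edge. Independence across colors, indices and link values (Assumption~\eqref{A2}) then restricts the sum to \emph{color and index matched} circuits. Using Property B (Assumption~\eqref{A1}), the standard vertex-count bound yields $|\Pi^*_{CI}(w)| \leq \Delta^{k/2}\,n^{g}$, where $g$ is the number of generating vertices of the associated word $w$: each of the $g$ generating vertices has at most $n$ choices, while each non-generating vertex, being constrained by an equation of the form $L_c(\pi(i-1),\cdot)=t$, has at most $\Delta$ choices. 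A matched word in which some letter appears three or more times satisfies $g \leq k/2$, so its contribution to $\widehat{\mu}_n(q)$ is $O(n^{-1/2})$. Only words in $CIW(2)$ (where $g=k/2+1$) survive; odd $k$, and monomials with odd exponent at some (color, index) pair, force $\alpha(q) = 0$.

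\textbf{Claims (1) and (2).} For any $w \in CIW(2)$, each letter appears exactly twice and carries a single color and index, so the color/index equalities required in~\eqref{eq:picistar} are automatic from the structure of $w$. Consequently the conditions defining $\Pi^*_{CI}(w)$ and $\Pi^*_C(\psi(w))$ coincide verbatim, giving $\Pi^*_{CI}(w) = \Pi^*_C(\psi(w))$ as sets. The hypothesis~\eqref{psi} immediately supplies $p_{CI}(w) = p_C(\psi(w))$. Summing over $CIW(2)$ establishes the formula for $\alpha(q)$. The quantitative bound $|\alpha(q)| \leq \Delta^{k/2}\,k!/((k/2)!\,2^{k/2})$ is then obtained by combining the vertex-count bound $p_{CI}(w) \leq \Delta^{k/2}$ with the well-known count $k!/((k/2)!\,2^{k/2})$ of pair partitions of $\{1,\dots,k\}$.

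\textbf{Claim (3): almost-sure convergence.} It suffices to show $\E[(\widetilde{\mu}_n(q) - \widehat{\mu}_n(q))^4] = O(n^{-2})$ and invoke Borel-Cantelli via Markov's inequality. Expanding the fourth central moment as a sum over quadruples of circuits $(\pi_1,\pi_2,\pi_3,\pi_4)$, the centering annihilates every quadruple in which some $\pi_s$ carries an edge not matched with an edge coming from one of the other three; only ``cross-matched'' quadruples remain. A joint vertex-count across the four circuits, applying Property B in each color, shows that cross-matching forces a loss of at least two generating vertices relative to the trivial count $n^{4(1+k/2)}$, delivering the required $O(n^{-2})$ bound. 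Independence from the input distribution is built in, since $\alpha(q)$ depends only on $\{p_C(\cdot)\}$ which in turn depend only on the link functions.

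\textbf{Main obstacle.} The most delicate step is the cross-matching accounting in the fourth-moment estimate: one must verify that the loss of two generating vertices is uniform in the combinatorial type of the quadruple, across all compatible colorings and indexings of the four circuits. The first reduction (matched $\Rightarrow$ pair-matched) is combinatorially the most intricate step, although technically routine; the new feature compared to \citet{bosehazrasaha} is that colors and indices must be threaded through the same counting framework, and the bridge~\eqref{psi} $\Rightarrow p_{CI}=p_C\circ\psi$ must be exploited to move from the single-pattern hypothesis to the multi-pattern conclusion.
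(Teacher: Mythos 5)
Your proposal follows the paper's proof essentially step for step: the same set identity $\Pi^*_{CI}(w)=\Pi^*_C(\psi(w))$ for part (1), the same reduction to pair-matched words via Property B and the same bound $|CIW(2)|\le k!/((k/2)!2^{k/2})$ combined with $p_{CI}(w)\le \Delta^{k/2}$ for part (2), and the same fourth-moment plus Borel--Cantelli argument for part (3). The single step you defer --- that jointly matched and cross-matched quadruples number $O(n^{2k+2})$ --- is exactly where the paper works, via a case split on the number $u$ of distinct $L$-values and a dynamic construction that fills in $\pi_1$ from both ends of a cross-matched edge to save the needed degrees of freedom.
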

\begin{remark}\label{remark:sum}
(i) Theorem \ref{main theorem-1} asserts that if the joint
convergence holds for $p_j=1, j=1,2,\cdots,h$ (that is if condition
(\ref{psi}) holds), then the joint convergence continues to  hold
for
$p_j\geq 1$. There is no general way of checking
\eqref{psi}. However,  see the next theorem.
\vskip5pt
(ii)
Under the conditions of Theorem \ref{main theorem-1}, for any fixed monomial $q$ that yields a symmetric matrix, the corresponding LSD exists.  Using truncation arguments, it is possible to prove this under the weaker assumption  that
the  input sequence
is i.i.d.\ with second moment finite.
\end{remark}
\begin{theorem}\label{main theorem-2}
Suppose Assumption~\eqref{A2} holds. Then  $p_C(w)$ exists for all monomials $q$ and for all $w\in CW(2)$, for any two of the following matrices at a time:
Wigner, Toeplitz, Hankel, Symmetric Circulant and Reverse Circulant.
\end{theorem}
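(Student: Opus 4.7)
The plan is to establish existence of $p_C(w)$ by the volume method, extending the single-matrix analyses of \citet{bosesen}, \citet{bry}, and \citet{hammond:miller} to the two-color setting. The crucial structural observation is that in a colored pair-matched word each letter is monochromatic, so the system of $L$-value constraints decomposes as a conjunction of per-letter constraints, each of which is imposed by a single link function and has exactly the same form as in the single-matrix case. Hence the mixed-color setting reduces to intersecting individual single-color constraint sets on a common space of generating coordinates.

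First, Property B yields the a priori bound $|\Pi_C^*(w)| \le \Delta^{k/2} n^{1+k/2}$: after fixing the $1 + k/2$ generating vertices, each non-generating vertex has at most $\Delta$ valid extensions. Next, rescale by setting $u_i = \pi(i)/n \in (0,1]$, so that $|\Pi_C^*(w)|/n^{1+k/2}$ becomes a Riemann sum over the generating $u$-coordinates of the indicator of the combined constraint set. For each pair-match at positions $(i,j)$ of color $c$, the constraint $L_c(\pi(i-1),\pi(i)) = L_c(\pi(j-1),\pi(j))$ translates, in the $n \to \infty$ limit, to a piecewise-affine relation among $u_{i-1},u_i,u_{j-1},u_j$: the unordered-pair equality $\{u_{i-1},u_i\} = \{u_{j-1},u_j\}$ for Wigner; $u_{i-1}-u_i = \pm(u_{j-1}-u_j)$ for Toeplitz; $u_{i-1}+u_i = u_{j-1}+u_j$ for Hankel; and their modular analogues on the torus $\mathbb{R}/\mathbb{Z}$ for Reverse and Symmetric Circulant.

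The intersection of these constraints over all $k/2$ letters is a finite union of affine subspaces of $[0,1]^{1+k/2}$ (or its torus quotient in the circulant cases), which has well-defined Lebesgue measure. Standard Riemann-sum arguments then show that the normalized count converges to this measure, yielding the existence of $p_C(w)$. Because the decomposition by letter is indifferent to whether different letters carry the same or different colors, a single uniform argument handles all ten distinct pairs drawn from the five matrices (as well as the same-type pairs, which are anyway already known from \citet{bosehazrasaha}).

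The main obstacle is verifying the Riemann-sum convergence to the prescribed volume in the presence of modular arithmetic and nested absolute values arising from the circulant link functions, especially the Symmetric Circulant whose link function $L(i,j) = n/2 - |n/2 - |i-j||$ depends explicitly on $n$. One must control the asymptotically negligible boundary regions where the piecewise-linear structure of the absolute-value constraint changes its sign pattern, and must check that in mixed systems such as (Toeplitz, Reverse Circulant) or (Hankel, Symmetric Circulant) the interplay of Euclidean and toroidal constraints still yields an affine-subspace decomposition with a limit volume. Once this is handled, existence of $p_C(w)$ follows uniformly for every choice of two matrices from the list and every pair-matched colored word, which is what the theorem asserts.
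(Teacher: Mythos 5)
Your proposal follows essentially the same route as the paper: the paper proves the theorem via a lemma that writes $\Pi_C^*(w)$ as a finite (non-disjoint) union of pieces $\Pi_{C,l}^*(w)$ indexed by the sign/modular resolutions of the link-function constraints (two cases per Toeplitz match, three per Reverse Circulant, six per Symmetric Circulant, the $C1/C2$ cases for Wigner), shows each normalized piece is a Riemann sum converging to an integral over the rescaled generating vertices, and shows pairwise overlaps are $\lito(n^{1+k})$ because they force a nontrivial affine relation among the generating coordinates. Your identification of the piecewise-affine decomposition, the Riemann-sum limit in the generating coordinates, and the Symmetric Circulant's $n$-dependent link function as the delicate point all match the paper's argument, so the proposal is correct in approach; just make sure to record explicitly that the union over cases is not disjoint and that the overlaps contribute negligibly.
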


Theorem~\ref{main theorem-1} and Theorem~\ref{main theorem-2} shows that if $\mathbb P$ is a symmetric polynomial in any of the two matrices Wigner, Toeplitz, Hankel, Symmetric Circulant and Reverse Circulant then the spectral measure of $\mathbb P$ converges almost surely.

In general the value of $p_C(w)$  cannot be computed for arbitrary
pair-matched word. In the two tables,
we provide some examples.
\begin{table}[h!]
{\footnotesize \begin{center}
\caption{$p_C(w)$ for colored words corresponding to monomials $q=q(T,H)$  }
\begin{tabular}{|c|c|c|}\hline
Monomial & Word & $p_C(w)$ \\
\hline
TTHH & aabb & 1 \\
\hline
THTH & abab & 2/3 \\
\hline
TTTTHH & aabbcc & 1 \\
& abbacc & 1\\
& ababcc & 2/3 \\
\hline
HHHHTT & aabbcc & 1\\
& abbacc & 1 \\
& ababcc & 0 \\
\hline
TTHTTH & aabccb & 1 \\
& abcbac & 1/2 \\
& abcabc & 1/2 \\
\hline
HHTHHT & aabccb & 1 \\
& abcbac & 1/2 \\
& abcabc & 0 \\
\hline
\end{tabular}
\end{center}}
\end{table}
\begin{table}[h!]
{\footnotesize \begin{center}
\caption{$p_C(w)$ for colored words corresponding to monomials $q=q(H,R)$ and $q(H,S)$}
\begin{tabular}{|c|c|c||c|c|c| }\hline
Monomial & Word & $p_C(w)$ & Monomial & Word & $p_C(w)$\\
\hline
RRHH & aabb & 1 &SSHH & aabb & 1 \\
\hline
RHRH & abab & 0 & SHSH & abab & 2/3 \\
\hline
RRRRHH & aabbcc & 1 &SSSSHH & aabbcc & 1 \\
& abbacc & 1 && abbacc & 1\\
& ababcc & 0 && ababcc & 1 \\

\hline
HHHHRR & aabbcc & 1 &HHHHSS & aabbcc & 1\\
& abbacc & 1 && abbacc & 1 \\
& ababcc & 0 & &ababcc & 0 \\
\hline
RRHRRH & aabccb & 1 & HHHSHS & aabcbc & 1/2 \\
& abcbac & 0 & & abbcac & 1/2 \\
& abcabc & 2/3 & & abcabc & 0 \\
\hline
HHRHHR & aabccb & 1 &HHSHHS & aabccb & 1 \\
& abcbac & 0 & & abcbac & 1/2 \\
& abcabc & 1/2 & & abcabc & 0 \\
\hline
\end{tabular}
\end{center}}
\end{table}
As seen
in the two tables,
$p_C(w)$ equals one for certain words. We now identify a class of such words.
This has ramifications later in the study of freeness.

If for a $w \in CW(2)$,  sequentially deleting all double letters of
the same color each time  leads to the empty word then we call $w$ a
\emph{colored Catalan word}.

In the non-colored and non-indexed situation, \citet{bosesen} established that $p(w)=1$ for the five
matrices for all Catalan words $w$. \citet{bana:bose}
introduced the following condition which guarantees this.

Consider the following boundedness property of
the
number of matches between rows across all pairs of columns. \vskip5pt

\textit{Property P:}
A link function $L$ satisfies \textit{Property P} if
\begin{equation}\label{eq:propertyp} M^*=\sup_n \sup_{i,j}\#
\{1\leq k\leq n: L(k,i)=L(k,j)\}<\infty.
\end{equation} Note that the five matrices
satisfy \textit{Property P}.\vskip5pt

It is not hard to see that colored Catalan words are in one one correspondence with non-crossing colored pair-partitions. Thus freeness and semi-circularity may be described for our limits in the language of words: if the limit satisfies $p_C(w)=0$ for all words which are not colored Catalan, then the limit is free. {\it In addition}, if $p_C(w)=1$ for all colored Catalan words, then the limits are also semicircular, which is precisely what happens for Wigner matrices. For the other four matrices, the limit is neither semicircular nor free but $p_C(w)=1$ for all colored Catalan words as Theorem \ref{theorem:catalan} shows. This extends the main result of \citet{bana:bose} to multiple copies of colored matrices.
\begin{theorem}\label{theorem:catalan}
(i) Suppose $X$ and $Y$ satisfy Assumption~\eqref{A1} and Assumption~\eqref{A2}. Consider any  monomial in $X$ and $Y$ of length $2k$.
 Then
 $$|\Pi_C^*(w)|\geq n^{1+k} \ \ \text{for any colored Catalan word}\ \  w.$$
 As a consequence, $p_C(w)\geq 1$ for any  colored Catalan word $w$.
\vskip5pt
\noindent
(ii) Suppose the link functions satisfy \textit{Property B} and \textit{Property P} and the input satisfies  Assumption~\eqref{A2}.
 Then for any colored
 Catalan word, $p_C(w)=1$.
\end{theorem}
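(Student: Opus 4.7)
For Part (i), I plan to induct on $k$. The base case $k = 0$ is immediate: the empty word has $|\Pi_C^*(w)| = n = n^{1+0}$. For the inductive step, the colored Catalan property locates adjacent positions $i$ and $i+1$ carrying a common letter with color $c$; let $w'$ of length $2(k-1)$ be the (still colored Catalan) word obtained by deleting these positions. For each $\pi' \in \Pi_C^*(w')$ (of which there are at least $n^k$ by the inductive hypothesis) and each $v \in \{1,\ldots,n\}$, I construct a circuit $\pi$ for $w$ by copying $\pi'$ for $l \le i-1$, setting $\pi(i) = v$ and $\pi(i+1) = \pi'(i-1)$, and putting $\pi(l) = \pi'(l-2)$ for $l \ge i+2$. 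The only new constraint, $L_c(\pi(i-1), v) = L_c(v, \pi(i-1))$, holds by symmetry of $L_c$; all the other constraints are inherited verbatim from $\pi'$. Distinct pairs $(\pi', v)$ yield distinct circuits $\pi$, so $|\Pi_C^*(w)| \ge n \cdot n^k = n^{k+1}$ and hence $p_C(w) \ge 1$.

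For Part (ii), I plan to run the same induction and partition $\Pi_C^*(w) = \mathcal{C} \sqcup \mathcal{N}$ by whether $\pi(i+1) = \pi(i-1)$ (canonical) or not (non-canonical). The Part~(i) construction together with its natural inverse (deleting positions $i$ and $i+1$ from the canonical $\pi$) is a bijection $\mathcal{C} \leftrightarrow \{1,\ldots,n\} \times \Pi_C^*(w')$, so $|\mathcal{C}| = n \cdot |\Pi_C^*(w')| \le n^{k+1}(1+o(1))$ by the inductive hypothesis. For $\mathcal{N}$, rewriting the $(i,i+1)$ match via symmetry of $L_c$ gives $L_c(\pi(i), \pi(i-1)) = L_c(\pi(i), \pi(i+1))$; since $\pi(i-1) \ne \pi(i+1)$ for $\pi \in \mathcal{N}$, Property~P restricts the generating vertex $\pi(i)$ to at most $M^*$ values given $\pi(i-1)$ and $\pi(i+1)$. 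Combining this restriction with the standard Property~B enumeration of the remaining $k$ generating vertices ($n$ choices each) and $k$ non-generating vertices ($\le \Delta$ choices each, with $\pi(i+1)$ further restricted to $\Delta-1$ non-canonical choices) yields $|\mathcal{N}| = O(n^k) = o(n^{k+1})$. Hence $|\Pi_C^*(w)| \le n^{k+1}(1+o(1))$, so $p_C(w) \le 1$, which combined with Part~(i) gives $p_C(w) = 1$.

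The main technical obstacle is the non-canonical bound. Property~P delivers the key factor $M^*$ for $\pi(i)$ only after $\pi(i-1)$ and $\pi(i+1)$ are known, whereas the natural generating-vs-non-generating enumeration processes $\pi(i)$ before $\pi(i+1)$; a careful treatment must either apply Property~P post-hoc (verifying that each $\pi \in \mathcal{N}$ automatically has $\pi(i)$ in the Property~P set of size $\le M^*$ and then summing over admissible triples while bounding completions by Property~B) or use an alternative parameterization that makes $\pi(i+1)$ the free variable and $\pi(i)$ the constrained one. The rigid nested structure of colored Catalan words is what rules out cascading coincidences that would otherwise inflate $|\mathcal{N}|$ past $O(n^k)$, and the single-matrix template of \citet{bana:bose} adapts cleanly once the coloring is tracked through the reduction.
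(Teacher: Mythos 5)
Your Part (i) is correct and is essentially the paper's own argument: the paper first invokes a cyclic-permutation lemma to normalize the word to $w=aaw_1$ and then inserts a free vertex exactly as you do; your direct treatment of an interior double letter is an equivalent variant, and your checks of constraint inheritance and injectivity are right.

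Part (ii), however, has a genuine gap at the non-canonical bound $|\mathcal N|=O(n^k)$, and you have correctly located but not resolved it. Your count asks $\pi(i)$ to be restricted to $M^*$ values via \textit{Property P} (which presupposes that $\pi(i-1)$ and $\pi(i+1)$ are already fixed) while simultaneously asking $\pi(i+1)$ to be restricted to $\Delta-1$ values via \textit{Property B} (which presupposes that $\pi(i)$ is already fixed). In any consistent enumeration one of $\pi(i),\pi(i+1)$ must be chosen first and contributes a factor of $n$, so both savings cannot be cashed in at once; the ``alternative parameterization'' fares no better, since it merely trades $n\cdot\Delta$ for $n\cdot M^*$. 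The gap is not cosmetic: for Toeplitz the non-canonical solution of $|\pi(i-1)-\pi(i)|=|\pi(i)-\pi(i+1)|$ is $\pi(i+1)=2\pi(i)-\pi(i-1)$, which exists for essentially every value of $\pi(i)$, so the pair $(\pi(i),\pi(i+1))$ really does admit order $n$ non-canonical realizations and a purely local count gives only $|\mathcal N|=O(n^{k+1})$. (Only when $\Delta=1$ --- Wigner, Hankel, Reverse Circulant --- is $\mathcal N$ empty and your argument closes.) The missing factor of $n$ must come from downstream: when $\pi(i+1)\neq\pi(i-1)$, deleting positions $i,i+1$ does \emph{not} yield an element of $\Pi_C^*(w')$, because the new edge $L(\pi(i-1),\pi(i+2))$ differs from the old edge $L(\pi(i+1),\pi(i+2))$; what is actually needed is that the number of \emph{open} circuits compatible with a colored Catalan word and having two prescribed \emph{distinct} endpoints is down by a factor of $n$ from the closed count. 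This is precisely the strengthened induction hypothesis $\mathbf S_k'$ in the paper (the bound $\gamma_{i,j}(w)\le M_k n^{k-1}$ for $i\neq j$, carried over from Theorem 2(ii) of Banerjee--Bose), and it is there, not in a local vertex count, that \textit{Property P} does its work. Your induction on $p_C(w')\le 1$ alone is strictly weaker and does not close.
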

It is well known that independent Wigner matrices are asymptotically free and also
they are asymptotically free of any class of deterministic matrices $\{D_{i,n}\}_{1\le i\le p}$ which
satisfy \eqref{eq:cond:free} (see Theorem 5.4.5 of \citet{anderson:guionnet:zeitouni}).
Moreover,
the deterministic matrices can be replaced by random matrices $\{A_n\}$ which
$\sup_n\|A_n\|<\infty$ (see \citet{speicher:2011}) or which
satisfy the sufficient condition (Condition C) of \citet{Capitaine:Casalis:2004}.

These
results
cannot be used here
since
the spectral norm of Toeplitz, Hankel, Reverse Circulant and Symmetric Circulant are
unbounded as $n \to \infty$.
Nevertheless, using
the notions of circuits and words we are able to show freeness in
a relatively simple way.
\begin{theorem}
\label{theorem:free1}
Suppose $\{W_{i,n}, 1\leq i\leq p, A_{i,n}, 1\leq i\leq p\}$ are
independent matrices satisfying assumptions~\eqref{A2} where $W_{i,n}$ are Wigner matrices and
$A_{i,n}$ are any of Toeplitz, Hankel, Symmetric Circulant or
Reverse Circulant matrices. Then $\{W_{i,n}, 1\leq i\leq p\}$ and $\{ A_{i,n}, 1\leq i\leq p\}$ are free in the limit.
\end{theorem}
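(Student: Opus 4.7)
The plan is to reduce Theorem~\ref{theorem:free1} to the following combinatorial claim: for any colored-indexed pair-matched word $w$ arising from a mixed monomial in the Wigner matrices $\{W_{i,n}\}$ and the patterned matrices $\{A_{i,n}\}$, if some Wigner-colored pair in $w$ crosses another pair (Wigner or patterned), then $p_{CI}(w) = 0$. Granting this, the only surviving pair-matched words are those in which every Wigner pair is non-crossing relative to the full pair-partition. On such words the Wigner pairs form an isolated non-crossing sub-matching---producing the semicircular contribution of the Wigner family---and factor off from the patterned contribution, which is precisely the combinatorial description of mixed moments under a free product of the two families. Combined with part~(\ref{main theo-1:claim 3}) of Theorem~\ref{main theorem-1}, this yields asymptotic freeness in both states $\varphi_1$ (almost surely) and $\varphi_2$.

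To prove the combinatorial claim, I would exploit the rigidity of the Wigner link function: $L_W(a,b) = L_W(c,d)$ if and only if $\{a,b\} = \{c,d\}$ as multisets. Thus a Wigner pair at edge-positions $(i,j)$ forces $\{\pi(i-1), \pi(i)\} = \{\pi(j-1), \pi(j)\}$, which is two scalar equations on the vertices, with two sub-cases: Case~A, $\pi(i-1)=\pi(j-1)$ and $\pi(i)=\pi(j)$, or Case~B, $\pi(i-1)=\pi(j)$ and $\pi(i)=\pi(j-1)$. By contrast the link functions $L_T, L_H, L_{RC}, L_{SC}$ each impose only a single scalar equation per pair. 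Processing vertices in circuit order, the typical pair costs one dimension, giving the uniform bound $|\Pi_C^*(w)|=O(n^{1+k})$ under Property~B. For a Wigner pair nested inside (or disjoint from) every other pair, one of the two Wigner equations is automatically consistent with the prior vertex assignments, so only one equation effectively bites; this is the mechanism behind the semicircular count $p_C(w)=1$ on Catalan Wigner words.

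When the Wigner pair at $(i,j)$ crosses some other pair $(i',j')$, say with $i<i'<j<j'$, I would argue that in both sub-cases A and B the second Wigner equation either forces an additional coincidence among generating vertices or renders the crossing pair's single equation redundant, both of which amount to a dimensional loss of one. This reduces the circuit count by a factor of $n$, yielding $|\Pi_C^*(w)|=O(n^k)=o(n^{1+k})$ and hence $p_{CI}(w)=0$. Concretely, one examines the eight vertex positions $\pi(i-1), \pi(i), \pi(j-1), \pi(j), \pi(i'-1), \pi(i'), \pi(j'-1), \pi(j')$ together with the sharing constraints coming from the interleaving of the two pairs, and uses that the patterned pair's equation (e.g.\ $\pi(i'-1)-\pi(i') = \pm(\pi(j'-1)-\pi(j'))$ for Toeplitz, and analogously for Hankel, Reverse Circulant, and Symmetric Circulant) becomes either inconsistent with the Wigner sub-case or already implied by it, except on a codimension-one subset. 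Since the argument relies only on Property~B and the linear structure of the link functions, never on the operator norm, it applies uniformly to all four patterned types despite their unbounded spectral norms.

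The main obstacle is the bookkeeping when several pairs cross the chosen Wigner pair simultaneously, or when multiple Wigner pairs coexist. I would handle this by processing the word inductively: pick an innermost crossing Wigner pair (one whose span contains no smaller crossing Wigner pair), perform the dimension-loss analysis above, and iterate on the reduced configuration. Pure Wigner-Wigner crossings reduce to the classical non-Catalan Wigner count, so the genuinely new case is Wigner-patterned crossings, where the specific structure of each patterned link function must be verified uniformly. The resulting proof stays at the level of vertex equations and in this way bypasses both the Weingarten calculus of \citet{spei} and the technical sufficient conditions of \citet{Capitaine:Casalis:2004} and \citet{Collins}.
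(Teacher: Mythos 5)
Your proposal follows essentially the same route as the paper's proof: your central combinatorial claim --- that a Wigner pair crossing any other pair forces $p_{CI}(w)=0$ via the rigidity of the Wigner link function combined with the telescoping linear relations of the Toeplitz/Hankel/circulant links, which create a non-trivial constraint among generating vertices and lose a factor of $n$ --- is precisely the content of Lemma~\ref{lemma:strong} (every Wigner string is pair-matched and only the $C2$ constraint survives), proved there by the same induction on the length of the Wigner string and the same increment-sum analysis, and the passage from this claim to freeness is the paper's reduction to the non-crossing moment formula~\eqref{free:description}. The only ingredients you leave implicit are the multiplicativity $p(w)=p(w_{(i,j)})p(w_{(i,j)^c})$ across a Wigner string (Lemma~\ref{lemma:C2}) and the asymptotic factorization of products of normalized traces needed to identify the per-cycle contributions with the limit moments of the patterned family (Lemma~\ref{lemma:trace:product}), both of which the paper isolates and proves.
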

\begin{remark}\label{rem:gue:free}
Incidentally, the freeness between GUE and other patterned matrices is much easier to establish.
Indeed,
 it can be shown that GUE and any patterned matrices having Property B and satisfying~\eqref{A2}, having LSD are
 asymptotically free.
 We  provide a brief proof 
 of this assertion at the end of Section~\ref{sec:proof}.
\end{remark}
\subsection{Sum of patterned random matrices}
\begin{proposition}
\label{prop:sum}
Let $A$ and $B$ be two independent patterned matrices
satisfying Assumptions~\eqref{A1} and~\eqref{A2}. Suppose
$p_C(w)$ exists for every $q$ and every $w$. Then
LSD for $\frac{A+B}{\sqrt{n}}$ exists in the almost sure sense, is symmetric and does not depend on the underlying distribution of the input sequences of $A$ and $B$.
Moreover, if either LSD of $\frac{A}{\sqrt{n}}$ or LSD of $\frac{B}{\sqrt{n}}$ has unbounded support then LSD of $\frac{A+B}{\sqrt{n}}$ also has unbounded support.
\end{proposition}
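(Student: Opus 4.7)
The plan is to expand powers of $A+B$ into finitely many monomials in $A$ and $B$, apply Theorem~\ref{main theorem-1} termwise, and then assemble the pieces via the moment method. Writing
$$\frac{1}{n}\Tr\!\left(\left(\frac{A+B}{\sqrt n}\right)^{k}\right)=\sum_{c\in\{A,B\}^{k}}\widetilde{\mu_n}(q_c),$$
where $q_c$ is a monomial of length $k$ (carrying the usual $n^{-k/2}$ normalisation), the sum has only $2^{k}$ terms. Since $p_C(w)$ is assumed to exist for every $w$ and every $q$, Theorem~\ref{main theorem-1}(3) gives $\widetilde{\mu_n}(q_c)\to\alpha(q_c)$ almost surely for each $c$, so the $k$-th moment of $L_n$ converges a.s.\ to $\beta_k:=\sum_c\alpha(q_c)$.

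To lift moment convergence to a.s.\ weak convergence of $L_n$, I would verify Carleman's condition using the bound in Theorem~\ref{main theorem-1}(2): for $k=2m$, $|\alpha(q_c)|\leq(2m)!\,\Delta^{m}/(m!\,2^{m})$, and hence
$$\beta_{2m}\leq 2^{2m}\cdot\frac{(2m)!\,\Delta^{m}}{m!\,2^{m}}\leq(4m\Delta)^{m},$$
so $\sum_{m}\beta_{2m}^{-1/(2m)}=\infty$. The moment sequence $\{\beta_k\}$ therefore determines a unique probability measure $\mu$, and the standard moment-to-weak-convergence argument combined with a.s.\ convergence of every moment yields $L_n\Rightarrow\mu$ almost surely. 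Symmetry is free: by Theorem~\ref{main theorem-1}(2) no pair-matched word of odd length exists, so $\alpha(q_c)=0$ for every odd-length monomial and $\beta_{2m+1}=0$ for all $m$. Distribution-independence of $\mu$ is inherited directly from the corresponding clause of Theorem~\ref{main theorem-1}.

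The most interesting piece is unbounded support. Its key input is the non-negativity $\alpha(q)=\sum_{w\in CIW(2)}p_{CI}(w)\geq 0$, valid because $p_{CI}(w)$ is the limit of the non-negative quantity $n^{-(1+k/2)}|\Pi^{*}_{CI}(w)|$. Consequently,
$$\beta_{2m}=\sum_{c\in\{A,B\}^{2m}}\alpha(q_c)\;\geq\;\alpha(A^{2m})=m^{(A)}_{2m},$$
the $2m$-th moment of the LSD of $A/\sqrt n$. If the latter LSD has unbounded support then $\limsup_{m}(m^{(A)}_{2m})^{1/(2m)}=\infty$, which by the displayed inequality forces $\limsup_m\beta_{2m}^{1/(2m)}=\infty$, so $\mu$ has unbounded support; the argument for $B$ is identical. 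The only (mild) obstacle is making sure the Carleman step genuinely transfers moment convergence to weak convergence of the empirical spectral distributions, and the uniform bound from Theorem~\ref{main theorem-1}(2) is tailored precisely to make this step routine; everything else is careful bookkeeping on top of the joint convergence already established.
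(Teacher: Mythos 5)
Your proposal is correct and follows essentially the same route as the paper's proof: expansion of $\left(\frac{A+B}{\sqrt n}\right)^k$ into the $2^k$ monomials, termwise application of Theorem~\ref{main theorem-1}, Carleman's condition from the bound in part (2), symmetry from the vanishing of odd-length $\alpha(q)$, and unboundedness from the non-negativity of $\alpha(q)$ together with the single monomial $A^{2m}$. If anything, you make explicit two points the paper only asserts, namely why $\alpha(q)\geq 0$ (it is a limit of the non-negative counts $n^{-(1+k/2)}|\Pi^*_{CI}(w)|$) and the $L^p$-norm fact behind $(m^{(A)}_{2m})^{1/(2m)}\to\infty$.
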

\begin{proof}
The assumptions imply that LSD for $\frac{A}{\sqrt{n}}$ and $\frac{B}{\sqrt{n}}$ exists.
By Theorem~\ref{main theorem-1},
$\{\frac{A}{\sqrt{n}},\frac{B}{\sqrt{n}}\}$ converge jointly and hence
$\lim_{n\rightarrow \infty}\frac{1}{n^{k/2+1}}E(\Tr(A+B)^k=\beta_k$ exists for all $k>0$. Now let us fix $k$. Let $Q_k$ be the set of monomials such that $(A+B)^k=\sum_{q\in Q_k}q(A,B).$ Hence
\begin{equation*}
\frac{1}{n}\Tr(\frac{A+B}{\sqrt{n}})^k=\frac{1}{n^{1+k/2}}\sum_{q\in Q_k}\Tr(q(A,B))\\
=\sum_{q\in Q_k}\widehat{\mu_n}(q)
\end{equation*}
where $\widehat{\mu_n}(q)$ is as in Section~\ref{sec:def}. By (3) of  Theorem~\ref{main theorem-1},  $\widehat{\mu_n}(q)\rightarrow \alpha(q)$, almost surely and hence,
$$\beta_k=\lim_{n\rightarrow \infty}\frac{1}{n}\Tr(\frac{A+B}{\sqrt{n}})^k=\sum_{q\in Q_k}\alpha(q)~~~almost~ surely.$$
Using (2) of Theorem~\ref{main theorem-1}, we have
$$\beta_{2k}= \sum_{q\in Q_{2k}}\alpha(q) \leq |Q_{2k}|\frac{(2k)!}{k!2^k}\Delta(L_1,L_2)^{k}=2^{2k}\frac{(2k)!}{k!2^k}\Delta(L_1,L_2)^{k}.$$
Now by using Stirling's formula,
$\beta_{2k}\leq (Ck)^k$ for some constant $C$. Hence
$\sum_k \beta_{2k}^{-1/2k}=\infty$ and \emph{Carleman's
Condition} is satisfied implying that the LSD exists.


To prove symmetry of the limit, let $q\in Q_{2k+1}$. Then from (2) of Theorem~\ref{main theorem-1}, it follows that $\alpha(q)=0.$
Hence  $\beta_{2k+1}=\sum_{q\in Q_{2k+1}}\alpha(q)=0$ and the distribution is symmetric.

To prove unboundedness, without loss of generality let us assume that LSD  $\mathcal{L}_A$ of $\frac{A}{\sqrt{n}}$ has unbounded support. Let us denote by $\beta_{2k}(A)$ the $(2k)$th moment of $\mathcal{L}_A$. Since $L^p$ norm converges to essential supremum as $p \rightarrow \infty$ it follows that $(\beta_{2k}(A))^{1/2k}\rightarrow \infty$ as $k\rightarrow \infty$. Also, $\beta_{2k}(A)= \alpha(q_{2k})$ where $q_{2k}(A,B)=A^{2k}$ and $q_{2k}\in Q_{2k}$. Since $\alpha(q)$ is non-negative for all $q$, it implies $\beta_{2k}\geq \beta_{2k}(A)$. So $\lim_{k\rightarrow \infty}(\beta_{2k})^{1/2k}=\infty $ and hence the LSD of $\frac{A+B}{\sqrt{n}}$ has unbounded support.
\end{proof}
In particular,
all conclusions in Proposition~\ref{prop:sum} hold when
$A$ and $B$  are any two of  Toeplitz, Hankel, Reverse Circulant and Symmetric Circulant matrices.
It does not seem easy to identify the LSD for these sums.
Some simulation results are given below.

When one of the matrix is Wigner,
Theorem~\ref{theorem:free1} implies that the limit is the free convolution of the semicircular law
and the corresponding LSD. This result about the sum when one of them is Wigner also follows from the results of \citet{pastur:vasilchuk}. It also follows from the work of \citet{biane} that any free convolution with the semi-circular law is continuous and the density can be expressed in terms of Stieltjes transform of the LSD. Unfortunately, the Stieltjes transform of the LSD of the Toeplitz and Hankel are
not known.

\vskip-30pt
\begin{figure}[htp]\label{fig:symcircnormalbino}
\centering
\includegraphics[height=60mm, width =60mm ]{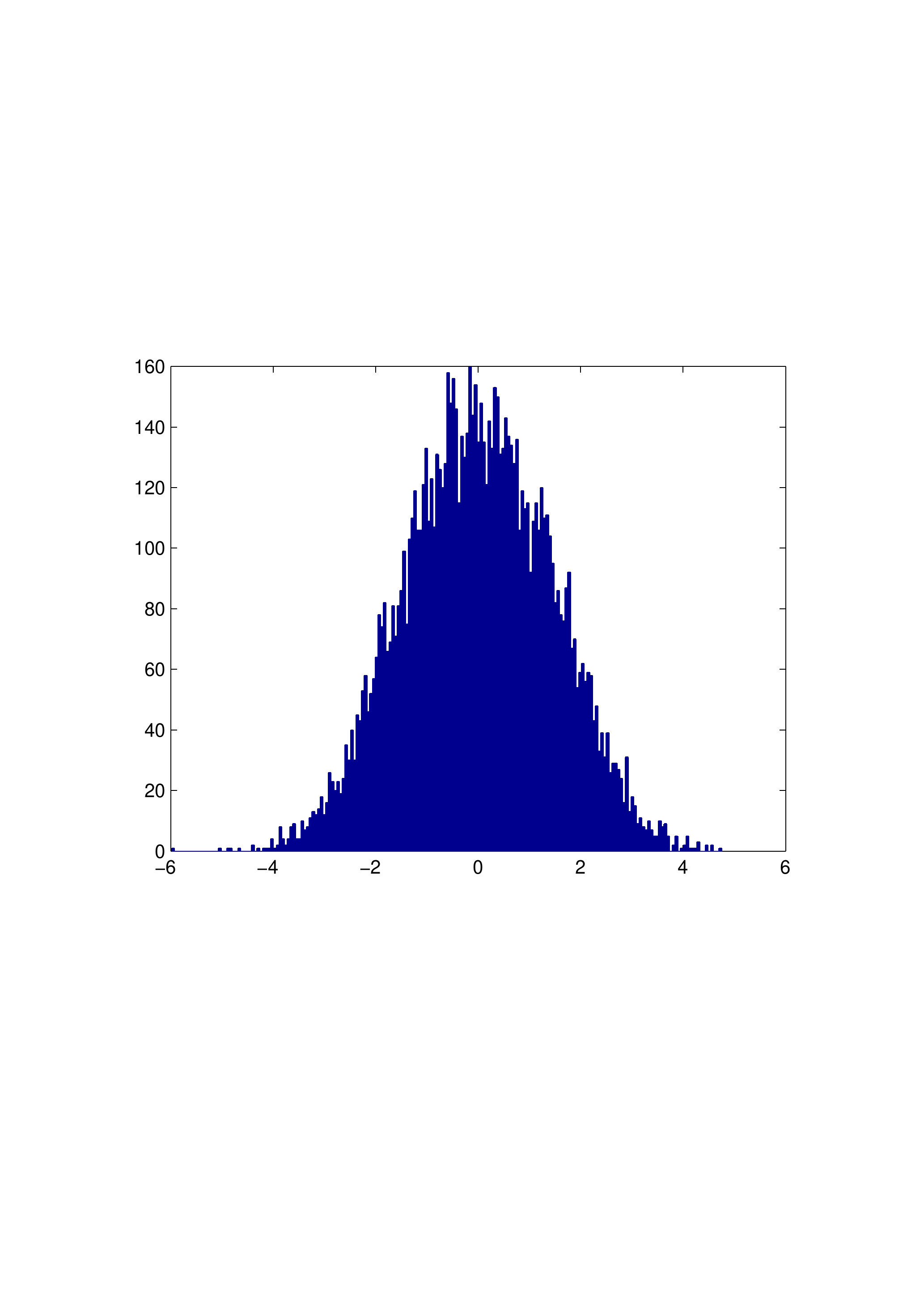}
\includegraphics[height=60mm, width =60mm ]{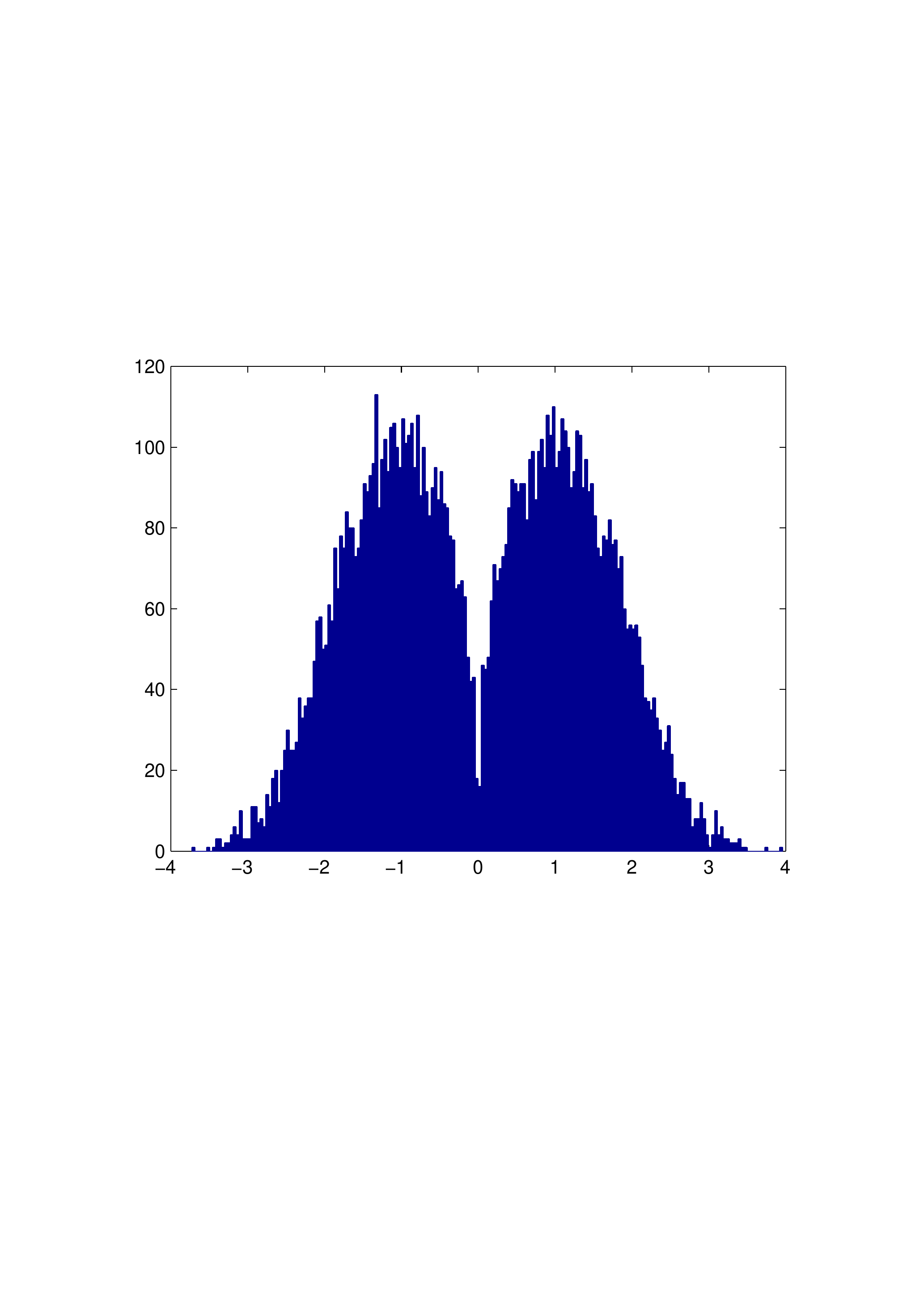}
\vskip-50pt\caption{\footnotesize (i) (left) Histogram plot of empirical distribution of Reverse Circulant+ Symmetric Circulant ($n=500$) with entries $N(0,1)$ (ii) (right) Histogram plot of empirical distribution of Reverse Circulant+Hankel ($n=500$) with $N(0,1)$ entries.}
\label{fig:case1&2}
\end{figure}

\vskip-40pt
\begin{figure}[htp]\label{fig:revcricnormalbino}
\centering
\includegraphics[height=60mm, width =60mm ]{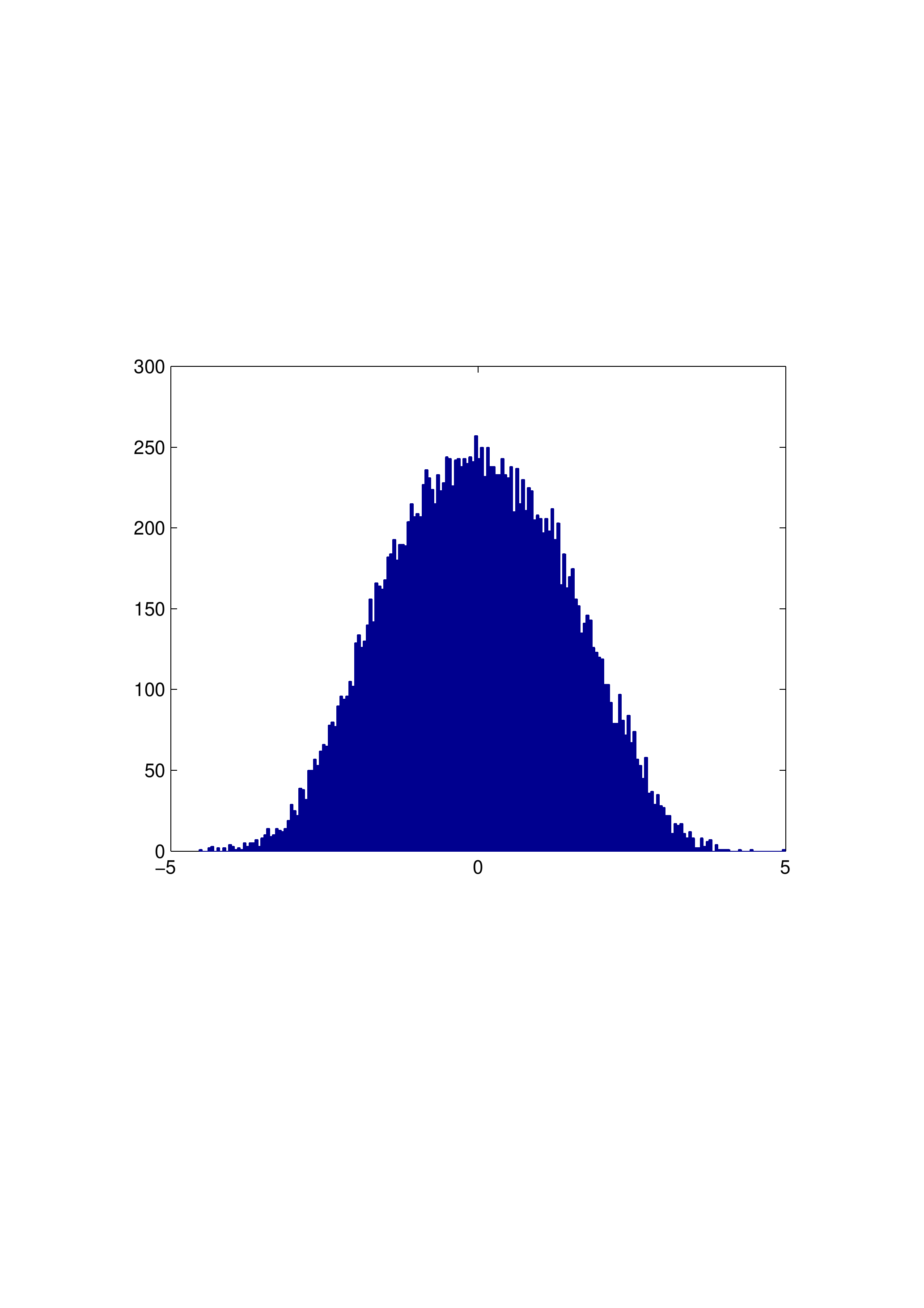}
\includegraphics[height=60mm, width =60mm ]{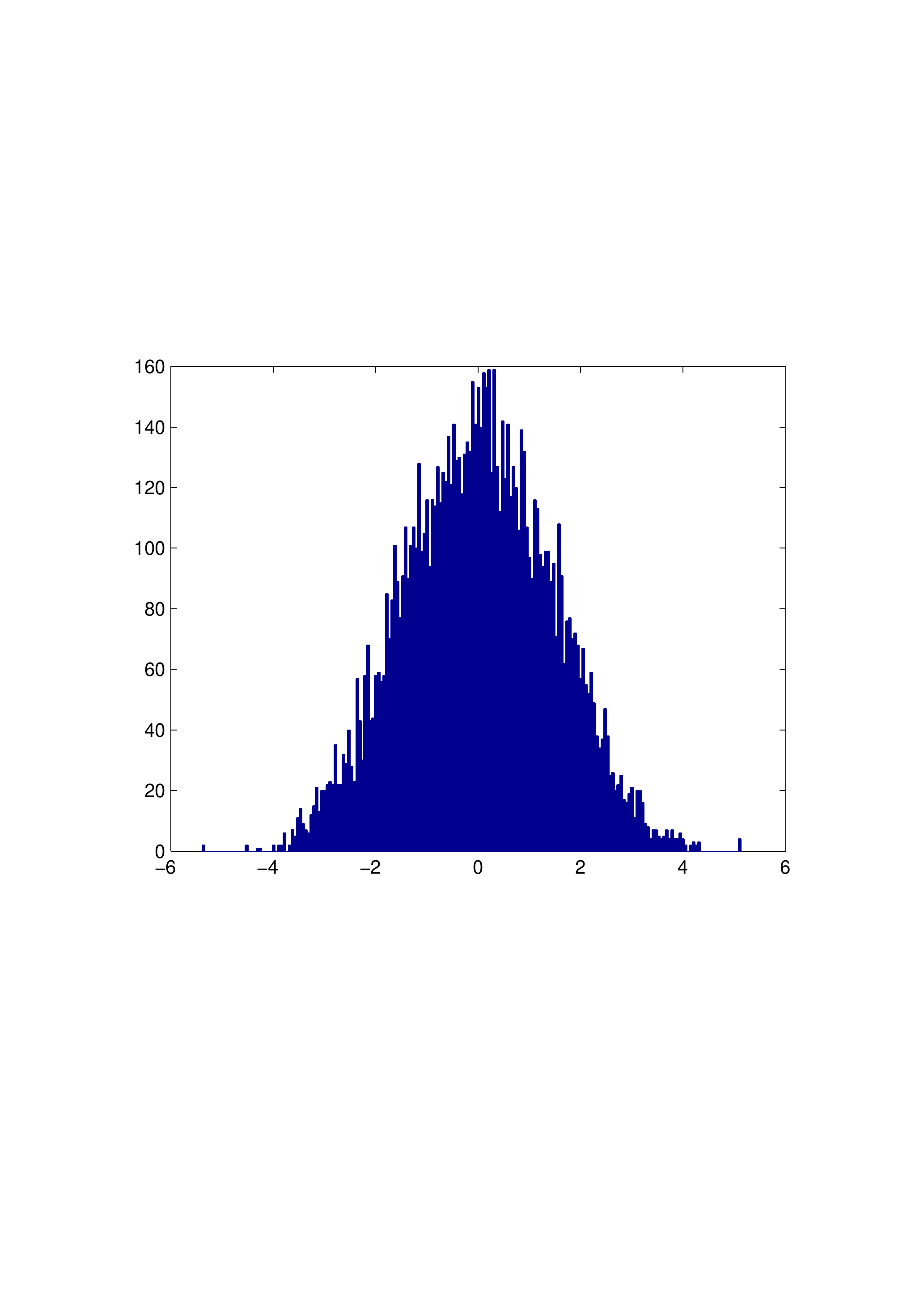}
\vskip-50pt\caption{\footnotesize (i) (left) Histogram plot of empirical distribution of Toeplitz+Hankel($n=1000$) with entries $N(0,1)$ (ii) (right)  Histogram plot of empirical distribution of Toeplitz+Symmetric Circulant ($n=500$) with $N(0,1)$ entries.}\label{fig:case3&4}
\end{figure}
\normalsize

\section{Proofs}\label{sec:proof}
{\it To simplify the notational aspects
in all our proofs we restrict ourselves to $h=2$.}
\subsection{Proof of Theorem~\ref{main theorem-1}}\label{sec:proof:main theorem1}
\noindent (1)  We first show that
\begin{equation}\label{psiw} \Pi_{C}^*(w)=\Pi_{CI}^*(w)\ \ \text{for all} \ \ w\in CIW(2).\end{equation}
Let $\pi \in \Pi_{CI}^*(w)$. As $q$ is fixed,
\begin{eqnarray*}
\psi(w)[i]=\psi(w)[j] &\Rightarrow & w[i]=w[j]\\
\Rightarrow (c_i,t_i, L_{c_i}(\pi(i-1),\pi(i))) &=& (c_j,t_j,
L_{c_j}(\pi(j-1),\pi(j)))~~~~~( \text{as}~ \pi \in \Pi_{CI}^*(w)).
\end{eqnarray*}
This implies $L_{c_i}(\pi(i-1), \pi(i))=L_{c_j}(\pi(j-1), \pi(j))$. Hence $\pi \in \Pi_{C}^*(\psi(w))$.

Now conversely, let $\pi \in \Pi_C^*(\psi (w))$. Then we have
\begin{eqnarray*}
w[i]&=&w[j]\\
\Rightarrow \psi(w)[i]&=&\psi(w)[j]\\
\Rightarrow  L_{c_i}(\pi(i-1), \pi(i))&=&L_{c_j}(\pi(j-1), \pi(j))\\
\Rightarrow  Z_{c_i,t_i}(L_{c_i}(\pi(i-1),\pi(i))) &= &Z_{c_j,t_j}(L_{c_j}(\pi(j-1),\pi(j))).
\end{eqnarray*}
as $w[i]=w[j]\Rightarrow c_i=c_j$ and $t_i=t_j$. Hence $\pi \in \Pi_{CI}^*(w)$.\\
So (\ref{psiw}) is established.
As a consequence,
$$p_{CI}(w)=\lim_{n\rightarrow \infty}\frac{1}{n^{1+k/2}}|\Pi_{CI}^*(w)|= p_C(\psi(w)).$$
Hence by (\ref{psiw}) $p_{CI}(w)$ exists for all $w\in CIW(2)$ and $p_{CI}(w)=p_C(\psi(w))$, proving \eqref{main theo-1:claim 1}.\vskip5pt

\noindent  (2)
Recall that $\mathbf Z_{\pi}=\prod_{j=1}^k Z_{c_j,t_j}(L_{c_ij}(\pi
(j-1), \pi(j))$ and using  (\ref{eq:zeepialmostsure}) and (\ref{eq:nonmatch})
\begin{equation}
\label{matched}
\widehat{\mu}_n(q)=\frac{1}{n^{1+k/2}}\sum_{w:~ w~matched} \sum_{\pi \in \Pi_{CI}(w)}\E(\mathbf Z_\pi).
\end{equation}
By using Assumption ~\eqref{A2}
\begin{equation}
\label{eq:boundedexp}
\sup_{\pi}\E|\mathbf Z_{\pi}| < K < \infty.
\end{equation}
By using often used arguments of \citet{bosesen} and of \citet{bry}, for
any colored and indexed matched word $w$ which is matched but is not
pair-matched,
\begin{equation}
 \label{eq:nonpair}
\lim_{n\rightarrow \infty}\frac{1}{n^{1+k/2}}\big|\sum_{\pi \in \Pi_{CI}(w)}\E(\mathbf Z_\pi)\big| \leq \frac{K}{n^{1+k/2}}\left|\Pi_{CI}(w)\right|\to 0.
\end{equation}
By using (\ref{eq:nonpair}), and  the fact that $\E(\mathbf Z_{\pi})=1$ for every color index pair-matched word (use Assumption~\eqref{A2}), calculating the limit in
(\ref{matched}) reduces to calculating $\lim \frac{1}{n^{1+k/2}}\sum_{w:~ w\in CIW(2)}|\Pi_{CI}(w)|.$

Now consider any $w\in CIW(2)$. Observe that any circuit in $\Pi_{CI}^*(w)-\Pi_{CI}(w)$ must have an edge of order four. Hence  by (\ref{eq:nonpair}),

$$\lim_{n\rightarrow
\infty}\frac{|\Pi_{CI}^*(w)-\Pi_{CI}(w)|}{n^{1+k/2}}=0.$$
As a consequence, since there are finitely many words,
\begin{equation}
\lim_{n \rightarrow \infty}\widehat{\mu}_n(q)= \lim_{n \rightarrow \infty}\sum_{w\in CIW(2)}\frac{|\Pi_{CI}(w)|}{n^{1+k/2}}=\lim_{n \rightarrow \infty}\sum_{w\in CIW(2)}\frac{|\Pi_{CI}^*(w)|}{n^{1+k/2}}=\sum_{w\in CIW(2)}p_{CI}(w)=\alpha(q).
\end{equation}
\comment{
From these above observations and Assumption~\eqref{A1} it follows that
\begin{equation}\label{eq:three limits}
\lim_{n\rightarrow \infty}\widehat{\mu}_n(q)=\lim_{n\rightarrow \infty}\frac{1}{n^{1+k/2}}\sum_{w:~ w\in CIW(2)}|\Pi_{CI}^*(w)|=\lim_{n\rightarrow \infty}\frac{1}{n^{1+k/2}}\sum_{w:~ w\in CIW(2)}|\Pi_{CI}(w)|
\end{equation}
in the sense that if either of the three limits exist in \eqref{eq:three limits} then the other two exist as well and they are equal.
 $w$ be a (colored and indexed) matched word which is not
pair-matched. Then
\begin{equation}
 \label{non pair matched vanishing}
\lim_{n\rightarrow \infty}\frac{1}{n^{1+k/2}}\sum_{\pi \in \Pi_{CI}(w)}\E(\mathbf Z_\pi)=0.
\end{equation}
 As there are only finitely many words corresponding to $q$, we have from \eqref{matched} and~\eqref{non pair matched vanishing},
\begin{equation}
\lim_{n \rightarrow \infty}\widehat{\mu}_n(q)=\lim_{n \rightarrow \infty}\frac{1}{n^{1+k/2}}\sum_{w:~ w\in CIW(2)} \sum_{\pi \in \Pi_{CI}(w)}\E(\mathbf Z_{\pi}).
\end{equation}
By Assumption~\eqref{A1}, $\E(Z_{\pi})=1$ for all $\pi \in \Pi_{CI}(w)$ if $w$ is pair-matched. Hence,
}
To complete the proof of (2), we note that, if either $k$ is odd or some index appears an odd number of times in $q$
then for that $q$, $CIW(2)$ is empty and hence, $\alpha(q)=0$. If $k$ is even and every index appears an even number of times, then we know
$$|CIW(2)|\leq |CW(2)|\leq \frac{k!}{{(k/2)}!2^{k/2}}.$$ Now note that $p_{CI}(w)\leq \Delta^{k/2}$. Combining all these, we get $|\alpha(q)|\leq \frac{k!\Delta^{k/2}}{(k/2)!2^{k/2}}$.\vskip5pt

\noindent  (3) Now we claim that  $$\E[(\widetilde{\mu
_{n}}(q)-\widehat{\mu}_n(q))^4]= O(n^{-2}).$$ Observe that,
\begin{equation}
\label{as1}
\E[(\widetilde{\mu _{n}}(q)-\widehat{\mu}_n(q))^4]=\frac{1}{n^{2k+4}}\sum_{\pi_1,\pi_2,\pi_3,\pi_4}\E(\prod_{j=1}^4(\mathbf Z_{\pi_j}-E(\mathbf Z_{\pi_j})).
\end{equation}
We say $(\pi_1,\pi_2,\pi_3,\pi_4)$ are ``jointly matched" if each
$L$-value occurs at least twice across all circuits (among same
color) and they are said to be ``cross matched" if each circuit has
at least one $L^*$ value which occurs in some other circuit.

If $(\pi_1,\pi_2,\pi_3,\pi_4)$ are not jointly matched then without
loss of generality  there exist some $L$-value in $\pi_1$ which does
not occur anywhere else. Using $\E(\mathbf Z_{\pi_1})=0$ and
independence,
\begin{equation}
\E(\prod_{j=1}^4(\mathbf Z_{\pi_j}-\E(\mathbf Z_{\pi_j}))=\E(\mathbf Z_{\pi_1}\prod_{j=2}^4(\mathbf Z_{\pi_j}-\E(\mathbf Z_{\pi_j}))= 0.
\end{equation}
Again, if $(\pi_1,\pi_2,\pi_3,\pi_4)$ are jointly matched but not cross matched, then without loss of generality, assume $\pi_1$ is only self matched. Then by independence,
\begin{equation}
\label{as2}
\E(\prod_{j=1}^4(\mathbf Z_{\pi_j}-\E(\mathbf Z_{\pi_j}))=\E[\mathbf Z_{\pi_1}-\E(\mathbf Z_{\pi_1})]\E[\prod_{j=2}^4(\mathbf Z_{\pi_j}-\E(\mathbf Z_{\pi_j}))]= 0.
\end{equation}
So we are left with circuits that are jointly matched and cross
matched with respect to $q$. Let $Q_q$ be the number of such
circuits.

We claim that $Q_q=O(n^{2k+2})$.
Since the circuits are jointly matched there are at most $2k$ distinct $L$ values among all the four circuits. Let $u$ be the number of distinct $L$ values (of a single color)  in the circuits. Clearly, for a fixed choice of matches among those  distinct $L$ values (number of such choices is bounded in $n$), the number of jointly matched and cross matched circuits are $O(n^{u+4})$, so the number of such circuits with $u\leq 2k-2$ is $O(n^{2k+2})$. Hence it suffices to prove that for a fixed choice of matches among $u=2k-1$ or $u=2k$ distinct $L$-values occurring across all four circuits, the number of  jointly matched and cross matched circuits is $O(n^{2k+2}).$

We
 consider only the case $u=2k-1$ and the
other case is dealt in a similar way.
 Since $u=2k-1$, it follows that every $L$-value occurs exactly twice across all four circuits. Since $\pi_1$ is not self matched, there is
 an $L$ value in $\pi_1$ which does not occur anywhere else in $\pi_1$.  We consider the following dynamic construction of $(\pi_1,\pi_2,\pi_3,\pi_4)$. Since the circuit is cross matched, there exists an $L$ value which is assigned to a single edge, say $L(\pi_1(i_*-1),\pi(i_*))$. First choose one of the $n$ possible values for the initial value $\pi_1(0)$, and continue
filling in the values of $\pi_1(i), i = 1, 2, . . . , i_*-1.$ Then, starting at $\pi_1(k) = \pi_1(0)$, sequentially choose the values of $\pi_1(k-1),\pi_1(k-2), . . . , \pi_1(i_*),$ thus completing the entire circuit $\pi_1$. At every stage there are $n$ ways to choose a vertex if there is no $L$-match of the edge being constructed with the previously constructed edges, otherwise there are at most $\Delta(L_1,L_2)$ choices.
 So there are $O(n)$ choices for at most $2k-2$ distinct $L$ values and hence the number of jointly matched and cross matched circuits for $u=2k-1$ is $O(n^{2k-2+4})$, as required.

By Assumption~\eqref{A2}, $\E[\prod_{j=1}^4(\mathbf Z_{\pi_j}-\E(\mathbf Z_{\pi_j}))]$ is uniformly bounded over all $(\pi_1,\pi_2,\pi_3,\pi_4)$ by $K$, say. By this and (\ref{as1})--(\ref{as2}), it follows that
\begin{equation}
\E[(\widetilde{\mu _{n}}(q)-\widehat{\mu}_n(q))^4]=O(\frac{n^{2k+2}}{n^{2k+4}})=O(n^{-2}).
\end{equation}
Now using
Borel-Cantelli Lemma,  $\widetilde{\mu _{n}}(q)-\widehat{\mu}_n(q)\rightarrow 0$ almost surely as $n\rightarrow \infty$ and this completes the proof.
\subsection{Proof of Theorem~\ref{main theorem-2}}
Condition~\eqref{psi} which needs to be verified (only for even degree monomials), crucially depends
on the type of the link function and hence we need to deal with
every example differently. Since we are dealing with only two link
functions, we
simplify the notation. Let $X$ and $Y$ be
patterned matrices with link function $L_1$ and $L_2$ respectively
with independent input sequences satisfying Assumptions~\eqref{A1}
and
\eqref{A2}. Let $q(X,Y)$ be any monomial such that
both $X$ and $Y$ occur an even number of times in $q$. Let
$deg(q)=2k$ and let the number of times $X$ and $Y$ occurs in the monomial
be $k_1$ and $k_2$ respectively. Note that we have $k=k_1+k_2$. Then
it is enough to show that~\eqref{psi} holds for every pair-matched colored word $w$ of
length $2k$ corresponding to $q$.

Let $X$ and $Y$ be any of the two following matrices: Wigner($W_n$), Toeplitz($T_n$), Hankel($H_n$), Reverse Circulant($RC_n$) and Symmetric Circulant($SC_n$).
The case when both $X$ and $Y$ are of the same pattern was dealt in \citet{bosehazrasaha}.

Proof of Theorem~\ref{main theorem-2} is immediate once we establish the following Lemma.
\begin{lemma}
\label{lemma:theorem-2} Let $X$ and $Y$ be any of the
matrices, $W_n, T_n, H_n, RC_n$ and $SC_n$,  satisfying
Assumption~\eqref{A2}. Let $w\in CW(2)$  corresponding to a monomial
$q$ of length $2k$. Then there exists a (finite) index set $I$
independent of $n$ and $\{\Pi_{C,l}^*(w):l\in I\}\subset
\Pi_{C}^*(w)$  such that \vskip5pt


\noindent  (1) $\quad \Pi_{C}^*(w)= \cup_{l\in I} \Pi_{C,l}^*(w), \text{ and } p_{C,l}(w):=
\lim_{n\rightarrow\infty}\frac{|\Pi_{C,l}^*(w)|}{n^{1+k}} \quad\text{exists for all} \ \ l\in I,$\vskip5pt

\noindent  (2)$\quad \text{for} \ \ l\neq l^\prime\ \  \text{we
have}$,
\begin{equation}\label{eq:intersection} | \Pi_{C,l}^*(w)\cap
\Pi_{C,l^\prime}^*(w)|=\lito(n^{1+k}).\end{equation}
\end{lemma}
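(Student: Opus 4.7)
The plan is to carry out a case-by-case volume-method argument, treating each of the five patterns within a common framework in the spirit of \citet{bosesen, bry}. After normalising vertices via $v_i := \pi(i)/n \in \{1/n,\ldots,1\}$, every matching equation $L_{c_i}(\pi(i-1),\pi(i)) = L_{c_j}(\pi(j-1),\pi(j))$ rewrites as a finite disjunction of affine relations on $(v_{i-1},v_i,v_{j-1},v_j)$: one case for Wigner (ordered-pair identification) and Hankel ($v_{i-1}+v_i = v_{j-1}+v_j$); two sign cases for Toeplitz, namely $v_{i-1}-v_i = \pm(v_{j-1}-v_j)$; a bounded number of wrap-around cases for Reverse Circulant due to the mod-$n$ operation; and a bounded number of cases for Symmetric Circulant arising from the two nested absolute values. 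In every pattern the per-letter case count is a constant depending only on the colour.

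Accordingly, I would define $I$ as the (finite, $n$-independent) Cartesian product, over the $k$ distinct letters of $w$, of the per-letter case-index sets, and let $\Pi_{C,l}^*(w) \subset \Pi_C^*(w)$ be the set of circuits whose matching at each letter realises the specific affine relation prescribed by $l$. Every circuit in $\Pi_C^*(w)$ falls into at least one such class by construction, establishing the covering in (1). Within a fixed class $l$, the $k$ affine matching equations allow the $k$ non-generating vertex coordinates to be solved as affine functions of the $k+1$ generating coordinates; summing indicators of the integer lattice points over the admissible region and applying the standard Riemann-sum approximation yields
\[
\frac{|\Pi_{C,l}^*(w)|}{n^{1+k}} \longrightarrow \mathrm{vol}(P_l),
\]
where $P_l \subset [0,1]^{k+1}$ is the polytope in the generating variables cut out by the constraints $v_j \in [0,1]$ for every vertex $j$ of the circuit.

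For (2), overlap $\Pi_{C,l}^*(w)\cap \Pi_{C,l'}^*(w)$ with $l \neq l'$ requires that at some letter two distinct affine relations are simultaneously satisfied; subtracting them produces a non-trivial linear constraint on the generating coordinates, reducing the feasible dimension by at least one. This gives $|\Pi_{C,l}^*(w)\cap \Pi_{C,l'}^*(w)| = \bigo(n^k) = \lito(n^{1+k})$, which is precisely \eqref{eq:intersection}.

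The main technical obstacle is Reverse Circulant and Symmetric Circulant: the mod-$n$ operation and the nested absolute values create boundary layers where a circuit straddles two affine regimes, and one must check that such boundary configurations have cardinality $\bigo(n^k)$ and are therefore negligible on the scale $n^{1+k}$. The careful enumeration of these regimes, together with a uniform-in-$l$ Riemann approximation and the dimension-drop verification in the overlap step across all five link functions, constitutes the bulk of the work.
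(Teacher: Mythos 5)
Your proposal is essentially the paper's own argument: the paper likewise decomposes each matching relation into a finite list of affine cases (two sign cases for Toeplitz, $a_i\in\{-1,0,1\}$ wrap-around cases for Reverse Circulant, six cases for Symmetric Circulant, the $C1$/$C2$ cases for Wigner), takes $I$ to be the product of these per-letter case sets, identifies $|\Pi_{C,l}^*(w)|/n^{1+k}$ as a Riemann sum converging to an integral over $[0,1]^{k+1}$, and proves \eqref{eq:intersection} by observing that two distinct cases at one letter force a non-trivial linear relation among the generating vertices, dropping the dimension by one. The only slip is that a Wigner match yields \emph{two} ordered-pair identifications (the constraints $C1$ and $C2$ of \eqref{eq:c1:c2}), not one, but this fits your framework verbatim and is exactly how the paper handles it.
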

Assuming  Lemma~\ref{lemma:theorem-2},
 $|\Pi_{C}^*(w)|= |\cup_{l\in I} \Pi_{C,l}^*(w)|$ for some finite index set $I$ and
\begin{equation}
p_C(w)= \lim_{n\rightarrow \infty}\frac{1}{n^{1+k}}|\Pi_{C}^*(w)|
= \sum_{l\in I} \lim_{n\rightarrow \infty}\frac{1}{n^{1+k}}|\Pi_{C,l}^*(w)|= \sum_{l\in I} p_{C,l}(w).
\end{equation}
The proof of this lemma treats each pair of matrices separately. Since the arguments are similar for the different pairs, we do not provide the detailed proof for each case but only a selection of the arguments in most cases.

The set $S$ of all generating vertices  of $w$ is split into the three classes $\{0\}\cup S_X \cup S_Y$ where
\begin{equation*}
 S_X=\{i\wedge j : c_i=c_j=X, \ \ w[i]=w[j] \},\ \
S_Y =\{i\wedge j : c_i=c_j=Y, \ \ w[i]=w[j] \}.
\end{equation*}
For every $i \in S-\{0\}$, let $j_i$ denote the index such that $w[j_i]=w[i]$.  Let $\pi \in \Pi_C^*(w)$.
\vskip5pt
\noindent
(i) \textit{Toeplitz and Hankel:} Let $X$ and $Y$ be respectively the Toeplitz ($T$)  and the Hankel ($H$) matrix.
Observe that,
$$|\pi(i-1)-\pi(i)|=|\pi(j_i-1)-\pi(j_i)| \text{ for all } i \in S_T$$
$$\pi(i-1)+\pi(i)=\pi(j_i-1)+\pi(j_i) \text{ for all } i \in S_H.$$
Let $I$ be $\{-1,1\}^{k_1}$ and $l=(l_1,...,l_{k_1}) \in I$. Let $\Pi_{C,l}^*(w)$ be the subset of $\Pi_C^*(w)$ such that,
$$\pi(i-1)-\pi(i)=l_i(\pi(j_i-1)-\pi(j_i))\qquad\text{ for all } i \in S_T,$$
$$\pi(i-1)+\pi(i)=\pi(j_i-1)+\pi(j_i)\qquad \text{ for all } i \in S_H. $$
Now clearly, $$\Pi_C^*(w)=\bigcup_{l} \Pi_{C,l}^*(w)\text{ (not a disjoint union).}$$
Now let us define, \begin{equation}\label{eq:vertex} v_i=\frac{\pi(i)}{n} \qquad\text{and}\qquad U_n=\{0,\frac{1}{n},...,\frac{n-1}{n}\}.\end{equation}
Then,
\begin{eqnarray*}|\Pi_{C,l}^*(w)|&=&\# \{(v_0,...,v_{2k}):v_i\in U_n ~\forall 0\leq i\leq 2k,~~ v_{i-1}-v_i=l_i(v_{j_i-1}-v_{j_i})~~\forall i \in S_T \\
 &&\ \ \ \ \ \ \text{ and }v_{i-1}+v_i=v_{j_i-1}+v_{j_i}~~\forall i \in S_H,~~v_0=v_{2k}\}.\end{eqnarray*}
Let us denote $\{v_i:i\in S\}$ by $v_S$. It can easily be seen from the above equations (other than $v_0=v_{2k}$) that each of the $\{v_i: i\notin S\}$ can be written uniquely as an integer linear combination $L_i^l(v_S)$. Moreover, $L_i^l(v_S)$ only contains $\{v_j:\ j\in S,~j<i\}$ with non-zero coefficients.
Clearly,
\begin{equation}
|\Pi_{C,l}^*(w)|=\# \{(v_0,...,v_{2k}): v_i\in U_n ~\forall 0\leq i\leq 2k,v_0=v_{2k}, v_i=L_i^l(v_S)\ \  \forall i\notin S\}.
\end{equation}
 Any integer linear combinations of elements of $U_n$ is again in $U_n$ if and only if it is between 0 and 1. Hence,
\begin{equation}
\label{riemann}
|\Pi_{C,l}^*(w)|=\# \{v_S: v_i\in U_n ~\forall i\in S, v_0=L_{2k}^l(v_S), 0\leq L_i^l(v_S)< 1 \ \ \forall i\notin S\}.
\end{equation}
From (\ref{riemann}) it follows that, $\frac{|\Pi_{C,l}^*(w)|}{n^{1+k}}$ is nothing but the Riemann sum for the function\\ $I(0\leq L_i^l(v_S)< 1, i\notin S, v_0=L_{2k}^l(v_S))$ over $[0,1]^{k+1}$ and converges to the integral and hence
\begin{equation}
p_{C,l}(w)= \lim_{n\rightarrow \infty}\frac{1}{n^{1+k}}|\Pi_{C,l}^*(w)|=\int\limits_{[0,1]^{k+1}}
I\left(0\leq L_i^l(v_S)< 1, i\notin S, v_0=L_{2k}^l(v_S)\right)dv_S.
\end{equation}
This shows part (1) of Lemma~\ref{lemma:theorem-2}. For  part (2) let $l\neq l'$. Without loss of generality, let us assume that, $l_{i_1}=-l_{i_1}'$. Let $\pi \in \Pi_{C,l}^*(w)\bigcap \Pi_{C,l'}^*(w)$.  Then $\pi (i_1-1)=\pi(i_1)$ and hence $L_{{i_1}-1}^l(v_S)=v_{i_1}$. It now follows along the lines of the preceding arguments that
\begin{equation}
\lim_{n\rightarrow \infty}\frac{1}{n^{1+k}}|\Pi_{C,l}^*(w)\bigcap \Pi_{C,l'}^*(w)| \leq \idotsint\limits_{[0,1]^{k+1}}
I(v_i=L_{{i_1}-1}^l(v_S))dv_S.
\end{equation}
$L_{{i_1}-1}^l(v_S)$ contains $\{v_j:\ j\in S,~j<i_1\}$ and hence $\{L_{{i_1}-1}^l(v_S)=v_i\}$ is a $k$-dimensional subspace of $[0,1]^{k+1}$ and hence has Lebesgue measure 0.\vskip5pt


\noindent
(ii) \textit{Hankel and Reverse Circulant:} Let $X$ and $Y$ be Hankel ($H$) and Reverse Circulant ($RC$) respectively.
Then
\begin{equation}\label{eq:hankeleq}\pi(i-1)+\pi(i)=\pi(j_i-1)+\pi(j_i)\quad \text{for all } i \in S_H,
\end{equation}
\begin{equation}\label{eq:reverseeq}
(\pi(i-1)+\pi(i))\mod~n=(\pi(j_i-1)+\pi(j_i))\mod~n\quad \text{for all } i \in S_{RC}.
\end{equation}
Clearly, as all the $\pi(i)$ are between 1 and $n$, relation (\ref{eq:reverseeq})
implies $(\pi(i-1)+\pi(i))-(\pi(j_i-1)+\pi(j_i))=a_in$ where $a_i\in \{0,1,-1\}$

Let $a=(a_{1},...,a_{k_2}) \in I=\{-1,0,1\}^{k_2}$. Let
$\Pi_{C,a}^*(w)$ be the subset of $\Pi_C^*(w)$ such that,
$$\pi(i-1)+\pi(i)=\pi(j_i-1)+\pi(j_i)~~\forall i \in S_H \text{ and }$$
$$(\pi(i-1)+\pi(i))-(\pi(j_i-1)+\pi(j_i))=a_in~~~\forall i \in S_{RC}.$$
Now clearly, $$\Pi_C^*(w)=\bigcup_{a} \Pi_{C,a}^*(w) \text{ (a disjoint union).}$$
Now we get that,
\begin{eqnarray*}
|\Pi_{C,a}^*(w)|&=&\# \{(v_0,...,v_{2k}): v_i\in U_n ~\forall 0\leq i\leq 2k,~ v_{i-1}+v_i=(v_{j_i-1}+v_{j_i})+a_i~~\forall i \in S_{RC}\\
&&\ \ \ \ \ \ \ \text{and}\ \ v_{i-1}+v_i=v_{j_i-1}+v_{j_i}~~\forall i \in S_H,~~v_0=v_{2k}\}.
\end{eqnarray*}
Other than $v_0=v_{2k}$,   each  $\{v_i: i\notin S\}$ can be written uniquely as an affine linear combination $L_i^a(v_S)+b_i^{(a)}$ for some integer $b_i^{(a)}$. Moreover, $L_i^a(v_S)$ only contains $\{v_j:\ j\in S,~j<i\}$ with non-zero coefficients.
Arguing as in the previous case,
\begin{equation}
\label{riemann2}
|\Pi_{C,a}^*(w)|=\# \{v_S: v_i\in U_n ~\forall i\in S, v_0=L_{2k}^a(v_S)+b_{2k}^{(a)}, 0\leq L_i^a(v_S)+ b_i^{(a)}< 1 \forall i\notin S\}.
\end{equation}
This is again a Riemann sum and hence as before,
\begin{equation*}
    p_{C,a}(w) =\lim_{n\rightarrow \infty}\frac{1}{n^{1+k}}|\Pi_{C,a}^*(w)|
=\int\limits_{[0,1]^{k+1}}
I\left(0\leq L_i^a(v_S)+b_i^{(a)} <1, i\notin S, v_0=L_{2k}^a(v_S)+b_{2k}^{(a)}\right)dv_S
\end{equation*}
and the proof of this case is complete. \vskip5pt

\noindent
(iii)  \textit{Hankel and Symmetric Circulant:} Let $X$ and $Y$ be Hankel ($H$) and Symmetric Circulant~($SC$) respectively.  Note that
$$\pi(i-1)+\pi(i)=\pi(j_i-1)+\pi(j_i)~~\forall i \in S_H ~\text{and }$$
$$n/2-|n/2-|\pi(i-1)-\pi(i)||=n/2-|n/2-|\pi(j_i-1)-\pi(j_i)||~~\forall i \in S_S. $$
It can be easily seen from the second equation above that either $|\pi(i-1)-\pi(i)|= |\pi(j_i-1)-\pi(j_i)|$ or $|\pi(i-1)-\pi(i)|+|\pi(j_i-1)-\pi(j_i)|=n$. There are six cases for each Symmetric Circulant match $[i,j_i]$, and with $v_i=\pi(i)/n$, these are:
\begin{enumerate}
\item{$v_{i-1}-v_i- v_{j_i-1}+v_{j_i}=0$.}
\item{$v_{i-1}-v_i+ v_{j_i-1}-v_{j_i}=0$.}
\item{$v_{i-1}-v_i+ v_{j_i-1}-v_{j_i}=1$.}
\item{$v_{i-1}-v_i- v_{j_i-1}+v_{j_i}=1$.}
\item{$v_{i}-v_{i-1}+ v_{j_i-1}-v_{j_i}=1$.}
\item{$v_{i}-v_{i-1}+ v_{j_i}-v_{j_i-1}=1$.}
\end{enumerate}
Now we can write $\Pi_C^*(w)$ as the (not disjoint) union  of $6^{k_2}$ possible $\Pi_{C,l}^*(w)$ where $l$ denotes the combination of cases (1)--(6) above that is satisfied in the $k_2$ matches of Symmetric Circulant. For each  $\pi \in \Pi_{C,l}^*(w)$, each $\{v_i: i\notin S\}$ can be written uniquely as an affine integer combination of $v_S$. As in the previous two pairs of matrices in (i) and (ii),
$\lim_{n\rightarrow \infty}\frac{1}{n^{1+k}}|\Pi_{C,l}^*(w)|$ exists as an integral.
\comment
{Now, it remains to show that
\begin{equation}
\label{disjoint-1}
\lim_{n\rightarrow \infty}\frac{1}{n^{1+k}}|\Pi_{C,l}^*(w)\bigcap \Pi_{C,l'}^*(w)|=0\ \ \text{for}\ \ l \neq l'.
\end{equation}
}

Now (\ref{eq:intersection})  can be checked case by case.
As a typical case suppose Case 1 and Case 3 hold.
Then $\pi(i-1)-\pi(i)=n/2$ and $v_{i-1}-v_i=1/2$. Since $i$ is generating and $v_{i-1}$ is a linear combination of $\{v_j:\ j\in S,~j<i\}$, this implies a non-trivial linear relation between the independent vertices  $v_S$. This, in turn implies that the number of circuits $\pi$ satisfying the above conditions is $o(n^{1+k})$.
\vskip5pt

\noindent
(iv) \textit{Toeplitz and Symmetric Circulant:}
Let $X$ and $Y$ be Toeplitz ($T$) and Symmetric Circulant ($SC$) respectively. Again note that,
$$|\pi(i-1)-\pi(i)|=|\pi(j_i-1)-\pi(j_i)|~~\forall i \in S_T ~\text{and }$$
\begin{equation}\label{eq:symcirmatch}
n/2-|n/2-|\pi(i-1)-\pi(i)||=n/2-|n/2-|\pi(j_i-1)-\pi(j_i)||~~\forall i \in S_{SC}.
\end{equation}
Now,
(\ref{eq:symcirmatch}) implies
either $|\pi(i-1)-\pi(i)|= |\pi(j_i-1)-\pi(j_i)|$ or $|\pi(i-1)-\pi(i)|+|\pi(j_i-1)-\pi(j_i)|=n$.

There are six cases for each Symmetric Circulant match as in Case (iii) above and two cases for each  Toeplitz match.

As before we can write $\Pi_C^*(w)$ as the (not disjoint) union of $2^{k_1}\times 6^{k_2}$ possible $\Pi_{C,l}^*(w)$ where $l$ denotes a combination of cases (1)--(6)  for all $SC$ matches (as in Case (iii)) and a combination of cases (1)-(2) for all $T$ matches. As before, for each  $\pi \in \Pi_{C,l}^*(w)$, each of the $\{v_i: i\notin S\}$ can be written uniquely as an affine integer combination of $v_S$.
As
earlier, $\lim_{n\rightarrow \infty}\frac{1}{n^{1+k}}|\Pi_{C,l}^*(w)|$ exists as an integral.

Now, (\ref{eq:intersection})
\comment{
\begin{equation}
\label{disjoint-2}
\lim_{n\rightarrow \infty}\frac{1}{n^{1+k}}|\Pi_{C,l}^*(w)\bigcap \Pi_{C,l'}^*(w)|=0 \ \ \text{for }l \neq l'.
\end{equation}
}
is
again checked case by case.
Suppose $l\neq l'$ and $\pi \in \Pi_{C,l}^*(w)\bigcap \Pi_{C,l'}^*(w)$. For  $l\neq l'$, there must be one Toeplitz or Symmetric Circulant match such that two of the possible cases in (1)--(2) or in (1)--(6) occur simultaneously.  Here we just deal with a typical pair
Case (1) and Case (2) for the Toeplitz match. Then we have $\pi(i-1)-\pi(i)=0$ and hence $v_{i-1}-v_i=0$. Since $i$ is generating and $v_{i-1}$ is a linear combination of $\{v_j:\ j\in S,~j<i\}$, this implies there exist a non-trivial relation between the independent vertices  $v_S$. This, in turn implies that the number of circuits $\pi$ satisfying the above conditions in $o(n^{1+k})$.
Now suppose the Symmetric Circulant match happens for both case (1)  and case (2).
Then again we have $v_i=v_{i-1}$ and we can argue as before to conclude that (\ref{eq:intersection}) holds.
\vskip5pt

\noindent
(v)  \textit{Toeplitz and Reverse Circulant:}
Let $X$ and $Y$ be Toeplitz ($T$) and Reverse Circulant ($RC$) respectively.
Note,
$$|\pi(i-1)-\pi(i)|=|\pi(j_i-1)-\pi(j_i)|\quad \text{for all } i \in S_T, $$
$$(\pi(i-1)+\pi(i))\mod~n=(\pi(j_i-1)+\pi(j_i))\mod~n\quad \text{for all } i \in S_{RC}. $$
Clearly, as all the $\pi(i)$ are between 1 and $n$, $(\pi(i-1)+\pi(i))\mod~n=(\pi(j_i-1)+\pi(j_i))\mod~n$ implies $(\pi(i-1)+\pi(i))-(\pi(j_i-1)+\pi(j_i))=a_in$ where $a_i\in \{0,1,-1\}$

Let the number of Toeplitz and Reverse Circulant matches  be $k_1$, and $k_2$ respectively and let us denote $S_T=\{i_1,i_2,...,i_{k_1}\}, S_{RC}=\{i_{k_1+ 1},i_{k_1+2},...,i_{k_1+k_2}\}$.\\ Let $l=(c,a)=(c_{i_1},...,c_{i_{k_1}},a_{i_{k_1+1}},...,a_{i_{k_1+k_3}}) \in I=\{-1,1\}^{k_1}\times \{-1,0,1\}^{k_3}$.

Let $\Pi_{C,l}^*(w)$ be the subset of $\Pi_C^*(w)$ such that,
$$\pi(i-1)-\pi(i)=c_i(\pi(j_i-1)-\pi(j_i))~~\forall i \in S_T$$
$$\pi(i-1)+\pi(i)=\pi(j_i-1)+\pi(j_i)+a_in~~\forall i \in S_{RC}. $$
Now clearly, $$\Pi_C^*(w)=\bigcup_{l\in I} \Pi_{C,l}^*(w),$$ and translating this in the language of $v_i$'s, we get
\begin{eqnarray*}
|\Pi_{C,l}^*(w)|&=&\# \{(v_0,...,v_{2k}): v_i\in U_n ~\forall 0\leq i\leq 2k,~ v_{i-1}+v_i=(v_{j_i-1}+v_{j_i})+a_i~~\forall i \in S_{RC}\\
&& \ \ \ \ \ \ \text{and}\ \ \ \ v_{i-1}-v_i=c_i(v_{j_i-1}-v_{j_i})~~\forall i \in S_T,~~v_0=v_{2k}\}.
\end{eqnarray*}
As in the previous cases,  $\lim_{n\to\infty}\frac{|\Pi_{C,l}^*(w)|}{n^{1+k}}$ exists.
It remains to show that,
$\lim_{n \rightarrow \infty}\frac{|\Pi_{C,l}^*(w)\bigcap \Pi_{C,l'}^*(w)|}{n^{1+k}}=0 \ \ \text{for}\  \ l \neq l'.$
If $l=(c,a)\neq l'=(c',a')$, then either $c \neq c'$ or $a\neq a'$. If $c=c'$, then clearly $\Pi_{C,l}^*(w)$ and $\Pi_{C,l'}^*(w)$ are disjoint. Let $c \neq c'$. Without loss of generality, we assume $c_{i_1}=-c_{i_1}$. Then clearly, for every $\pi \in  \Pi_{C,l}^*(w)\bigcap \Pi_{C,l'}^*(w)$ we have $v_{i_1-1}=v_i$, which gives a non-trivial relation between $\{v_j:\ j\in S\}$. That in turn implies the required limit is 0.\vskip5pt

\noindent
(vi)  \textit{Reverse Circulant and Symmetric Circulant:}
Let $X$ and $Y$ be Reverse Circulant ($RC$) and Symmetric Circulant ($SC$) respectively. Then
$$\pi(i-1)+\pi(i) \mod~n=\pi(j_i-1)+\pi(j_i) \mod~n~~\forall i \in S_{RC} ~\text{and }$$
$$n/2-|n/2-|\pi(i-1)-\pi(i)||=n/2-|n/2-|\pi(j_i-1)-\pi(j_i)||~~\forall i \in S_{SC}.$$
As before, the latter equation implies
either $|\pi(i-1)-\pi(i)|= |\pi(j_i-1)-\pi(j_i)|$ or $|\pi(i-1)-\pi(i)|+|\pi(j_i-1)-\pi(j_i)|=n$.

There are now three cases for each  Reverse Circulant match:
\begin{enumerate}
\item{$v_{i-1}+v_i- v_{j_i-1}-v_{j_i}=0$.}
\item{$v_{i-1}+v_i- v_{j_i-1}-v_{j_i}=1$.}
\item{$v_{i-1}+v_i- v_{j_i-1}-v_{j_i}=-1$.}
\end{enumerate}
Also, there are six cases for each Symmetric Circulant match as in Case (iii).

 As before we can write $\Pi_C^*(w)$ as the
union of $3^{k_1}\times 6^{k_2}$ possible $\Pi_{C,l}^*(w)$. Hence arguing in a similar manner, $\lim_{n\rightarrow
\infty}\frac{1}{n^{1+k}}|\Pi_{C,l}^*(w)|$ exists as an integral.
Now, to check (\ref{eq:intersection}),
\comment{\begin{equation}
\label{disjoint}
\lim_{n\rightarrow \infty}\frac{1}{n^{1+k}}|\Pi_{C,l}^*(w)\bigcap \Pi_{C,l'}^*(w)|=0 \ \ \text{for}  l\neq l'.
\end{equation}
}
case by case. Suppose $l\neq l'$ and $\pi \in \Pi_{C,l}^*(w)\bigcap \Pi_{C,l'}^*(w)$. Since $l\neq l'$, there must be one Reverse Circulant or Symmetric Circulant match such that two of the possible cases (1)--(3) or (1)--(6) (which appear in Case (iii)) occur simultaneously. It is easily seen that such an occurrence is impossible for a Reverse Circulant match. We deal with one typical Symmetric Circulant match. Suppose then
we have both case (1) and case (2). Then again we have $v_i=v_{i-1}$
and as a consequence (\ref{eq:intersection}) holds.
\vskip5pt

\noindent
(vii) \textit{Wigner and Hankel:} Let $X$ and $Y$ be Wigner ($W$) and Hankel ($H$) respectively. Observe that,
\begin{equation}\label{eq:c1:c2}(\pi(i-1),\pi(i))=\begin{cases}
                     (\pi(j_i-1),\pi(j_i))&\quad\text{(Constraint $C1$)}\\
                    (\pi (j_i),\pi(j_i-1)) &\quad\text{(Constraint $C2$, for all $i\in S_W$).}
                    \end{cases}\end{equation}
Also, $\pi(i-1)+\pi(i)=\pi(j_i-1)+\pi(j_i)$ for all $i\in S_H$. So for each Wigner match there are two constraints and hence there are $2^{k_1}$ choices. Let $\lambda$ be a typical choice of $k_1$ constraints and $\Pi^*_{C,\lambda}(w)$ be the subset of $\Pi_C^*(w)$ where the above relations hold. Hence $$\Pi_C^*(w)=\bigcup_\lambda \Pi_{C,\lambda}^*(w) \quad\text{(not a disjoint union).}$$ Now using equation~\eqref{eq:vertex} we have,
\begin{eqnarray*}
|\Pi^*_{C,\lambda}(w)|&=&\#\{(v_{0},v_{1}\ldots v_{2k})\, :0\, \le\, v_{i}\,\le 1,v_{0}=v_{2k},v_{i-1}+v_{i}=v_{j_i-1}+v_{j_i}, ~i\in S_H\\
&& \ \ \ \  v_{i-1}=v_{j_i-1}, v_{i}=v_{j_i}, (C1), v_{i-1}=v_{j_i}\,,v_{i}=v_{j_i-1} (C2), i\in S_W\}.
\end{eqnarray*}
 It can be seen from the above equations that each $v_j$, $j\notin S$ can be written (not uniquely) as a linear combination $L_j^\lambda$ of elements in $v_S$. Hence as before,
\begin{eqnarray*}
|\Pi^*_{C,\lambda}(w)|&=&\#\{ v_S: v_i=L_i^\lambda(v_S), v_0=v_{2k}, \text{ for }i\notin S, ,v_{i-1}+v_{i}=v_{j_i-1}+v_{j_i}, ~i\in S_H\\
 &&\ \ \ \  v_{i-1}=v_{j_i-1}, v_{i}=v_{j_i}, (C1), v_{i-1}=v_{j_i}\,,v_{i}=v_{j_i-1} (C2),i\in S_W\}.\end{eqnarray*} So the limit of $|\Pi^*_{C,\lambda}(w)|/n^{1+k}$ exists and can be expressed as an appropriate Riemann sum.

Now we show (\ref{eq:intersection}).
Without loss of generality assume $\lambda_1$ is a $C_1$ constraint and $\lambda_2$ is a $C_2$ constraint. For any $\pi\in \Pi^*_{C,\lambda_1}(w)\bigcap \Pi^*_{C,\lambda_2}(w)$ we note that for $i\in S$,

$$(\pi(j_i),\pi(j_i-1))=(\pi(i-1),\pi(i))=(\pi(j_i-1),\pi(j_i)),$$ which implies $\pi(i)=\pi(i-1)$. Now  $i$ is a generating vertex. But $\pi(i)=\pi(i-1)$ and hence is fixed, having chosen the first $i-1$  vertices. This lowers the order by a power of $n$ and hence the claim follows.
\vskip5pt

\noindent
(vii) Wigner and other matrices: Since the other cases such as Wigner and Toeplitz and Wigner and Reverse Circulant follow by similar and repetitive arguments we refrain from presenting a proof for them.
\subsection{Proof of Theorem~\ref{theorem:catalan}}
Let $w$ be a colored word of length $2k$ for a monomial $q=q(X,Y)$. Let $w'$ be
obtained from $w$ by a cyclic permutation, that is, there exists $l$ such that
$w'[i]=w[(i+l)\text{ mod}~ 2k]$. Note that $w'$ is a colored word for the monomial $q'$ obtained from $q$ by the same cyclic permutation. We have the following lemma.
\begin{lemma}\label{cyclic}
$|\Pi_C^*(w)|=|\Pi_C^*(w')|$ and $p_C(w)=p_C(w')$.
\end{lemma}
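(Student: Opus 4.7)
The plan is to construct an explicit bijection $\Phi_l: \Pi_C^*(w) \to \Pi_C^*(w')$ given by cyclic shift of circuits and to verify that it preserves the defining matching condition. Since the defining identity $\pi(0)=\pi(2k)$ allows us to regard any circuit as a function on $\mathbb Z/2k\mathbb Z$, define $\pi'(i):= \pi((i+l)\bmod 2k)$ for $i=0,1,\ldots,2k$, where we use the convention $\pi(2k)=\pi(0)$ to handle the wraparound. Then $\pi'(0)=\pi(l)=\pi'(2k)$, so $\pi'$ is again a circuit of length $2k$; the shift by $-l$ gives the inverse, so $\pi\mapsto \pi'$ is a bijection on the set of all circuits of length $2k$ from $\{0,\ldots,2k\}$ to $\{1,\ldots,n\}$ satisfying $\pi(0)=\pi(2k)$.

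Next I would verify that $\pi'\in \Pi_C^*(w')$ if and only if $\pi\in \Pi_C^*(w)$. Because $w'[i]=w[(i+l)\bmod 2k]$, the color at position $i$ of $w'$ satisfies $c'_i=c_{(i+l)\bmod 2k}$, and the pair-partition encoded by $w'$ is precisely the cyclic shift by $l$ of that encoded by $w$: $w'[i]=w'[j]\Leftrightarrow w[(i+l)\bmod 2k]=w[(j+l)\bmod 2k]$. A short case analysis, treating separately $i+l\le 2k$ and $i+l>2k$ and using $\pi(2k)=\pi(0)$, shows that the $L$-value of $\pi'$ at position $i$, namely $L_{c'_i}(\pi'(i-1),\pi'(i))$, equals the $L$-value of $\pi$ at position $(i+l)\bmod 2k$ (read cyclically). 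Hence the condition defining $\Pi_C^*(w')$ for $\pi'$ is exactly the cyclic relabelling of the condition defining $\Pi_C^*(w)$ for $\pi$, and the two are equivalent.

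This bijection immediately gives $|\Pi_C^*(w)|=|\Pi_C^*(w')|$. Dividing by $n^{1+k}$ and letting $n\to\infty$ (the limits exist for the patterned matrices considered, by Theorem~\ref{main theorem-2}) yields $p_C(w)=p_C(w')$. The only point requiring care is the bookkeeping around the wraparound index $i+l\equiv 0\pmod{2k}$, but this is automatic from the identification $\pi(2k)=\pi(0)$; there is no genuine obstacle beyond this notational verification.
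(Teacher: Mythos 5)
Your proposal is correct and is essentially the paper's own argument: both define $\pi'(i)=\pi((i+l)\bmod 2k)$, observe that the $L$-value matching condition is carried along under the shift, and conclude that the map is a bijection between $\Pi_C^*(w)$ and $\Pi_C^*(w')$, from which both claims follow. The only cosmetic difference is that you spell out the wraparound bookkeeping a bit more explicitly, which the paper leaves implicit.
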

\begin{proof}[Proof of Lemma~\ref{cyclic}]
Let $\pi \in \Pi_C^*(w)$.
Let $\pi '(i)=\pi((i+l)\text{mod}~ 2k))$. Clearly, $\pi '(0)=\pi '(2k)$. Also
$$w'[i]=w'[j]\Rightarrow L^*(\pi'(i-1),\pi'(i))= L^*(\pi'(j-1),\pi'(j))$$
where $L^*$ is equal to $L_1$ or $L_2$ according as $w'[i]=w'[j]$ is an  $X$ match or a
$Y$ match. Hence, $\pi' \in \Pi_C^*(w').$

As $w$ can also be obtained from $w'$ by another cyclic permutation, it follows that the map $\pi \rightarrow \pi '$ is a bijection between $\Pi_C^*(w)$ and $\Pi_C^*(w')$. Hence $|\Pi_C^*(w)|=|\Pi_C^*(w')|$ and $p_C(w)=p_C(w')$.
\end{proof}
\begin{proof}[Proof of Theorem~\ref{theorem:catalan}](i) We  use induction on the length of the word.

If $k=1$ then $q=XX$ or $q=YY$. The only colored
Catalan word is $aa$ (drop superscript for ease). In either case,
$\pi(0)=i,\pi(1)=j,\pi(2)=i$ is a circuit in $\Pi_C^*(w)$ for$1\leq
i\leq n,1\leq j\leq n$. Hence, $|\Pi_C^*(w)|\geq n^{2}$ and the
result is true for $k=1$.

Now let us assume that the claim holds for all monomials $q$ of
length less than $2k$ and all Catalan words corresponding to $q$. By
Lemma \ref{cyclic}, without loss of generality we
assume that $w=aaw_1$ where $w_1$ is a Catalan word of length
$(2k-2)$. Now let $\pi '\in \Pi_C^*(w_1)$. For fixed $j$, $1\leq j
\leq n$, define $\pi$ by
\begin{align}
 \pi(0)&=\pi '(0) \label{ind1}\\
\pi(1)&=j \\
\pi(j)&=\pi '(j-2),\qquad j\geq 2\label{ind2}.
\end{align}
Clearly $\pi$ is a circuit and $\pi(0)=\pi(2)$ implies
$L(\pi(0),\pi(1))=L(\pi(1),\pi(2))$. Hence $\pi \in |\Pi_C^*(w)|$
and so, $|\Pi_C^*(w)|\geq n|\Pi_C^*(w_1)|\geq n^{k+1}$ and hence (i)  is proved.\vskip5pt

\noindent
(ii) We shall now show that $p_C(w)\leq 1$ for matrices
whose link functions satisfy \textit{Property B} and \textit{Property P}.
The proof is same as the proof of Theorem 2(ii) of \citet{bana:bose}, with appropriate changes to add color and index. We indicate the changes while keeping the notation similar to theirs for easy comparison.
The proof uses $(2k+1)$-tuple $\pi$ which are not necessarily circuit, that is, $\pi(0)=\pi(2k)$ is not assumed. Let $w$ be a colored Catalan word.
Define\begin{eqnarray*}
 C'(w)&=&\{\pi:  \ w[i]=w[j] \Rightarrow c_i=c_j \text{ and } L_{c_i}(\pi(i-1),\pi(i))=L_{c_j}(\pi(j-1),\pi(j))\}\\
 \Gamma_{i,j}(w)&=&\{\pi \in C(w): \pi(0)=i, \pi(2k)=j\}, \ (1 \le i,j \le n), \ \ \gamma_{i,j}(w)=|\Gamma_{i,j}(w)|.
\end{eqnarray*}
Clearly, $|\Pi_C^*(w)|=\sum_{i=1}^n\gamma_{i,i}(w)$. Now consider the  following statement
$\mathbf S_k'$ for all $k\geq 1$:
\vskip5pt

 $\mathbf S_k'$: For any colored Catalan $w$ of length $(2k)$, there exists $M_k>0$ such that
 $$\gamma_{i,j}(w) \le M_kn^{k-1} \ \ \text{for all} \ \  i \neq j\ \ \text{and}\ \
\frac{1}{n}\sum_{i=1}^n\left|\frac{\gamma_{i,i}(w)}{n^k}-1\right|=O(1/n).$$
 The proof of $\mathbf S_k'$ easily follows by repeating the steps of the proof of Theorem 2(ii) of \citet{bana:bose} and changing the  set $C(w)$  there by $C'(w)$ and using \textit{Property B}  and \textit{Property P}. To avoid repetitive arguments we skip the details.
Once the validity of $\mathbf S_k'$ is asserted, one gets
$p_C(w)\leq 1$ and the result now follows using part(i).
\end{proof}
\subsection{Proof of Theorem \ref{theorem:free1}}
We need the following development for describing freeness.

Let $S_n$ be the group of permutations of $(1, 2, \ldots n)$.
\begin{definition} \label{def:multilinear}Let $\mathcal A$ be an algebra. Let  $\psi_k:{\mathcal A}^k\longrightarrow C\,\  k>0$
be  multi linear functions. For $\alpha\in S_n$, let $c_1,c_2,\ldots
c_r$ be the cycles of $\alpha$. Then define
$$\psi_{\alpha}[A_1,A_2,\ldots,A_n]=\,\,\,\psi_{c_1}[A_1,A_2,\ldots,A_n]\psi_{c_2}[A_1,A_2,\ldots,A_n]\ldots \psi_{c_r}[A_1,A_2,\ldots,A_n]$$
where $$\psi_{c}[A_1,A_2,\ldots,A_n]=\psi_{p}\left(A_{i_1}A_{i_2}\ldots
A_{i_p}\right) \ \ \text{if}\ \ c=(i_1,i_2\ldots i_p).$$
\end{definition}
Freeness is intimately tied to non-crossing partitions.  We describe
the relevant portion of this relation in brief below. See Theorem
14.4 of \citet{spei} for more details. Let $NC_2(m)$ be the set of non-crossing pair-partitions of $\{1,2,\ldots,m\}$. A typical pair-partition $\pi$ will be written in the form $\{(r,\pi(r)), ~ r=1,2,\ldots,m\}$. For
$p=(p(1),p(2),\ldots,p(m))$ integers (also can be referred to as colors), let
$$NC_2^{(p)}(m)=\{\pi\in NC_2(m): p(\pi(r))=p(r) \text{ for all }
r=1,\ldots,m\}.$$ Suppose $d_1,d_2,\ldots, d_m,s_1,s_2,\ldots s_m$
are elements in some non-commutative probability space $(\mathcal
B,\varphi)$. Suppose $\{s_1,s_2,\ldots s_m\}$ are free and  each $s_i$
follows the semicircular law. Then the collections $\{s_1,s_2,\ldots s_m\}$ and
$\{d_1,d_2,\ldots d_m\}$ are free if and only if,
\begin{eqnarray}\label{free:description}\varphi(s_{p(1)}d_1\ldots s_{p(m)}d_m)&=&\sum_{\pi \in\,NC(m)}{k_{\pi}[s_{p(1)},\ldots s_{p(m)}].\,\varphi_{\pi\gamma}[d_1,\ldots, d_m]}\nonumber\\
&=&\sum_{\pi \in\,NC_2^{(p)}(m)}\varphi_{\pi\gamma}[d_1,\ldots , d_m],\end{eqnarray}
where $\gamma\in S_m$ is the cyclic permutation with one cycle and $\gamma=(1,2,\ldots,m-1,m)$. Here $k_n$ denotes the free cumulants and $k_\pi$ for a partition $\pi$ is defined along the same lines as  Definition~\ref{def:multilinear}.

We shall also drop the suffix $C$ from $p_C(w)$, $\Pi_C(w)$, $\Pi_C^*(w)$ etc. for simplicity. Fix a monomial $q$ of Wigner ($W$) and any other patterned matrix ($A$) of length $2k$.
To prove freeness we show that the limiting variables satisfy the relation~\eqref{free:description}. We have already remarked that freeness is intimately tied to non-crossing partitions but freeness in the limit can also be roughly described in terms of colored words in the following manner.
\begin{enumerate}
 \item If for a colored word the pair-partitions corresponding to the Wigner matrix cross, then $p(w)=0$.
\item If the pair-partition corresponding to the letters of matrix $A$ cross with any pair-partition of $W$ then also $p(w)=0$.
\end{enumerate}
For example, $p(w_1w_2w_1w_2a_1a_1)=0$ and $p(w_1a_1w_1a_1)=0$. This is essentially the main content of Lemma~\ref{lemma:strong} given below.

We will discuss in detail the proof of Theorem~\ref{theorem:free1} for $p=1$ and indicate how the results continue to hold for $p\geq 1$.

We need a few preliminary Lemmata to prove the main result. We first use these Lemmata to prove Theorem~\ref{theorem:free1} and then provide the proofs of the Lemmata.

 We now concentrate only on (colored) pair-matched words. For a word $w$ the pair $(i,j)\, 1\leq i<j\leq 2k$ is said to be a match if $w[i]=w[j]$. A match $(i,j)$ is said to be a $W$ match or an $A$ match according as $w[i]=w[j]$ is Wigner or $A$ letter.

 We define $w_{(i,j)}$ to be the word of length $j-i+1$ as $$w_{(i,j)}[k]=w[i-1+k] \text{ for all $1\leq k \leq j-i+1$.}$$

Let $w_{(i,j)^c}$  be the word of length $t+i-j-1$ obtained by
removing  $w_{(i,j)}$ from $w$, that is,
$$w_{(i,j)^c}[r]=\begin{cases}
w[r] & \text{if } r < i,\\
w[r+j-i+1] & \text{if } r \geq i.
\end{cases}$$
Note that in general these subwords may not be matched. If $(i,j)$ is a $W$ match, we will call $w_{(i,j)}$ a \textit{Wigner
string} of length $(j-i+1)$.  For instance, for the monomial
$WAAAAWWW$, $w=abbccadd$ is a word and $abbcca$ and $dd$ are Wigner
strings of length six and two respectively. For any word $w$,  we
define the following two classes:
\begin{eqnarray}\Pi^*_{(C2)}(w)&=&\{\pi \in\Pi^*(w): (i,j)~
W~\text{match} \Rightarrow (\pi(i-1),\pi(i))=(\pi(j),\pi(j-1)) \}, \\
\Pi^*_{(i,j)}(w)&=&\{\pi \in \Pi^*(w): (\pi(i-1),\pi(i))=
(\pi(j),\pi(j-1))\}.
\end{eqnarray}
Note that the condition appearing above involves $C2$ constraint defined in~\eqref{eq:c1:c2} and \begin{equation}\label{eq:ij}\Pi_{(C2)}^*(w)=\bigcap_{(i,j):\,W match}\Pi^*_{(i,j)}(w).\end{equation} It is well known that if we have a collection of only Wigner matrices then $p(w)\neq 0$ if and only if all the constraints in the word are $C2$ constraints. See for example~\citet{bosesen}. We need the following crucial extension in the present setup.
\begin{lemma}\label{lemma:strong} For a colored pair-matched word $w$ of length $2k$ with $p(w)\neq 0$ we
have:\vskip5pt

 \noindent (a) Every Wigner string is a colored pair-matched word; \vskip5pt

\noindent (b) For any $(i,j)$ which is a $W$ match we have
\begin{equation}\label{eq:red:wstring}
\lim_{n\longrightarrow\infty}\frac{|\Pi^*(w)-\Pi_{(i,j)}^*(w)|}{n^{1+k}}=0.
\end{equation}
\noindent
(c)\begin{equation}\label{eq:c2}\lim_{n\longrightarrow\infty}\frac{|\Pi^*(w)-\Pi_{(C2)}^*(w)|}{n^{1+k}}=0.\end{equation}
\end{lemma}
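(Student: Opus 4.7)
The plan is to prove the three parts in the order (b), (c), (a). The common machinery is the labeling / vertex-counting decomposition from the proof of Lemma~\ref{lemma:theorem-2}: one writes $|\Pi^*(w)|=\sum_l |\Pi^*_{C,l}(w)|$, where $l$ ranges over the finite set of admissible constraint-type choices at each pair match (for a Wigner match, $C1$ or $C2$; for matches of type $T$, $H$, $RC$, or $SC$, the explicit families enumerated there). For each $l$, $n^{-(1+k)}|\Pi^*_{C,l}(w)|$ converges to a Riemann integral whose value is positive iff the affine system imposed by $l$ leaves exactly $k+1$ free vertices among the generating vertex set $S$.

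\textbf{Part (b).} Fix a $W$-match $(i,j)$. The Wigner pair condition $\{\pi(i-1),\pi(i)\}=\{\pi(j-1),\pi(j)\}$ is equivalent to $C1\cup C2$ at $(i,j)$; since $\Pi^*_{(i,j)}(w)$ imposes $C2$, the difference $\Pi^*(w)\setminus\Pi^*_{(i,j)}(w)$ consists of circuits satisfying $C1$ but not $C2$, equivalently $C1$ together with $\pi(i-1)\neq\pi(i)$. By hypothesis $p(w)\neq 0$, so at least one labeling $l^*$ yields the maximal $k+1$ free vertices. I would argue that every such $l^*$ uses $C2$ at $(i,j)$: viewing constraints as edges on positions $\{0,1,\ldots,2k\}$ augmented with the closure edge $0\sim 2k$, the $C2$ edges $\{i-1,j\}$, $\{i,j-1\}$ reverse orientation relative to the circuit walk, while the $C1$ edges $\{i-1,j-1\}$, $\{i,j\}$ preserve it. Swapping $C2\to C1$ at $(i,j)$ in any labeling $l^*$ always creates a redundant edge in the constraint graph (equivalently, reduces the number of connected components by at least one), and hence drops the free-vertex count below $k+1$. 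Every strict-$C1$ labeling thus contributes at most $O(n^k)$, and summing over the finitely many such labelings yields (b).

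\textbf{Part (c)} follows immediately from (b): by~\eqref{eq:ij}, $\Pi^*_{(C2)}(w)=\bigcap_{(i,j)\,W\text{-match}} \Pi^*_{(i,j)}(w)$, whence a union bound over the at most $k$ Wigner matches gives $|\Pi^*(w)\setminus \Pi^*_{(C2)}(w)|=o(n^{1+k})$. \textbf{Part (a)} combines (b) with a crossing-exclusion argument: suppose for contradiction that $(i,j)$ is a $W$-match but some $w[r]$ with $i<r<j$ is paired with $w[s]$, $s\notin[i,j]$. By (b), up to an $o(n^{1+k})$ set we may restrict to $\pi$ with $C2$ at $(i,j)$, giving $\pi(i-1)=\pi(j)$ and $\pi(i)=\pi(j-1)$. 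These identifications glue the inner positions $\{i,\ldots,j-1\}$ to the outer positions $\{0,\ldots,i-1,j,\ldots,2k\}$ only through the single shared vertex $\pi(i-1)=\pi(j)$, so without any crossing the total free-vertex count equals the sum of the inner and outer free-vertex counts, minus one for the shared vertex. The crossing pair $(r,s)$, however, forces an extra identification between an inner vertex ($\pi(r-1)$ or $\pi(r)$) and an outer vertex ($\pi(s-1)$ or $\pi(s)$) that is transverse to this factorization and therefore drops the total by one more, yielding $|\Pi^*(w)|=O(n^k)$ and contradicting $p(w)\neq 0$.

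The main obstacle is the graph-theoretic step inside part (b), which must be verified uniformly as $A$ ranges over $T$, $H$, $RC$, $SC$. Each choice of $A$ produces its own family of affine identifications — signed differences, signed sums, modular offsets, or the six-case split for the Symmetric Circulant — and the argument requires checking in every case that the $C2$ edges at $W$-matches are precisely the ones compatible with a spanning-tree structure achieving the maximal $k+1$ free vertices, so that any $C2\to C1$ swap genuinely introduces a redundancy. The case analysis is routine for each fixed type but needs to be carried out carefully for all four patterns (and their mixtures) to cover the scope of Theorem~\ref{theorem:free1}.
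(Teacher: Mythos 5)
Your overall route is genuinely different from the paper's: you try to prove (b) first as a standalone statement about constraint labelings, then deduce (c) by a union bound and (a) by a crossing-exclusion argument, whereas the paper proves (a), (b), (c) \emph{simultaneously} by induction on the length of the Wigner string, using the factorization Lemma~\ref{lemma:C2} to strip off inner Wigner strings and a telescoping-sum analysis of the $A$-letters sandwiched between Wigner letters. The problem is that the central step of your plan is exactly the part you defer. The claim that swapping $C2\to C1$ at a $W$-match ``always creates a redundant edge in the constraint graph'' is the standard orientation argument for \emph{pure} Wigner words, where it follows from the identity $\sum_{i}(\pi(i)-\pi(i-1))=0$ and the fact that $C2$-matched increments cancel in pairs. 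For mixed monomials the increments attached to $T$, $H$, $RC$, $SC$ matches do not cancel in this way (Hankel and Reverse Circulant constrain sums, not differences; Symmetric Circulant splits into six affine cases), so whether a strict-$C1$ labeling genuinely loses a degree of freedom depends on the interaction between the Wigner constraints and the $A$-constraints. This is precisely what the paper's Case II establishes via the telescoping identities \eqref{equation:Toeplitz} and \eqref{equation:Hankel}: the segment of $A$-letters trapped between consecutive Wigner letters forces a nontrivial linear relation among the free increments unless that segment is internally pair-matched and the boundary vertices coincide, which simultaneously yields the $C2$ constraint and the pair-matching of the string. Declaring this verification ``routine'' does not discharge it; it is the content of the lemma.

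A second, related gap is in your part (a). You treat the $A$-match between an inner position $r$ and an outer position $s$ as ``an extra identification between an inner vertex and an outer vertex,'' but for $T$, $H$, $RC$, $SC$ it is a single affine relation on $L$-values (e.g.\ $\pi(r-1)+\pi(r)=\pi(s-1)+\pi(s)$), not a vertex identification, and whether it is independent of the constraints already imposed is again the delicate point. Moreover your count ``inner free vertices plus outer free vertices minus one'' presupposes that the inner and outer subwords have well-defined free-vertex counts, i.e.\ that they are separately matched, which is what you are trying to prove. The paper avoids this circularity by running the induction on Wigner-string length: Case I handles nested Wigner strings via Lemma~\ref{lemma:C2}, and Case II shows directly that in an innermost Wigner string the $A$-letters must pair among themselves (so no crossing out of the string is possible) as a byproduct of the same telescoping argument that forces the $C2$ constraint. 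I would recommend restructuring your proof along these lines, or at minimum carrying out the rank computation for a strict-$C1$ labeling in each of the four mixed cases rather than asserting it.
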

Note that $(c)$ and $(b)$ are equivalent by~\eqref{eq:ij} and as the number of pairs $(i,j)$ is finite.
\begin{lemma} \label{lemma:trace:product} Suppose  $X_n$ has LSD  and  they satisfy Assumption~\ref{A1} and~\ref{A2}, then for any  $l\ge 1$ and integers $(k_1,k_2,\ldots, k_l)$, we have
$$\E\left[\prod_{i=1}^{l}(\frac1n\Tr(\left(\frac{X_n}{\sqrt n}\right)^{k_i}))\right]-\prod_{i=1}^{l}{\E\left[\frac1n\Tr (\left(\frac{X_n}{\sqrt n}\right)^{k_i})\right]}
\to0 \text{ as } n\to\infty.$$
\end{lemma}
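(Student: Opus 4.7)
The plan is to combine almost sure convergence of each individual normalized trace with uniform integrability, upgrading the a.s.\ convergence to $L^1$-convergence of the product. Set $T_i := \frac{1}{n}\Tr\bigl((X_n/\sqrt n)^{k_i}\bigr)$, and let $m_{k_i}$ denote the $k_i$-th moment of the LSD.

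The first step gives almost sure convergence of each $T_i$. Because $X_n$ has an LSD and satisfies~\eqref{A2}, the expected empirical moments $\E[T_i]$ converge to $m_{k_i}$ (moment convergence follows from the uniform boundedness of expected traces, ensured by Property~B and~\eqref{A2}). The concentration argument from the proof of Theorem~\ref{main theorem-1}(3) — namely the bound $\E[(T_i-\E T_i)^4]=O(n^{-2})$, obtained from the circuit expansion and the Property~B based counting of jointly matched, cross matched four-tuples of circuits — uses only Assumptions~\eqref{A1} and~\eqref{A2} and so applies here verbatim. Borel--Cantelli then yields $T_i\to m_{k_i}$ almost surely, and hence $\prod_{i=1}^{l} T_i\to \prod_{i=1}^{l} m_{k_i}$ almost surely.

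The second step produces an $L^2$ bound, hence uniform integrability. Expanding via the trace formula,
$$\E[T_i^{2l}]=\frac{1}{n^{2l+l k_i}}\sum_{(\pi_1,\ldots,\pi_{2l})}\E\Bigl[\prod_{j=1}^{2l}\mathbf{Z}_{\pi_j}\Bigr],$$
the only non-vanishing terms are those whose $2l$-tuple of circuits is jointly matched (each $L$-value across all circuits appears at least twice). A standard multi-circuit volume estimate based on Property~B — the direct analogue of the counting used in the proof of Theorem~\ref{main theorem-1}(2) — bounds the number of such tuples by $O(n^{2l+l k_i})$, while~\eqref{A2} bounds each inner expectation uniformly. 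Thus $\sup_n \E[T_i^{2l}]<\infty$, and by H\"older $\sup_n \E\bigl[\bigl(\prod_i T_i\bigr)^2\bigr]<\infty$, giving uniform integrability of $\prod_i T_i$.

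Combining the two steps, a.s.\ convergence together with uniform integrability yields $\E[\prod_i T_i]\to \prod_i m_{k_i}$, while the same reasoning applied to a single factor gives $\prod_i \E[T_i]\to \prod_i m_{k_i}$; subtracting proves the claim. The main technical input is the multi-circuit volume estimate in Step~2, which is a routine extension of the single-circuit counting used throughout Section~\ref{sec:proof}. No cross-matching refinement is required here since only $L^2$-boundedness is needed rather than variance decay, and the subtler cross-matching bound necessary for Step~1 is already supplied by the fourth-moment computation in the proof of Theorem~\ref{main theorem-1}(3).
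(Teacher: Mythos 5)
Your proof is correct, but it follows a genuinely different route from the paper's. The paper works entirely at the level of moments: it first shows the quantitative bound $\E\bigl[\prod_{i=1}^{l}\bigl(T_i-\E[T_i]\bigr)\bigr]=\bigo(1/n)$, where $T_i=\frac1n\Tr\bigl((X_n/\sqrt n)^{k_i}\bigr)$, by expanding into $l$-tuples of circuits, discarding tuples that are not jointly matched or in which some circuit is only self matched, and counting the remaining jointly matched, cross matched tuples via Property B (losing one degree of freedom from the cross match); it then converts this covariance-decay statement into the product statement by an inclusion--exclusion induction on $l$. You instead argue probabilistically: almost sure convergence of each $T_i$ (reusing the fourth-moment concentration from the proof of Theorem~\ref{main theorem-1}(3), which indeed depends only on Assumptions~\eqref{A1} and~\eqref{A2}), plus uniform integrability of $\prod_i T_i$ from the $L^{2l}$ bound $\sup_n\E[T_i^{2l}]<\infty$ and H\"older, and then Vitali's theorem. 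Your key new technical input, $\sup_n\E[T_i^{2l}]<\infty$, is a correct and routine consequence of the jointly-matched circuit count ($n^{2l}$ initial vertices times at most $n^{lk_i}$ choices for the distinct $L$-values, the rest controlled by $\Delta$), and you are right that no cross-matching refinement is needed for mere boundedness. The trade-offs: the paper's argument is a self-contained moment computation that yields an explicit $\bigo(1/n)$ rate for the centered product and never invokes almost sure convergence; yours is softer and avoids the inclusion--exclusion induction, at the cost of giving no rate and of leaning on the concentration and moment-convergence facts (the passage from ``$X_n$ has LSD'' to $\E[T_i]\to m_{k_i}$ deserves a sentence, though it is exactly how the paper itself uses the hypothesis in its final display). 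Both arguments ultimately rest on the same Property-B volume estimates, so neither is more elementary than the other.
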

Assuming the above lemmas we now prove Theorem~\ref{theorem:free1}.
\begin{proof}[Proof of Theorem~\ref{theorem:free1}]
 We take a single copy of $W$ and $A$ to show the result but for multiple copies the proof essentially remains same modulo some notations.
Let $q$ be a typical monomial, $q=WA^{q(1)}WA^{q(2)}\ldots
WA^{q(m)}$ of length $2k$, where the $q(i)$'s may  equal $0$. So,
$k=m/2+(q(1)+q(2)+\ldots+q(m))/2$. From Theorem~\ref{main theorem-2}, for every such monomial $q$, $\frac1{n^{k+1}}\Tr(q)$ converges to say $\varphi(sa^{q(1)}\ldots sa^{q(m)})$, where $s$ follows the semicircular law and $a$ is the marginal limit of $A$, and $\varphi$ is the appropriate functional defined on the space of non-commutative polynomial algebra generated by $a$ and $s$. It is enough to prove that $\varphi$ satisfies~\eqref{free:description}.

Let us expand the expression for
\begin{align}
&\lim_{n\rightarrow\infty}\frac1{n^{1+k}}\E[\Tr(WA^{q(1)}WA^{q(2)}\ldots WA^{q(m)})] \nonumber&\\
&=\displaystyle{\lim_{n\rightarrow\infty}\frac{1}{n^{1+k}}\sum_{\substack{i(1),i(2),\ldots i(m)\\j(1),j(2),\ldots j(m)=1}}^{n}} \E[w_{i(1)j(1)}a^{q(1)}_{j(1)i(2)}w_{i(2)j(2)}a^{q(2)}_{j(2)i(3)}\ldots w_{i(m)j(m)}a^{q(m)}_{j(m)i(1)}] \label{eq:line1}&\\
&=\lim_{n\rightarrow\infty}\frac{1}{n^{1+k}}\sum_{w\in CW(2)}\sum_{\pi\in\Pi^*(w)}\E[\mathbb X_\pi] \nonumber&\\
&=\lim_{n\rightarrow\infty}\frac{1}{n^{1+k}} \sum_{w\in CW(2)}\sum_{\pi\in\Pi_{(C2)}^*(w)}\E[\mathbb X_\pi]\text{ (by Lemma~\ref{lemma:strong} (c) and Assumption~\eqref{A2})}.\label{eq:line2}
\end{align}
Colored pair-matched words of length $2k$ are in bijection with the set of pair-partitions on $\{1,2,\ldots,2k\}$ (denoted by $\mathcal P_2(2k)$).
 Now each such word $w$ induces $\sigma_{w}$ a pair-partition of  $\{1,2,\ldots m\}$ that is induced by only the Wigner matches i.e $(a,b)\,\in\,\sigma_w$ iff $(a,b)$ is a Wigner match. So given any  pair-partition $\sigma$ of $\{1,2,\ldots,m\}$, we denote by $[\sigma]_W$ the class of all $w$ which induce the partition $\sigma$. So the  sum in~\eqref{eq:line2} can be written as,
\begin{equation}\label{eq:line3}\lim_{n\to\infty}\frac{1}{n^{1+k}}\sum_{\sigma\in \mathcal P_2(m)}\sum_{w\in[\sigma]_W}\sum_{\pi\in\Pi_{(C2)}^*(w)}E[\mathbb X_\pi].\end{equation}
By $C2$ constraint imposed on the class $\Pi_{(C2)}^*(w)$, if $(r,s)$ is a $W$ match then $(i(r),j(r))=(j(s),i(s))$ (or, equivalently in terms of $\pi$ we have, $(\pi(r-1),\pi(r))=(\pi(s),\pi(s-1))$).

Therefore, we have the following string of equalities. Let $\tr$ be the normalized trace. The equality in~\eqref{eq:line4} follows from~\eqref{eq:line1} and~\eqref{eq:line2}. The steps in~\eqref{eq:line5},~\eqref{eq:line6} and~\eqref{eq:line7} follow easily from calculations similar to Proposition 22.32 of  \citet{spei}. The last step follows from the fact that the number of cycles of $\sigma\gamma$ is equal to $1+m/2$ if and only if $\sigma\in NC_2(m)$. The notation $\tr_{\sigma\gamma}$ is given in Definition~\ref{def:multilinear}.
\begin{align}
&\lim_{n\rightarrow\infty}\frac1{n^{k+1}}\E[\Tr(WA^{q(1)}WA^{q(2)}\ldots WA^{q(m)})]\nonumber&\\
&=\lim_{n\rightarrow\infty}\frac1{n^{k+1}}\sum_{\sigma\in \mathcal P_2(m)}\sum_{\substack{i(1),i(2),\ldots i(m)\\j(1),j(2),\ldots j(m)=1}}^n\prod_{(r,s)\in \sigma}\delta_{i(r)j(s)}\delta_{i(s)j(r)}\E[a^{q(1)}_{j(1)i(2)}\ldots a^{q(m)}_{j(m)i(1)}]\label{eq:line4}\\
&=\lim_{n\rightarrow\infty}\frac1{n^{k+1}}\sum_{\sigma\in \mathcal P_2(m)}\sum_{\substack{i(1),i(2),\ldots i(m)\\j(1),j(2),\ldots j(m)=1}}^n\prod_{(r,s)\in \sigma}\delta_{i(r)j(s)}\delta_{i(s)j(r)}\E[a^{q(1)}_{j(1)i(\gamma(1))}\ldots a^{q(m)}_{j(m)i(\gamma(m))}]\label{eq:line5}\\
&=\lim_{n\rightarrow\infty}\frac1{n^{k+1}}\sum_{\sigma\in \mathcal P_2(m)}\sum_{\substack{i(1),i(2),\ldots i(m)\\j(1),j(2),\ldots j(m)=1}}^n\prod_{r=1}^m\delta_{i(r)j(\sigma(r))}\E[a^{q(1)}_{j(1)i(\gamma(1))}\ldots a^{q(m)}_{j(m)i(\gamma(m))}]\label{eq:line6}\\
&=\lim_{n\rightarrow\infty}\frac1{n^{k+1}}\sum_{\sigma\in \mathcal P_2(m)}\sum_{j(1),j(2),\ldots j(m)=1}^n\E[a^{q(1)}_{j(1)j(\sigma\gamma(1))}\ldots a^{q(m)}_{j(m)j(\sigma\gamma(m))}]\label{eq:line7}\\
&=\sum_{\sigma\in NC_2(m)}\lim_{n\rightarrow\infty}\E\left(\tr_{\sigma\gamma}[A^{(q_1)},A^{(q_2)},\ldots ,A^{(q_m)})]\right). \nonumber
\end{align}
 Now it follows from Lemma~\ref{lemma:trace:product} that,
\begin{align*}\sum_{\sigma\in NC_2(m)}\lim_{n\rightarrow\infty}\E\left(\tr_{\sigma\gamma}[A^{(q_1)},A^{(q_2)},\ldots ,A^{(q_m)})]\right)&=\sum_{\sigma\in NC_2(m)}\lim_{n\rightarrow\infty}(\E\tr)_{\sigma\gamma}[A^{(q_1)},A^{(q_2)},\ldots ,A^{(q_m)})]\\
&=\sum_{\sigma\in NC_2(m)}\varphi_{\sigma\gamma}[a^{(q_1)},a^{(q_2)},\ldots ,a^{(q_m)})].\\
\end{align*}
This shows~\ref{free:description} and hence freeness in the limit.

The above method can be easily extended to plug in more independent copies of $W$ and $A$. The following details will be necessary.
\begin{enumerate}
 \item The extension of Lemmata~\ref{lemma:strong} and~\ref{lemma:trace:product}. Note that these extensions can be easily obtained using the injective mapping $\psi$ described in Section~\ref{main results} and used in Theorem~\ref{main theorem-1}.
\item When we consider several independent copies of the Wigner matrix the product in~\eqref{eq:line6} gets replaced by $$\prod_{r=1}^m\delta_{i(r)j(\sigma(r))}\delta_{p(r)p(\sigma(r))}.$$ Here $(p(1),p(2),\ldots,p(m))$ denotes the colors corresponding to the independent Wigner matrices. The calculations are similar to Theorem 22.35 of~\citet{spei}.
\end{enumerate}
The rest are some algebraic details, which we skip.
\end{proof}
Having proved the Theorem we now come back to the proof of Lemma \ref{lemma:strong} and \ref{lemma:trace:product}.
%
The next Lemma turns out to be the most essential ingredient in proving Lemma~\ref{lemma:strong} and it points out the behavior of a colored pair-matched word which contains a Wigner string inside it.
\begin{lemma}\label{lemma:C2}For any colored pair-matched word $w$ and a Wigner string $w_{(i,j)}$ which is a pair-matched word and satisfies equation~\eqref{eq:red:wstring}
\begin{equation}\label{eq:lemma:pw}p(w)=p(w_{(i,j)})p(w_{(i,j)^c}).\end{equation}
Further, if $w_{(i+1,j-1)}$ and $w_{(i,j)^c}$ satisfy~\eqref{eq:c2} then so does $w$.
\end{lemma}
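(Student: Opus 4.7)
For part (a), the idea is to first use the hypothesis that $w_{(i,j)}$ satisfies~\eqref{eq:red:wstring} to (i) replace $|\Pi^*(w)|$ by $|\Pi^*_{(i,j)}(w)|$ in the limit and (ii) identify $p(w_{(i,j)})$ with $\lim |\Pi^*_{(1,\ell)}(w_{(i,j)})|/n^{1+\ell/2}$, where $\ell := j-i+1$. With $C2$ imposed at $(i,j)$, we have $\pi(i-1)=\pi(j)=:u$ and $\pi(i)=\pi(j-1)$; and because $w_{(i,j)}$ is pair-matched internally, every other match of $w$ lies either strictly inside the Wigner string or strictly outside. Each $\pi \in \Pi^*_{(i,j)}(w)$ then decomposes canonically into a circuit $\pi_1 \in \Pi^*_{(1,\ell)}(w_{(i,j)})$ and a circuit $\pi_2 \in \Pi^*(w_{(i,j)^c})$, glued only at $u=\pi_1(0)=\pi_2(i-1)$.

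The key combinatorial observation is that the inside count $I(u) := |\{\pi_1 \in \Pi^*_{(1,\ell)}(w_{(i,j)}) : \pi_1(0)=u\}|$ is \emph{exactly} independent of $u$: after the outer $C2$ has been imposed (forcing $\pi_1(1)=\pi_1(\ell-1)$), every remaining match constraint of $w_{(i,j)}$ relates two edges all four of whose endpoints lie in $\{\pi_1(1),\ldots,\pi_1(\ell-1)\}$, and so involves neither of the boundary vertices $\pi_1(0)=\pi_1(\ell)=u$. For each of the five link functions this is immediate from the fact that the match constraint $L(\pi_1(r-1),\pi_1(r)) = L(\pi_1(s-1),\pi_1(s))$ is a relation between only those four vertices. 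Hence $I(u)=|\Pi^*(w_{(i+1,j-1)})|=:I$ for every $u$, so
$$|\Pi^*_{(i,j)}(w)| \;=\; I\cdot|\Pi^*(w_{(i,j)^c})| \;=\; \frac{|\Pi^*_{(1,\ell)}(w_{(i,j)})|\cdot|\Pi^*(w_{(i,j)^c})|}{n},$$
and dividing by $n^{1+k}$ and letting $n\to\infty$ yields $p(w)=p(w_{(i,j)})\,p(w_{(i,j)^c})$.

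For part (b), the same decomposition gives $|\Pi^*_{(C2)}(w)| = I^{C2}\cdot|\Pi^*_{(C2)}(w_{(i,j)^c})|$ with $I^{C2} := |\Pi^*_{(C2)}(w_{(i+1,j-1)})|$. Writing
$$|\Pi^*_{(i,j)}(w)|-|\Pi^*_{(C2)}(w)| \;=\; I\bigl[|\Pi^*(w_{(i,j)^c})|-|\Pi^*_{(C2)}(w_{(i,j)^c})|\bigr] + \bigl[I-I^{C2}\bigr]\,|\Pi^*_{(C2)}(w_{(i,j)^c})|,$$
the hypotheses that $w_{(i+1,j-1)}$ and $w_{(i,j)^c}$ satisfy~\eqref{eq:c2} make the two bracketed differences $o(n^{\ell/2})$ and $o(n^{1+k-\ell/2})$ respectively; combined with the trivial bounds $I=O(n^{\ell/2})$ and $|\Pi^*_{(C2)}(w_{(i,j)^c})|=O(n^{1+k-\ell/2})$, each summand is $o(n^{1+k})$. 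Coupling with $|\Pi^*(w)-\Pi^*_{(i,j)}(w)|=o(n^{1+k})$ from~\eqref{eq:red:wstring} then gives $|\Pi^*(w)-\Pi^*_{(C2)}(w)|=o(n^{1+k})$, i.e.\ $w$ satisfies~\eqref{eq:c2}.

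The one step to be careful about is the constancy $I(u)=I$: one really must confirm, for each of the five link functions, that every internal match constraint of a pair-matched Wigner string involves only vertices strictly interior to the string, so that fixing the common boundary vertex $u$ couples to no inside constraint. Once this locality is recorded, the remainder is elementary summation and telescoping, and no Riemann-sum representation of $p$ or link-function-specific case analysis (unlike in the proof of Theorem~\ref{main theorem-2}) is needed.
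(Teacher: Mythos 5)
Your argument is correct and is essentially the paper's own proof: the constancy of $I(u)$ is just a repackaging of the paper's explicit bijection $\Pi^*_{(i,j)}(w)\leftrightarrow \Pi^*(w_{(i+1,j-1)})\times\Pi^*(w_{(i,j)^c})$ together with its identity $|\Pi^*(w_{(i,j)})|=n\,|\Pi^*(w_{(i+1,j-1)})|$, and your telescoping for the $C2$ part matches the paper's multiplication of the two $o(\cdot)$ approximations. The only cosmetic difference is that you invoke the hypothesis~\eqref{eq:red:wstring} a second time to identify $p(w_{(i,j)})$ with $\lim|\Pi^*_{(1,\ell)}(w_{(i,j)})|/n^{1+\ell/2}$, whereas this step only needs the elementary observation that the $C1$ alternative at the boundary match forces a degenerate edge and loses a power of $n$.
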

\begin{proof}
Given any $\pi_1\in \Pi^*(w_{(i+1,j-1)})$ and $\pi_2\in \Pi^*(w_{(i,j)^c})$ construct $\pi$ as:
$$\pi=(\pi_2(0),\ldots \pi_2(i-1),\pi_1(0),\ldots \pi_1(j-i-1)=\pi_1(0),\pi_2(i-1),\ldots(2k-j+i-1)) \in \Pi_{(i,j)}^*(w).$$
Conversely,  from any $\pi \in\Pi_{(i,j)}^*(w)$ one can construct $\pi_1$ and $\pi_2$  by reversing the above construction.

So we have \begin{equation}\label{eq:prod}|\Pi_{(i,j)}^*(w)|=|\Pi^*(w_{(i+1,j-1)})||\Pi^*(w_{(i,j)^c})|.\end{equation} Let $|w_{(i+1,j-1)}|=2l_1$ and $|w_{(i,j)^c}|=2l_2$ and note that $(1+l_1)+(1+l_2)=k+1$.

Now using the fact that $w_{(i,j)}$ satisfies~\eqref{eq:red:wstring} and dividing equation~\eqref{eq:prod} by $n^{k+1}$ we get in the limit, $$p(w)=p(w_{(i+1,j-1)})p(w_{(i,j)}^c).$$ Now we claim that \begin{equation}\label{eq:preduced}|\Pi^*(w_{(i,j)})|= n|\Pi^*(w_{(i+1,j-1)})|.\end{equation}
Now given  $\pi\in \Pi^*(w_{(i,j)})$, one can always get a $\pi'\in \Pi^*(w_{(i+1,j-1)})$, where the $\pi(i-1)$ is arbitrary and hence $\frac{|\Pi^*(w_{(i,j)})|}{n}\leq |\Pi^*(w_{(i+1,j-1)})|$. Also given a $\pi'\in \Pi^*(w_{(i+1,j-1)})$ one can choose $\pi(i-1)$ in $n$ ways and also assign $\pi(j)=\pi(i-1)$ or $\pi(i)$, making $j$ a dependent vertex. So we get that, $ |\Pi^*(w_{(i,j)})|\geq n|\Pi^*(w_{(i+1,j-1)})|$. This shows~\eqref{eq:preduced}. So from~\eqref{eq:preduced} it follows that $$p(w_{(i,j)})=p(w_{(i+1,j-1)}),$$ whenever $w_{(i,j)}$  is a Wigner string.

 Also note that from the first construction, $$|\Pi_{(C2)}^*(w)|=|\Pi_{(C2)}^*(w_{(i+1,j-1)})||\Pi_{(C2)}^*(w_{(i,j)^c})|.$$ Now suppose $w_{(i+1,j-1)}$ and $w_{(i,j)^c}$ satisfy~\eqref{eq:c2}. So we have that $$|\Pi^*(w_{(i+1,j-1)})|=|\Pi_{(C2)}^*(w_{(i+1,j-1)})|+\text{o}(n^{l_1+1}) \text
{ and } |\Pi^*(w_{(i,j)^c})|=|\Pi_{(C2)}^*(w_{(i,j)^c})|+\text{o}(n^{l_2+1}).$$ Multiplying these and using the fact (from~\eqref{eq:lemma:pw}) $|\Pi^*(w)|=|\Pi^*(w_{(i+1,j-1)})||\Pi^*(w_{(i,j)^c})|+\text{o}(n^{k+1})$, the result follows.
\end{proof}
We now give a proof of Lemma~\ref{lemma:strong}.
\begin{proof}[Proof of Lemma~\ref{lemma:strong}]
We use induction on the length $l$ of the Wigner string. Let $w$ be a pair-matched colored word of length $2k$ with $p(w)\neq 0$. First suppose the Wigner string is of length $2$, that is, $l=2$. We may without loss of generality assume them in the starting position. So we for any $\pi\in \Pi^*(w)$ with above property we have $$(\pi(0),\pi(1))=\begin{cases}
                                                                                                                                                                                                                                                                                                                                                        (\pi(1),\pi(2))\\
(\pi(2),\pi(1)).                                                                                                                                                                                                                                                                                                                                                         \end{cases}$$
In the first case $\pi(0)=\pi(1)=\pi(2)$ and so $\pi(1)$ is not generating vertex and this lowers the number of generating vertices (which is not possible as $p(w)\neq 0$). Hence, the only possibility is $(\pi(0),\pi(1))=(\pi(2),\pi(1))$ and the circuit is complete for the Wigner string and so it is a pair-matched word, proving part (a). Also, as a result of the above arguments only $C2$ constraints survive, which shows $(b)$.

Now suppose the result holds for all Wigner strings of length strictly less than $l$. Consider a Wigner string of length $l$, say $w_{(1,l)}$ (we assume it to start from the first position). We break the proof into two cases I and II. In case I, we suppose that the Wigner string has a Wigner string of smaller order and use induction hypothesis and Lemma~\ref{lemma:C2} to show the result. In Case II, we assume that there is no Wigner string inside. So there is a string of letters coming from matrix $A$ after a Wigner letter. We show that this string is pair-matched and the last Wigner letter before the $l$-th position is essentially at the first position. This also implies that the string within a Wigner string do not cross a Wigner letter.
\vskip5pt

{\bf Case I:} Suppose that $w_{(1,l)}$ contains a Wigner string of length less than $l$ at the position $(p,q)$ with $1\le p<q\le l$. Since $w_{(p,q)}$ is a Wigner string, by Lemma~\ref{lemma:C2} we have,
$$p(w)=p(w_{(p,q)})p(w_{(p,q)^c})\neq 0.$$
So by induction hypothesis and the fact that both $p(w_{(p,q)})$ and $p(w_{(p,q)^c})$ are not equal to zero we have, $w_{(p,q)}$ and $w_{(p,q)^c}$ are pair-matched words and they also satisfy~\eqref{eq:red:wstring}. So $w_{(1,l)}$ is a pair-matched word, as it is made up of $w_{(p,q)}$ and $w_{(p,q)^c}$ which are pair-matched. Also from second part of Lemma~\ref{lemma:C2}, we have $w_{(1,l)}$ satisfies part $(b)$ and $(c)$.

\vskip5pt
{\bf Case II:} Suppose there is no Wigner string in the first $l$ positions.Consider
 the last Wigner letter in the first $l-1$ positions, say at
 position $j_0$. Since
 there is no Wigner string of smaller length, $\pi(j_0)$ is a generating vertex. Also, as $j_0$ is the last Wigner letter, the positions from $j_0$ to $l-1$ are all letters coming from the matrix $A$.

Now we use the structure of the matrix $A$.
\vskip5pt

{\bf Subcase II(i):} Suppose $A$ is  a Toeplitz matrix. Let  $s_i=(\pi(j_0+i)-\pi(j_0+i-1))$ with $i=1,2,\ldots, l-1-j_0$. Now consider the following equation
\begin{equation}
\label{equation:Toeplitz} s_1 + s_2 \ldots + s_{l-1-j_0}=(\pi(l-1)-\pi(j_0)).
\end{equation}
If for any $j$, $w[j]$ is the first appearance of that letter, then consider $s_j$ to be an independent variable (can be chosen freely). Then due to the Toeplitz link function, if $w[k]=w[j]$, where $k>j$, then $s_k=\pm s_j$.
 Since $(1,l)$ is a W match, $\pi(l-1)$ is either $\pi(0)$ or $\pi(1)$ and hence $\pi(l-1)$ is not a generating vertex. Note that~\eqref{equation:Toeplitz} is a constraint on the independent variables unless $s_1+\ldots+s_{l-1-j_0}=0$. If this is non-zero, this non-trivial constraint lowers the number of independent variables and hence the limit contribution will be zero, which is not possible as $p(w)\neq0$. So we must have,
$$\pi(l-1)=\pi(j_0) \qquad \text{and}\qquad j_0=1.$$ This also shows $(\pi(l),\pi(l-1))=(\pi(0),\pi(1))$ and hence $w_{(1,l)}$ is a colored word. As $s_1+\ldots+s_{l-1-j_0}=0$, all the independent variables occur twice with different signs in the left side,
since otherwise it would again mean a non-trivial relation among  them and thus would lower the order. Hence we conclude that the Toeplitz letters inside the first $l$ positions are also pair-matched. Since the $C2$ constraint is satisfied at the position $(1,l)$, part $(b)$ also holds.\vskip5pt

\textbf{Subcase II(ii):} Suppose $A$ is a Hankel matrix.
We write, $t_i=(\pi(j_0+i)+\pi(j_0+i-1))$ and consider
\begin{equation}\label{equation:Hankel}  -t_1+t_2-t_3\ldots(-1)^{l-j_0-1}t_{l-j_0-1}=(-1)^{l-j_0-1}(\pi(l-1)-\pi(j_0)).
\end{equation}
Now again as earlier, the $t_i$'s are independent variables, and so this implies that again to avoid a non-trivial constraint which would lower the order, both sides of the equation(\ref{equation:Hankel}) have to vanish, which automatically leads to the conclusion that $\pi(l-1)=\pi(j_0)=\pi(1)$.
So $j_0=1$  and again the Wigner paired string of length $l$ is pair-matched. Part $(b)$ also follows as the $C2$ constraint holds.

\textbf{Subcase II(iii):} $A$ is Symmetric or Reverse Circulant. Note that they have link functions which are quite similar to Toeplitz and Hankel respectively, the proofs are very similar to the above two cases and hence we skip them.
\end{proof}
\begin{proof}[Proof of Lemma~\ref{lemma:trace:product}]
We first show that, \begin{equation}\label{equation:suff}\E\left[\prod_{i=1}^{l}\left(\tr\frac{X_n^{k_i}}{n^{k_i/2}}-\E\left[\tr \frac{X_n^{k_i}}{{n}^{k_i/2}}\right]\right)\right]=\text{O}(\frac{1}{n}) \text{ as }n\to\infty,\end{equation}
where $\tr$ denotes the normalized trace.
To prove \eqref{equation:suff}, we see that,
\begin{equation}\label{eq:prod:show}\E\left[\prod_{i=1}^{l}\left(\tr\frac{X_n^{k_i}}{n^{k_i/2}}-\E\left[\tr \frac{X_n^{k_i}}{{n}^{k_i/2}}\right]\right)\right]=\frac{1}{n^{\sum_{i=1}^l k_i/2+l}}\sum_{\pi_1,\pi_2,..\pi_l}\E[(\prod_{j=1}^{l}(X_{\pi_i}-\E(X_{\pi_i})))].\end{equation}
If the circuit $\pi_i$ is not jointly matched with the other circuits then $\E{X_{\pi_i}}=0$ and
$$\E[(\prod_{j=1}^{l}(X_{\pi_i}-\E(X_{\pi_i})))]= \E[X_{\pi_i}(\prod_{j \neq i}(X_{\pi_i}-\E(X_{\pi_i})))]=0.$$
 If any of the circuits is self matched i.e. it has no cross matched edge then
 $$\E[(\prod_{j=1}^{l}(X_{\pi_i}-\E(X_{\pi_i})))]= \E[X_{\pi_i}-\E(X_{\pi_i})]\E[(\prod_{j \neq i}(X_{\pi_i}-\E(X_{\pi_i})))]=0.$$
Now total number of circuits  $\{\pi_1,\pi_2,\ldots \pi_l\}$ where each edge appears at least twice and one edge at least thrice is $\leq C n^{\sum_{i=1}^l{k_i/2}+l-1}$, by Property B. Hence using Assumption~\eqref{A2} such terms in~\eqref{eq:prod:show}  are of the order $\text{O}(\frac1n)$. Now consider rest of terms where all the edges appear exactly twice. As a consequence $\sum_{i=1}^lk_i$ is even. Also number of partitions of $\frac12\sum_{i=1}^l{k_i}$ into $l$ circuits is  independent of $n$.
We need to consider only $\{\pi_1,\pi_2,\ldots \pi_l\}$ which are jointly matched but not self matched.

If we  prove that for such a partition the number of circuits is less than  $Cn^{\sum_{i=1}^l{k_i}+l-1}$ we are done since the number of such partitions is independent of $n$ and~\eqref{eq:boundedexp}.

Since $\pi_1$ is not self matched we can without loss of generality assume that the edge value for $(\pi(0),\pi(1))$ occurs exactly once in $\pi_1$. So construct $\pi_1$ as follows. First choose  $\pi_1(0)=\pi_1(k_{1})$ and then choose the remaining vertices in the order ${\pi_1(k_{1}),\pi_1(k_{1}-1)\ldots\pi_1(1)}$. One sees that we loose one degree of freedom as in this way the edge $(\pi(0),\pi(1))$ is determined and we cannot choose it arbitrarily.

The result now follows from~\eqref{equation:suff} by using induction.  For $l=2$ expanding and using the fact that expected normalized trace of the powers of $X_n/\sqrt n$ converges we get,
\begin{align*}
&  \E\left[\prod_{i=1}^{2}\left(\tr\frac{X_n^{k_i}}{n^{k_i/2}}-\E\left[\tr \frac{X_n^{k_i}}{{n}^{k_i/2}}\right]\right)\right]&\\
&= \E\left[\left(\tr\frac{X_n^{k_1}}{n^{k_1/2}}-\E\left[\tr \frac{X_n^{k_1}}{{n}^{k_1/2}}\right]\right)\left(\tr\frac{X_n^{k_2}}{n^{k_2/2}}-\E\left[\tr \frac{X_n^{k_2}}{{n}^{k_2/2}}\right]\right)\right]\\
&=\E\left[\tr \frac{X_n^{k_1}}{ n^{k_1/2}}\tr \frac{X_n^{k_2}}{n^{k_2/2}}\right]-\E\left[\tr\frac{X_n^{k_1}}{n^{k_1}}\right]\E\left[\tr\frac{X_n^{k_2}}{n^{k_2}}\right]\to0 \text{ as }n\to\infty.\\
\end{align*}
So the result holds for $l=2$. Now suppose it is true for all $2\le m< l$. We expand $$\lim_{n\rightarrow\infty} \E[\prod_{i=1}^{l}(\tr((\frac{X_n}{\sqrt n})^{k_i})-\E(\tr((\frac{X_n}{\sqrt n})^{k_i})))]=0$$ to get

$$\lim_{n\rightarrow\infty}\sum_{m=1}^l(-1)^m\sum_{i_1<i_2\ldots<i_m}\E[\prod_{j=1}^m \tr({(\frac{X_n}{\sqrt n})^{k_{i_j}}})]\prod_{i\notin\{i_1,i_2,\ldots  i_m\}}\E[\tr((\frac{X_n}{\sqrt n})^{k_i})]=0.$$

Now using the result for products of smaller order successively,

$$\lim_{n\rightarrow\infty}(-1)^l \E[\prod_{j=1}^l \tr({(\frac{X_n}{\sqrt n})^{k_{j}}})]=
\lim_{n\rightarrow\infty}\sum_{m <l}(-1)^m\sum_{i_1<i_2\ldots <i_m}\E[\prod_{j=1}^m \tr({(\frac{X_n}{\sqrt n})^{k_{i_j}}})]\prod_{i \notin \{i_1,i_2,\ldots  i_m\}}\E[\tr((\frac{X_n}{\sqrt n})^{k_i})].$$
Now  every term in right side is by induction hypothesis $\lim_{n\rightarrow\infty}\prod_{i=1}^l\E[\tr((\frac{X_n}{\sqrt n})^{k_i})]$ and from this the Lemma follows.
\end{proof}
\begin{proof}[Proof Remark~\ref{rem:gue:free}]
We just briefly sketch the arguments as the proof is quite similar to the previous section but much easier.
Note that if $W$ is centered GUE with variance $1/n$ then,
\begin{equation}\label{eq:var:gue}
\E[W_{ij}W_{kl}]=\frac1n \delta_{il}\delta_{jk}.
\end{equation}
 This equation~\eqref{eq:var:gue} provides the $C2$ constraint in the proof of Theorem~\ref{theorem:free1}. So following the steps in the proof of Theorem~\ref{theorem:free1} we have
 \begin{align*}
&\lim_{n\rightarrow\infty}\frac1{n^{k+1}}\E[\Tr(WA^{q(1)}WA^{q(2)}\ldots WA^{q(m)})]\nonumber&\\
&=\lim_{n\rightarrow\infty}\frac1{n^{k+1}}\sum_{\sigma\in \mathcal P_2(m)}\sum_{\substack{i(1),i(2),\ldots i(m)\\j(1),j(2),\ldots j(m)=1}}^n\prod_{(r,s)\in \sigma}\delta_{i(r)j(s)}\delta_{i(s)j(r)}\E[a^{q(1)}_{j(1)i(2)}\ldots a^{q(m)}_{j(m)i(1)}]\label{eq:line4}\\
&=\sum_{\sigma\in NC_2(m)}\lim_{n\rightarrow\infty}\E\left(\tr_{\sigma\gamma}[A^{(q_1)},A^{(q_2)},\ldots ,A^{(q_m)})]\right). \nonumber
\end{align*}
Now the result follows by applying Lemma~\ref{lemma:trace:product} which holds under Property B and existence of LSD.
\end{proof}
\section*{Acknowledgement}
The authors  thank  Wlodek Bryc,  Alice Guionnet, Koushik Saha and Roland Speicher for helpful comments and for providing some important references. They are also grateful to the Referee for his/her comments which have led to a much better presentation.


\begin{thebibliography}{11}
\providecommand{\natexlab}[1]{#1}
\providecommand{\url}[1]{\texttt{#1}}
\expandafter\ifx\csname urlstyle\endcsname\relax
  \providecommand{\doi}[1]{doi: #1}\else
  \providecommand{\doi}{doi: \begingroup \urlstyle{rm}\Url}\fi


\bibitem[Anderson et~al.(2010)Anderson, Guionnet, and
  Zeitouni]{anderson:guionnet:zeitouni}
G.~Anderson, A.~Guionnet, and O.~Zeitouni.
\newblock \emph{An introduction to random matrices}.
\newblock Cambridge University Press, 2010.

\bibitem[Bai and Silverstein(2006)]{Baisilverstein}
 Bai, Z. D. and Silverstein, J.
\newblock \emph{Spectral Analysis of Large Dimensional Random Matrices.}
\newblock Science Press, Beijing, 2006.


\bibitem[Banerjee and Bose(2011)]{bana:bose}
S.~Banerjee and A.~Bose.
\newblock Noncrossing partitions, Catalan words and the semicircular law.
\newblock \emph{To appear in J. Theoret. Probab.}, 2011.
\newblock URL \url{http://dx.doi.org/10.1007/s10959-011-0365-4}

\bibitem[Benaych-Georges et~al.(2011)Benaych-Georges, Guionnet, and
  Maida]{BGM:2011}
F.~Benaych-Georges, A.~Guionnet, and M.~Maida.
\newblock Fluctuations of the extreme eigenvalues of finite rank deformations
  of random matrices.
\newblock \emph{Electron. J. Probab.}, 16:\penalty0 no. 60, 1621--1662, 2011.
\newblock ISSN 1083-6489.
\newblock \doi{10.1214/EJP.v16-929}.
\newblock URL \url{http://dx.doi.org/10.1214/EJP.v16-929}.

\bibitem[Biane(1997)]{biane}
P.~Biane.
\newblock On the free convolution with a semi-circular distribution.
\newblock \emph{Indiana Univ. Math. J.,} 46:\penalty0.3, 705--718, 1997.

\bibitem[Bose and Sen(2008)]{bosesen}
A.~Bose and A.~Sen.
\newblock Another look at the moment method for large dimensional random
  matrices.
\newblock \emph{Electron. J. Probab.}, 13:\penalty0 no. 21, 588--628, 2008.
\newblock ISSN 1083-6489.

\bibitem[Bose and Sen(2011)]{bosesensan}
A.~Bose and S.~Sen.
\newblock Finite diagonal random matrices.
\newblock To appear in \emph{J. Theoret. Probab.}, 2011.
\newblock URL \url{http://dx.doi.org/10.1007/s10959-011-0378-z}



\bibitem[Bose et~al.(2010)Bose, Subhra~Hazra, and Saha]{bosehazrasaha}
A.~Bose, R.~S.~Hazra, and K.~Saha.
\newblock Convergence of joint moments for independent random patterned
  matrices.
\newblock \emph{Ann. Probab.},Vol. 39, No. 4, 1607--1620, 2011.
\newblock URL \url{http://dx.doi.org/10.1214/10-AOP597}

\bibitem[Bose et~al.(2011)Bose, Subhra~Hazra, and Saha]{bhs:half}
A.~Bose, R.~S.~Hazra, and K.~Saha.
\newblock Half independence and half cumulants.
\newblock \emph{Elect. Comm. Probab.}, Vol. 16, 405--422, 2011.



\bibitem[Bryc et~al.(2006)Bryc, Dembo, and Jiang]{bry}
W.~Bryc, A.~Dembo, and T.~Jiang.
\newblock Spectral measure of large random {H}ankel, {M}arkov and {T}oeplitz
  matrices.
\newblock \emph{Ann. Probab.}, 34\penalty0 (1):\penalty0 1--38, 2006.
\newblock ISSN 0091-1798.
\newblock \doi{10.1214/009117905000000495}.
\newblock URL \url{http://dx.doi.org/10.1214/009117905000000495}.

\bibitem[Capitaine et~al.(2009)Capitaine, Donati-Martin, and
  F{\'e}ral]{CDF:2009}
M.~Capitaine, C.~Donati-Martin, and D.~F{\'e}ral.
\newblock The largest eigenvalues of finite rank deformation of large {W}igner
  matrices: convergence and nonuniversality of the fluctuations.
\newblock \emph{Ann. Probab.}, 37\penalty0 (1):\penalty0 1--47, 2009.
\newblock ISSN 0091-1798.
\newblock \doi{10.1214/08-AOP394}.
\newblock URL \url{http://dx.doi.org/10.1214/08-AOP394}.

\bibitem[Capitaine et~al.(2011)Capitaine, Donati-Martin, F{\'e}ral, and
  F{\'e}vrier]{CDF:2011}
M.~Capitaine, C.~Donati-Martin, D.~F{\'e}ral, and M.~F{\'e}vrier.
\newblock Free convolution with a semicircular distribution and eigenvalues of
  spiked deformations of {W}igner matrices.
\newblock \emph{Electron. J. Probab.}, 16:\penalty0 no. 64, 1750--1792, 2011.
\newblock ISSN 1083-6489.
\newblock \doi{10.1214/EJP.v16-934}.
\newblock URL \url{http://dx.doi.org/10.1214/EJP.v16-934}.

\bibitem[Capitaine and Casalis(2004)]{Capitaine:Casalis:2004}
M.~Capitaine and M.~Casalis.
\newblock Asymptotic freeness by generalized moments for {G}aussian and
  {W}ishart matrices. {A}pplication to beta random matrices.
\newblock \emph{Indiana Univ. Math. J.}, 53\penalty0 (2):\penalty0 397--431,
  2004.
\newblock ISSN 0022-2518.
\newblock \doi{10.1512/iumj.2004.53.2325}.
\newblock URL \url{http://dx.doi.org/10.1512/iumj.2004.53.2325}.

\bibitem[Capitaine and Donati-Martin(2007)]{Capitaine:Martin:2007}
M.~Capitaine and C.~Donati-Martin.
\newblock Strong asymptotic freeness for {W}igner and {W}ishart matrices.
\newblock \emph{Indiana Univ. Math. J.}, 56\penalty0 (2):\penalty0 767--803,
  2007.
\newblock ISSN 0022-2518.
\newblock \doi{10.1512/iumj.2007.56.2886}.
\newblock URL \url{http://dx.doi.org/10.1512/iumj.2007.56.2886}.

\bibitem[Collins(2003)]{Collins}
B.~Collins.
\newblock Moments and cumulants of polynomial random variables on unitary
  groups, the {I}tzykson-{Z}uber integral, and free probability.
\newblock \emph{Int. Math. Res. Not.}, \penalty0 (17):\penalty0 953--982, 2003.
\newblock ISSN 1073-7928.
\newblock \doi{10.1155/S107379280320917X}.
\newblock URL \url{http://dx.doi.org/10.1155/S107379280320917X}.

\bibitem[Collins and {\'S}niady(2006)]{collins:sniady:2006}
B.~Collins and P.~{\'S}niady.
\newblock Integration with respect to the {H}aar measure on unitary, orthogonal
  and symplectic group.
\newblock \emph{Comm. Math. Phys.}, 264\penalty0 (3):\penalty0 773--795, 2006.
\newblock ISSN 0010-3616.
\newblock \doi{10.1007/s00220-006-1554-3}.
\newblock URL \url{http://dx.doi.org/10.1007/s00220-006-1554-3}.

\bibitem[Collins et~al.(2009)Collins, Guionnet, and
  Maurel-Segala]{collins:guionnet:segala:2009}
B.~Collins, A.~Guionnet, and E.~Maurel-Segala.
\newblock Asymptotics of unitary and orthogonal matrix integrals.
\newblock \emph{Adv. Math.}, 222\penalty0 (1):\penalty0 172--215, 2009.
\newblock ISSN 0001-8708.
\newblock \doi{10.1016/j.aim.2009.03.019}.
\newblock URL \url{http://dx.doi.org/10.1016/j.aim.2009.03.019}.

\bibitem[Couillet et~al.(2011)]{jack:mimo}
R.~Couillet and M.~ Debbah and J.~Silverstein.
\newblock A deterministic equivalent for the analysis of correlated MIMO multiple access channels.
\newblock {\em IEEE Trans. Inform. Theory} 57, no. 6, 3493–3514, 2011.


\bibitem[Dykema(1993)]{dykema:1993}
K.~Dykema.
\newblock On certain free product factors via an extended matrix model.
\newblock \emph{J. Funct. Anal.}, 112\penalty0 (1):\penalty0 31--60, 1993.
\newblock ISSN 0022-1236.
\newblock \doi{10.1006/jfan.1993.1025}.
\newblock URL \url{http://dx.doi.org/10.1006/jfan.1993.1025}.

\bibitem[F{\'e}ral and P{\'e}ch{\'e}(2007)]{peche:2007}
D.~F{\'e}ral and S.~P{\'e}ch{\'e}.
\newblock The largest eigenvalue of rank one deformation of large {W}igner
  matrices.
\newblock \emph{Comm. Math. Phys.}, 272\penalty0 (1):\penalty0 185--228, 2007.
\newblock ISSN 0010-3616.
\newblock \doi{10.1007/s00220-007-0209-3}.
\newblock URL \url{http://dx.doi.org/10.1007/s00220-007-0209-3}.


\bibitem[Fulton(1998)]{fulton}
W.~Fulton.
\newblock Eigenvalues of sums of {H}ermitian matrices (after {A}. {K}lyachko).
\newblock \emph{Ast\'erisque}, \penalty0 (252):\penalty0 Exp.\ No.\ 845, 5,
  255--269, 1998.
\newblock ISSN 0303-1179.
\newblock S{\'e}minaire Bourbaki. Vol. 1997/98.

\bibitem[Hammond and Miller(2005)]{hammond:miller}
C.~Hammond and S.~J. Miller.
\newblock Distribution of eigenvalues for the ensemble of real symmetric
  {T}oeplitz matrices.
\newblock \emph{J. Theoret. Probab.}, 18\penalty0 (3):\penalty0 537--566, 2005.
\newblock ISSN 0894-9840.
\newblock \doi{10.1007/s10959-005-3518-5}.
\newblock URL \url{http://dx.doi.org/10.1007/s10959-005-3518-5}.


\bibitem[Hiai and Petz(2000{\natexlab{a}})]{hiai:petz:2000}
F.~Hiai and D.~Petz.
\newblock Asymptotic freeness almost everywhere for random matrices.
\newblock \emph{Acta Sci. Math. (Szeged)}, 66\penalty0 (3-4):\penalty0
  809--834, 2000{\natexlab{a}}.
\newblock ISSN 0001-6969.

\bibitem[Hiai and Petz(2000{\natexlab{b}})]{hiai:petz:book}
F.~Hiai and D.~Petz.
\newblock \emph{The semicircle law, free random variables and entropy},
  volume~77 of \emph{Mathematical Surveys and Monographs}.
\newblock American Mathematical Society, Providence, RI, 2000{\natexlab{b}}.
\newblock ISBN 0-8218-2081-8.

\bibitem[Male (2012)]{male}
C.~Male.
\newblock The norm of polynomials in large random and deterministic matrices.
\newblock To appear in Prob. Theory Related Fields. 2012.

\bibitem[Meckes(2007)]{meckes:2007}
M.~W. Meckes.
\newblock On the spectral norm of a random {T}oeplitz matrix.
\newblock \emph{Electron. Comm. Probab.}, 12:\penalty0 315--325 (electronic),
  2007.
\newblock ISSN 1083-589X.

\bibitem[Nica and Speicher(2006)]{spei}
A.~Nica and R.~Speicher.
\newblock \emph{Lectures on the combinatorics of free probability}, volume 335
  of \emph{London Mathematical Society Lecture Note Series}.
\newblock Cambridge University Press, Cambridge, 2006.
\newblock ISBN 978-0-521-85852-6; 0-521-85852-6.
\newblock \doi{10.1017/CBO9780511735127}.
\newblock URL \url{http://dx.doi.org/10.1017/CBO9780511735127}.

\bibitem[Oraby(2007)]{Oraby}
T.~Oraby.
\newblock The spectral laws of {H}ermitian block-matrices with large random
  blocks.
\newblock \emph{Electron. Comm. Probab.}, 12:\penalty0 465--476, 2007.
\newblock ISSN 1083-589X.

\bibitem[P{\'e}ch{\'e}(2006)]{peche}
S.~P{\'e}ch{\'e}.
\newblock The largest eigenvalue of small rank perturbations of {H}ermitian
  random matrices.
\newblock \emph{Probab. Theory Related Fields}, 134\penalty0 (1):\penalty0
  127--173, 2006.
\newblock ISSN 0178-8051.
\newblock \doi{10.1007/s00440-005-0466-z}.
\newblock URL \url{http://dx.doi.org/10.1007/s00440-005-0466-z}.


\bibitem[Pastur and Vasilchuk(2000)]{pastur:vasilchuk}
L.~Pastur and V.~Vasilchuk.
\newblock On the law of addition of random matrices.
\newblock \emph{Comm. Math. Phys.}, 214\penalty0 (2):\penalty0 249--286, 2000.
\newblock ISSN 0010-3616.
\newblock \doi{10.1007/s002200000264}.
\newblock URL \url{http://dx.doi.org/10.1007/s002200000264}.

\bibitem[Rashidi~Far et~al.(2008)Rashidi~Far, Oraby, Bryc, and
  Speicher]{oraby2}
R.~Rashidi~Far, T.~Oraby, W.~Bryc, and R.~Speicher.
\newblock On slow-fading {MIMO} systems with nonseparable correlation.
\newblock \emph{IEEE Trans. Inform. Theory}, 54\penalty0 (2):\penalty0
  544--553, 2008.
\newblock ISSN 0018-9448.


\bibitem[Ryan(1998)]{ryan:1998}
{\O}.~Ryan.
\newblock On the limit distributions of random matrices with independent or
  free entries.
\newblock \emph{Comm. Math. Phys.}, 193\penalty0 (3):\penalty0 595--626, 1998.
\newblock ISSN 0010-3616.
\newblock \doi{10.1007/s002200050340}.
\newblock URL \url{http://dx.doi.org/10.1007/s002200050340}.


\bibitem[Ryan and Debbah(2009)]{ryan:debbah:2009}
{\O}.~Ryan and M.~Debbah.
\newblock Asymptotic behavior of random {V}andermonde matrices with entries on
  the unit circle.
\newblock \emph{IEEE Trans. Inform. Theory}, 55\penalty0 (7):\penalty0
  3115--3147, 2009.
\newblock ISSN 0018-9448.
\newblock \doi{10.1109/TIT.2009.2021317}.
\newblock URL \url{http://dx.doi.org/10.1109/TIT.2009.2021317}.

\bibitem[Schultz(2005)]{Schultz:2005}
H.~Schultz.
\newblock Non-commutative polynomials of independent {G}aussian random
  matrices. {T}he real and symplectic cases.
\newblock \emph{Probab. Theory Related Fields}, 131\penalty0 (2):\penalty0
  261--309, 2005.
\newblock ISSN 0178-8051.
\newblock \doi{10.1007/s00440-004-0366-7}.
\newblock URL \url{http://dx.doi.org/10.1007/s00440-004-0366-7}.

\bibitem[Speicher(1993)]{speicher:1993}
R.~Speicher.
\newblock Free convolution and the random sum of matrices.
\newblock \emph{Publ. Res. Inst. Math. Sci.}, 29\penalty0 (5):\penalty0
  731--744, 1993.
\newblock ISSN 0034-5318.
\newblock \doi{10.2977/prims/1195166573}.
\newblock URL \url{http://dx.doi.org/10.2977/prims/1195166573}.

\bibitem[Speicher(1997)]{speicher1997}
R.~Speicher
\newblock On universal products.
 \newblock \emph{Fields Institute Communications,} Vol. 12, AMS, pp. 257-266,1997.

\bibitem[Speicher(2010)]{speicher:2011}
R.~Speicher.
\newblock Free probability theory.
\newblock \emph{To appear in Handbook on Random Matrix Theory}, 2010.

\bibitem[Tulino and Verd\'u(2004)]{tulino:verdu}
A.~M. Tulino and S.~Verd\'u.
\newblock {\em Random matrices and wireless communications}, volume vol. 1 of
  {\em Fundations and Trends in Communications and Information Theory}.
\newblock Now Publishers Inc., 2004.


\bibitem[Voiculescu(1986)]{voiculescu1986addition}
D.~Voiculescu.
\newblock Addition of certain noncommuting random variables.
\newblock \emph{J. Funct. Anal.}, 66\penalty0 (3):\penalty0 323--346, 1986.
\newblock ISSN 0022-1236.
\newblock \doi{10.1016/0022-1236(86)90062-5}.
\newblock URL \url{http://dx.doi.org/10.1016/0022-1236(86)90062-5}.

\bibitem[Voiculescu(1991)]{voi}
D.~Voiculescu.
\newblock Limit laws for random matrices and free products.
\newblock \emph{Invent. Math.}, 104\penalty0 (1):\penalty0 201--220, 1991.
\newblock ISSN 0020-9910.
\newblock \doi{10.1007/BF01245072}.
\newblock URL \url{http://dx.doi.org/10.1007/BF01245072}.

\bibitem[Voiculescu(1998)]{voiculescu:1998}
D.~Voiculescu.
\newblock A strengthened asymptotic freeness result for random matrices with
  applications to free entropy.
\newblock \emph{Internat. Math. Res. Notices}, \penalty0 (1):\penalty0 41--63,
  1998.
\newblock ISSN 1073-7928.
\newblock \doi{10.1155/S107379289800004X}.
\newblock URL \url{http://dx.doi.org/10.1155/S107379289800004X}.

\bibitem[Wigner(1958)]{wigner1958}
E.~P.~ Wigner (1958).
 \newblock On the distribution of the roots of certain symmetric matrices.
 \newblock \emph{Ann. of Math.}, (2), 67, 325--327.

\end{thebibliography}

\end{document}